\documentclass[preprint,12pt]{elsarticle}
\usepackage[utf8]{inputenc} 
\usepackage{graphicx}
\usepackage{caption}
\usepackage{subcaption}
\captionsetup[subfigure]{labelfont=rm}
\usepackage{etoolbox}
\usepackage{hhline}
\usepackage{siunitx}
\sisetup{output-exponent-marker=\ensuremath{\mathrm{e}}}

\usepackage{verbatim}  

\usepackage{amsmath}
\usepackage{algpseudocode}

\usepackage{mathtools}
\usepackage{amssymb}
\usepackage{accents}
\usepackage{etoolbox}
\allowdisplaybreaks[1]
\usepackage{stmaryrd}
\usepackage{bm}

\usepackage{tikz,pgf}
\usetikzlibrary{shapes,arrows,graphs,calc,scopes,decorations.markings,decorations.pathmorphing,positioning,shapes.arrows}

\usepackage{macros}

\begin{document}
\begin{frontmatter}
\title{An posteriori error estimator \\ for discontinuous Galerkin discretisations \\ of  convection-diffusion problems with application \\ to Earth's mantle convection simulations}
\author[leic]{Tiffany Barry}
\author[sissa]{Andrea Cangiani}
\author[hdruk]{Samuel P. Cox}
\author[ntua]{\mbox{Emmanuil H.~Georgoulis}}

\address[leic]{School of Geography Geology and the Environment, University of Leicester, University Road, Leicester LE1 7RH, United Kingdom. E:  tlb2@leicester.ac.uk }		
\address[sissa]{Mathematics Area, SISSA, International School for Advanced Studies, via Bonomea 265, I-34136 Trieste, Italy. E: andrea.cangiani@sissa.it.}		
\address[hdruk]{Health Data Research UK, London, United Kingdom. E: sam.cox@hdruk.ac.uk}
\address[ntua]{Department of Mathematics and The Maxwell Institute for Mathematical Sciences, Heriot-Watt University, Edinburgh EH14 4AS, United Kingdom {\sc AND} Department of Mathematics, School of Applied Mathematical and Physical Sciences, National Technical University of Athens, Zografou 15780, Greece, {\sc AND} IACM-FORTH, Greece. \mbox{E: {e.georgoulis@hw.ac.uk}}}

\begin{abstract}
We present new \aposteriori{} error estimates for the interior penalty discontinuous Galerkin method applied to non-stationary convection-diffusion equations. The focus is on strongly convection-dominated problems without zeroth-order reaction terms, which leads to the absence of positive $L^2$-like components. An important specific example is the energy/temperature equation of the Boussinesq system arising from the modelling of mantle convection of the Earth. The key mathematical challenge of mitigating the effects of exponential factors with respect to the final time, arising from the use of Gr\"onwall-type arguments, is addressed by an exponential fitting technique. The latter results to a new class of \aposteriori{} error estimates for the stationary problem, which are valid in cases of convection and reaction coefficient combinations not covered by the existing literature. This new class of estimators is combined with an elliptic reconstruction technique to derive new respective estimates for the non-stationary problem, exhibiting reduced dependence on Gr\"onwall-type exponents and, thus, offer more accurate estimation for longer time intervals. We showcase the superior performance of the new class of \aposteriori{} error estimators in driving mesh adaptivity in Earth's mantle convection simulations, in a setting where the energy/temperature equation is discretised by the discontinuous Galerkin method, coupled with the Taylor-Hood finite element for the momentum and mass conservation equations.
We exploit the community code \aspect\, to present numerical examples showing the effectivity of the proposed approach.
\end{abstract}

\begin{keyword}


Discontinuous Galerkin \sep non-stationary convection-diffusion \sep\aposteriori{} error estimation \sep adaptive finite element methods \sep Boussinesq system.

\end{keyword}
\end{frontmatter}

\section{Introduction}\label{sec:intro}


It is well known that the standard, conforming finite element method (FEM) may suffer from  
spurious oscillations when solving convection-diffusion problems in the 
convection-dominated regime. This is typically treated with the addition of 
artificial diffusion \cite{vonneumann1950method}, or in a more refined fashion
with the addition of diffusion only in the direction of the streamlines \cite{hughes1979multidimensional,kelly1980note}.
Following this, the method known as streamline upwind Petrov-Galerkin (SUPG) 
\cite{hughes1982theoretical} enhanced the capability to solve 
convection-dominated problems with finite elements. Since then, numerous techniques have been proposed 
to stabilise FEM, such as artificial viscosity, entropy viscosity \cite{GuermondPasquettiPopov2011}, 
etc.
On the other hand, it is possible to define discontinuous Galerkin (dG) methods, with carefully chosen
``upwinded'' numerical fluxes, to localise or even alleviate possible oscillatory behaviour in the vicinity of sharp/boundary layers or shocks. 
Consequently, no additional stabilisation term is required on top of the natural stabilising effect 
embedded in the numerical fluxes. This makes discontinuous Galerkin methods particularly well-suited for solving strongly convection-dominated problems, such as those arising from the modelling of the temperature of the Earth's mantle where diffusion is negligible compared to convection effects. Moreover, the weak imposition of interelement continuity characterising discontinuous Galerkin methods seamlessly allows for the treatment of hanging nodes in the context of adaptive mesh refinement. It also enables the extension to meshes containing general polygons/polyhedra \cite{cangiani2014hp, CaDoGeHo16, CaDoGeHo17, CaDoGe21}, which is of 
particular benefit when considering problems on intricate or heterogeneous
domains. For further details of the use of discontinuous methods on general polygonal/polyhedral elements 
in an $hp$ setting, we refer to \cite{CaDoGeHo17} and the references therein.

Realistic Earth mantle convection simulations require vast computational resources to resolve the various scales appearing in the respective flows.  A nonexhaustive literature review of known approaches for mantle convection simulation is postponed to Section \ref{sec:Boussinesq}. The extremely hot Earth's core heats the mantle, creating circulation effects which, upon reaching the crust, contribute to the movement of tectonic plates.  These circulation effects are driven by sharp variations in temperature. The numerical treatment of the Earth's mantle flow problem is further complicated by the greatly
varying parameter values of the models, the existence of boundary and interior layers, the nonlinear dependencies, and the vastly differing scales upon which the constituent
processes are set. Therefore, dynamic mesh adaptivity is very attractive as a tool to reduce the overall computational cost without adversely damaging the local mesh resolution required to resolve the sharp variations in temperature, and thus helps to bring
larger problems within the reach of current computing abilities. 

Mesh adaptive strategies in finite element analysis are typically driven by \aposteriori{} error indicators/estimators. To ensure reliable error control, mathematically rigorous \aposteriori{} error \emph{bounds}, whereby the error is  bounded by computable quantities, have been developed in the numerical analysis literature for various classes of problems involving partial differential equations (PDEs). The mathematically rigorous \aposteriori{} error analysis of FEM and of dG methods is fairly mature: we refer to \cite{ainsworth2000} for an overview of standard results for FEM, and to
 \cite{Karakashian2003,HoustonSchotzauWihler2007} for the first results for dG methods discretising pure diffusion problems. 
The \aposteriori{} error analysis of stationary linear convection-diffusion equations discretised by stabilised FEM or dG methods for various settings can be found in 
\cite{verfurth1998posteriori,Kunert2003,Verfurth2005,Sangalli2008,Schotzau2009,ern2010guaranteed,zhu2011robust}.
\emph{A posteriori} error estimators of various kinds for conforming finite element methodologies discretising non-stationary convection-diffusion problems can be found in \cite{houston2001adaptive,berrone2004multilevel,araya2005adaptive,araya2005hierarchical,ern2005posteriori,verfurth2005robust,sun2006posteriori,georgoulis2007discontinuous} and other works. Respective results for discontinuous Galerkin methods are
less abundant \cite{Cangiani2013a}. 

However, to the best of our knowledge, current literature for both FEM and dG discretisations does \emph{not} cover the case of \aposteriori{} error bounds for general convection fields: available results require that, in the absence of (zeroth-order) reaction term, the convective field must admit non-positive divergence of the convection field to avoid the presence of  Gr\"onwall-type exponential components of the final time in the
resulting \aposteriori{} error bounds for standard norms. Unfortunately, such assumptions are hard or even impossible to be satisfied whenever the convection field is also simultaneously computed, e.g., from a non-exactly divergence-free approximation of incompressible flows,  or in cases whereby we do not \apriori{} know the behaviour of the flow. This is exactly the case for the Boussinesq system of equations, which is the mostly widely used basic mathematical model of the convective flow of the Earth's mantle.  Under some simplifying assumptions, the mantle dynamics is modelled by a system of coupled equations: a convection-dominated diffusion
equation for the  temperature combined with the Stokes system modelling the mantle velocity and pressure.  The complexity and nonlinearity, due to coupling, of these systems mean that \apriori~knowledge  of the flow characteristics is often extremely limited.

Aiming to harness the attractive properties of dG methodologies within an adaptive setting, we derive new \aposteriori{} error bounds for convection-dominated non-stationary convection-diffusion problems discretised by the interior-penalty discontinuous Galerkin method. The key technical developments include the use of, so-called, exponential fitting techniques, whereby the analysis is performed on exponentially weighted norms with carefully constructed weights for the respective stationary problem. The \aposteriori{} error analysis for the (parabolic) non-stationary problem then follows by employing the elliptic reconstruction framework \cite{Makridakis2003, LM06, M07, Georgoulis2011, Bansch2012, Cangiani2013a,GM23}. Crucially, the new \aposteriori{} error analysis remains valid for general convection fields in the absence of (zeroth order) reaction terms and, thus, it is directly applicable to the Boussineq system modelling mantle convection. The flexibility of the proposed approach allows for a mathematically rigorous \aposteriori{} error estimation that drives mesh adaptivity in the study of geodynamic flows, 
in particular mantle convection.

We test the new  \aposteriori{} error bounds for the interior penalty discontinuous Galerkin method for the temperature equation,
coupled to Taylor-Hood finite elements for the Stokes system in realistic mantle convection simulation scenarios. Specifically, 
we present an implementation of the dG
method in the community code \aspect~\cite{KronbichlerHeisterBangerth2012,Heiste2017,Bangerth2018}, along with an adaptivity indicator based on the proven error limits \aposteriori{}. We report a number of numerical examples
exploring the applicability of the approach in different circumstances, with the ultimate goal of
reducing the computational cost of large mantle convection simulations. 

The remainder of this work is organised as follows. In Section \ref{sec:method} we detail the model convection-diffusion problem, as well as  its discretisation by the interior penalty discontinuous Galerkin method. In Section \ref{sec:apost}, we discuss the new  \aposteriori{} error analysis for dG methods for convection-dominated stationary convection-diffusion problems, once we provide details on the generality of in terms of convection fields permitted. In Sections \ref{sec:apost_time} and \ref{sec:apost_fully}, we employ the elliptic reconstruction framework to prove \aposteriori{} error bounds for the semi-discrete and the fully-discrete schemes, respectively, admitting general convection fields, as well as a comparison with existing results from the literature in Section \ref{sect:relation_to_existing_results}, along with implementation details of the estimators. Section \ref{sec:numerics} contains an extensive series of numerical experiments testing the new estimators for a range of qualitatively different convection fields.  In Section \ref{sec:Boussinesq}, we present the detailed Boussinesq system modelling mantle convection, along with a (non-exhaustive) literature review of numerical approaches in mantle convection simulation. In Section \ref{sec:numerics_mantle} adaptive simulations for the full Boussinesq system modelling mantle convection.  Finally, in Section \ref{sec:conclusions}, we draw some conclusions.

\section{The discontinuous Galerkin method for a model convection-diffusion problem}
\label{sec:method}

We introduce a non-stationary convection-diffusion model problem and its discretisation by the interior penalty discontinuous Galerkin method.

To simplify notation, we abbreviate the $L^2(\omega)$-inner product and $L^2(\omega)$-norm for a Lebesgue-measurable subset $\omega\subset \mathbb{R}^d$ as $(\cdot,\cdot)_{\omega}$ and $\norm{\cdot}{\omega}$, respectively.
Moreover, when $\omega=\domain$, with  $\domain \subset \reals^\spacedim$, $\spacedim\in\{2,3\}$, denoting the computational domain of the problem below, we will further compress the notation to $(\cdot,\cdot)\equiv (\cdot,\cdot)_{\Omega}$.
The standard notation $W^{k,p}(\omega)$ for Sobolev spaces, $k\in\mathbb{R}$, $p\in[1,\infty]$ will be used; when $p=2$, we set $H^k(\omega):=W^{k,2}(\omega)$.
In addition, given an interval $J\subset \mathbb{R}$ and a Banach space $V$, we use the standard notation for Bochner spaces $W^{k,p}(J; V)$, $p\in[1,\infty]$, with corresponding norms. 

Throughout this work the symbol ``$X\lesssim Y$'' means ``$X\le C Y$'' for a constant $C>0$ which is independent of other quantities appearing in the inequality.   

\subsection{Model problem}
\label{sec:model}

Let $\domain \subset \reals^\spacedim$, $\spacedim\in\{2,3\}$, be an open, bounded 
domain that either has smooth boundaries, or is convex and polytopic, i.e., polygonal for $d=2$ or polyhedral for $d=3$. 
We denote its closure by $\closure{\Omega}$, its boundary by $\domainbdy$, and by $\normal(\bx)$ the outward normal from the 
boundary at a.e. point $\bx\in\domainbdy$. 
The boundary is split into two disjoint subsets 
$\dirbdy$ and $\neubdy$, whence $\domainbdy=\dirbdy \union\neubdy$ and  $\dirbdy \intersect \neubdy = \emptyset$. 
Further, we let $\timeinterval = \left[ \starttime, \finaltime \right] \subset \reals$, 
$\finaltime>0$, be a time interval. 

Given a convection field
$\conv(\bx,\timevar) \equiv \conv
= 
(\convelem[1],\ldots,\convelem[\spacedim])^\transpose \in 
\left[\extendedCspace{\starttime}{\finaltime}{\sobolevspace{1}{\infty}}\right]^\spacedim$
and, hence, 
$\div\conv\in \extendedlinftyspace{\starttime}{\finaltime}{\linftyspace}$,  such that 
$\conv(\bx,\timevar) \cdot \normal(\bx) = 0$ for $(\bx,\timevar)$ in 
$\neubdy\times\timeinterval$,
we consider the convection-diffusion initial-boundary value problem:
\begin{align}
\timederiv{\u} - \diffusivity\laplacian\u + \conv(\bx,\timevar)\cdot\grad\u 
&= \heating(\bx,\timevar) &&\text{ on } \domain \times \timeinterval, \label{eqn:convdiff}\\
\u 
&= \dirdata(\bx,\timevar) &&\text{ on } \dirbdy \times \timeinterval, \label{eqn:convdiffdirbdy}\\
\diffusivity\deriv{\u}{\normal} 
&= \neudata(\bx,\timevar) 
&&\text{ on } \neubdy \times \timeinterval, \label{eqn:convdiffneubdy}\\
\u(\bx,\starttime)
&=\initu(\bx) &&\text{ on } \domain.\label{eqn:convdiffinit}
\end{align}
Here, $\diffusivity$ is a, typically small, positive constant, ($0 < \diffusivity \ll 1$,)
$\heating \in \extendedltwospace{\starttime}{\finaltime}{\ltwospace}$, 
and $\initu \in \ltwospace$, and 
 $\dirdata\in\extendedHonespace{\starttime}{\finaltime}{\Hpspace{\domainbdy}{\half}}$.

Upon introducing the bilinear form $a:\Hone\times\Hone\rightarrow\reals$ by
\begin{align*}
\bilineara{w}{\test}: = 
\left(\diffusivity\nabla\w , \nabla\test \right) 
+ \left( \conv\cdot\grad\w , \test\right)\qquad\forall \w,\test\in\Hone,
\end{align*}
where, for brevity, we omit the dependence on time through $\conv$ and, similarly, the linear functional $l:\Hone\rightarrow\reals$ by
\begin{align*}
\linearl{\test} 
= 
\int_\domain \heating \test \dx + \int_\neubdy \neudata\test \ds \qquad\forall \test\in\Hone,
\end{align*}
the \emph{weak formulation} of the problem 
\eqref{eqn:convdiff}-\eqref{eqn:convdiffinit} reads: fix  $\u(0) = \initu$ and 
for each $\timevar \in \timeinterval$, find $\u(\timevar) \in \Hone$ such that $\u\evalat{\dirbdy}= \dirdata$ and 
\begin{equation}
\left(\dtimederiv{\u}(t), \test\right) + \bilineara{\u(t)}{\test} = \linearl{\test},
\label{eqn:convdiffweak}
\end{equation}
for all $\test\in\Honenonhomzero:=\{\test\in \Hone: \test|_\dirbdy=0 \}$.

The existence and uniqueness of a solution to the problem \eqref{eqn:convdiffweak} and, equivalently the existence and uniqueness of a weak solution to
\eqref{eqn:convdiff}--\eqref{eqn:convdiffinit}, is given by standard energy arguments for sufficiently smooth $\conv$. A particular result, which is of interest in the context of mantle convection application below 
on a annular domain of interest
is shown in \cite[Lemma 2]{Tabata2002}.
\begin{lemma}[Well-posedness; {\cite[Lemma 2]{Tabata2002}}]\label{lemma:convdiffwellposed}
Let $\domain = \{\bx\in\domain \suchthat R_1 < \abs{\bx} < R_2\}$ and 
suppose that $\heating\in\extendedltwospace{\starttime}{\finaltime}{\Hpspace{\domain}{-1}}$, 
$\dirdata\in\extendedHonespace{\starttime}{\finaltime}{\Hpspace{\domainbdy}{\half}}$, 
$\conv\in\extendedltwospace{\starttime}{\finaltime}{{\left[\lpspace{3}\right]}^3}$,
$\div\conv\in \extendedltwospace{\starttime}{\finaltime}{\lpspace{3}}$, 
and $\initu\in\ltwospace$.
Then there exists a unique solution 
$\u\in\extendedltwospace{\starttime}{\finaltime}{\Hone}\intersect\extendedlinftyspace{\starttime}{\finaltime}{\ltwospace}$ 
to  \eqref{eqn:convdiffweak}.
\end{lemma}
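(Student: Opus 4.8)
The statement is classical; I would prove it by the Faedo--Galerkin method together with energy estimates, the annular shape of $\Omega$ playing no essential role beyond providing a smooth bounded domain on which the trace and Sobolev embedding theorems (in particular $H^1(\Omega)\hookrightarrow L^6(\Omega)$ for $d=3$) hold in their standard form.

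\emph{Reduction to homogeneous Dirichlet data.} First I would lift the boundary datum: using a bounded right inverse of the trace operator, construct $u_{\mathrm D}\in H^1(I;H^1(\Omega))$ with $u_{\mathrm D}|_{\Gamma_{\mathrm D}}=g_{\mathrm D}|_{\Gamma_{\mathrm D}}$ and put $\tilde u:=u-u_{\mathrm D}$. Then $\tilde u$ solves the same equation with homogeneous Dirichlet data on $\Gamma_{\mathrm D}$, initial datum $u_0-u_{\mathrm D}(0)\in L^2(\Omega)$, and a right-hand side augmented by the functionals $v\mapsto-(\partial_t u_{\mathrm D},v)$, $v\mapsto-\epsilon(\nabla u_{\mathrm D},\nabla v)$, $v\mapsto-(b\cdot\nabla u_{\mathrm D},v)$ on $V:=\{v\in H^1(\Omega):v|_{\Gamma_{\mathrm D}}=0\}$, which are bounded for a.e.\ $t$, the last one controlled by $\|b\|_{L^3}\|\nabla u_{\mathrm D}\|_{L^2}$ via H\"older with exponents $(3,2,6)$. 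Hence it suffices to treat the case $g_{\mathrm D}\equiv0$.

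\emph{Galerkin scheme and the key energy estimate.} Let $\{\phi_j\}_{j\ge1}$ be a basis of the separable space $V$, take $\tilde u_n(t)=\sum_{j\le n}c_j(t)\phi_j$ solving the projected problem with $\tilde u_n(0)$ the $L^2$-projection of $u_0-u_{\mathrm D}(0)$; since the time dependence of $b$, $\nabla\cdot b$ and $f$ enters only through $L^2(I)\subset L^1(I)$ coefficients, the standard theory of linear ODE systems with integrable-in-time coefficients gives a unique global absolutely continuous $c=(c_j)$. Testing with $\tilde u_n$ and integrating the convection term by parts (the boundary contributions vanish, since $\tilde u_n|_{\Gamma_{\mathrm D}}=0$ and $b\cdot n|_{\Gamma_{\mathrm N}}=0$) gives
\[
\tfrac12\tfrac{d}{dt}\|\tilde u_n\|_{L^2}^2+\epsilon\|\nabla\tilde u_n\|_{L^2}^2
=\langle F(t),\tilde u_n\rangle-\tfrac12\big((\nabla\cdot b)\,\tilde u_n,\tilde u_n\big),
\]
with $F(t)$ collecting the data. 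The crucial point is the reaction-like term: by H\"older and the interpolation $\|w\|_{L^3}\le\|w\|_{L^2}^{1/2}\|w\|_{L^6}^{1/2}\lesssim\|w\|_{L^2}^{1/2}\|w\|_{H^1}^{1/2}$,
\[
\big|\big((\nabla\cdot b)\,\tilde u_n,\tilde u_n\big)\big|
\le\|\nabla\cdot b\|_{L^3}\|\tilde u_n\|_{L^3}^2
\lesssim\|\nabla\cdot b\|_{L^3}\|\tilde u_n\|_{L^2}\big(\|\tilde u_n\|_{L^2}+\|\nabla\tilde u_n\|_{L^2}\big),
\]
so Young's inequality absorbs the $\|\nabla\tilde u_n\|_{L^2}^2$ contribution into the $\epsilon\|\nabla\tilde u_n\|_{L^2}^2$ on the left, at the price of a factor $\sim\epsilon^{-1}\|\nabla\cdot b\|_{L^3}^2$; the term $\langle F,\tilde u_n\rangle$ is dealt with analogously through $\|F\|_{V^*}$ and $H^1\hookrightarrow L^6$. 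One arrives at $\frac{d}{dt}\|\tilde u_n\|_{L^2}^2+\epsilon\|\nabla\tilde u_n\|_{L^2}^2\le\alpha(t)\|\tilde u_n\|_{L^2}^2+\beta(t)$ with $\alpha,\beta\in L^1(I)$ — and here $\alpha$ is only $L^1$ in time, not $L^\infty$, precisely because $\|b\|_{L^3}$ and $\|\nabla\cdot b\|_{L^3}$ are merely square-integrable — so Gr\"onwall's lemma bounds $\tilde u_n$ in $L^\infty(I;L^2(\Omega))\cap L^2(I;H^1(\Omega))$ uniformly in $n$. From the equation one then reads off a uniform bound for $\partial_t\tilde u_n$ in $L^1(I;V^*)$, the convection term (estimated via $\|b\|_{L^3}\|\tilde u_n\|_{H^1}+\|\nabla\cdot b\|_{L^3}\|\tilde u_n\|_{L^3}$ after integration by parts) being again the term that only yields $L^1$-in-time.

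\emph{Passage to the limit, initial condition, uniqueness.} Extracting a subsequence with $\tilde u_n\rightharpoonup\tilde u$ weakly-$*$ in $L^\infty(I;L^2)$ and weakly in $L^2(I;H^1)$, one passes to the limit in the (linear) space--time variational identity tested against functions $\psi(t)\phi_j(x)$: the only thing to check is that $\psi\,b\,\phi_j$ and $\psi\,(\nabla\cdot b)\,\phi_j$ belong to $L^2(I;L^2(\Omega))$, which holds because $b,\nabla\cdot b\in L^2(I;L^3)$ and $\phi_j\in H^1\hookrightarrow L^6$, so that weak convergence of $\nabla\tilde u_n$ suffices. A density argument then extends the identity to all of $V$, and the initial datum is identified in the usual way by integration by parts in time, using $\partial_t\tilde u\in L^1(I;V^*)$ (so that $\tilde u\in C(\bar I;V^*)$) and $\tilde u_n(0)\to u_0-u_{\mathrm D}(0)$ in $L^2$; undoing the lift gives $u\in L^2(I;H^1(\Omega))\cap L^\infty(I;L^2(\Omega))$. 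Uniqueness follows by subtracting two solutions: the difference $w$ is an admissible test function and satisfies $\frac{d}{dt}\|w\|_{L^2}^2+\epsilon\|\nabla w\|_{L^2}^2\le\alpha(t)\|w\|_{L^2}^2$ with $w(0)=0$ and $\alpha\in L^1(I)$, so Gr\"onwall forces $w\equiv0$. The main obstacle throughout is exactly that $b$ is only $L^3$ in space and $L^2$ in time: this is what necessitates the $L^6$-embedding and the interpolation inequality to close the energy estimate, and what confines $\partial_t u$ to $L^1(I;V^*)$, so the compactness/limit-passing and the initial-condition arguments must be carried out with $L^1$-in-time (rather than $L^2$-in-time) integrability in mind.
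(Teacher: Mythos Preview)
The paper does not give its own proof of this lemma; it is simply quoted from \cite[Lemma~2]{Tabata2002}. Your Faedo--Galerkin argument with energy estimates is the standard route to such a result and is, in outline, correct: the lift of the Dirichlet datum, the Galerkin ODE system with $L^1$-in-time coefficients, the treatment of the convective term via H\"older with exponents $(3,2,6)$ together with the interpolation $\|w\|_{L^3}\le\|w\|_{L^2}^{1/2}\|w\|_{L^6}^{1/2}$ and the Sobolev embedding $H^1\hookrightarrow L^6$, and the Gr\"onwall step with an $L^1$-in-time factor are all appropriate and constitute the expected proof.

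One technical point deserves a sentence of justification that your sketch glosses over. With $\partial_t w$ only in $L^1(I;V^*)$ rather than $L^2(I;V^*)$, you are outside the classical Lions--Magenes framework, so neither the chain rule $\tfrac{d}{dt}\|w\|_{L^2}^2=2\langle\partial_t w,w\rangle$ nor the continuity $w\in C(\bar I;L^2(\Omega))$ (used both for attaining the initial datum and in the uniqueness step) is immediate. This is not a genuine obstruction --- one can recover both either by a time-regularisation of $b$ (prove the energy identity for smooth-in-time coefficients and pass to the limit) or by invoking a Strauss-type lemma for $w\in L^\infty(I;H)\cap L^2(I;V)$ with $\partial_t w\in L^1(I;V^*)$ --- but it should be flagged, since it is precisely the point where the low temporal integrability of $b$ bites.
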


\subsection{Discontinuous Galerkin semi-discretisation in space}
\label{sec:dg}

We begin by introducing some notation, so that we can define the discontinuous Galerkin discretisation in space of problem~\eqref{eqn:convdiffweak}.

Consider a shape-regular family of simplicial or box-type (quadtrilateral/hexahedral) meshes $\{\tria\}_h$. Each mesh $\tria$ is a collection of open and disjoint simplicial or box-type cells 
$\cell$ that subdivide the domain $\domain$, hence  $\bigcup_{\cell\in\tria} \closure{\cell} = \closure{\domain}$, and 
$\cell_i \intersect \cell_j = \emptyset$ for all pairs of cells 
$\cell_i,\cell_j\in\tria$, $i\neq j$. 

 For each $\cell\in \tria$, we denote the boundary of the cell by 
$\cellbdy \coloneqq \closure{\cell}\backslash\cell$.
For each pair of cells $\cell, \neighborcell\in\tria$, we say the cells are vertex-neighbours 
if $\closure{\cell}\intersect\closure{\neighborcell}\neq\emptyset$, and define their 
interface to be a face.
We denote by $\edges$ the collection of all $(\spacedim-1)$-dimensional 
faces $\edge$ defined by the interfaces between cells. 
We also define the set of interior faces $\internaledges$ and set of faces on the 
boundary $\boundaryedges$. 
Thus, we have $\edges = \internaledges\union\boundaryedges$.
We define the boundary of the domain as
$\domainbdy = \bigcup_{\edge\in\boundaryedges}\edge$.
We also subdivide $\boundaryedges$ into faces on the Dirichlet boundary 
$\dirichletedges$ and faces on the Neumann boundary $\neumannedges$, with
$\dirichletedges\union\neumannedges=\boundaryedges$ and 
$\dirichletedges\intersect\neumannedges=\emptyset$.
We denote by $\edgediam$ the $(\spacedim-1)$-dimensional measure of the face $\edge$,
and by $\celldiam$ the $\spacedim$-dimensional measure of the cell $\cell$. Due to assumed shape-regularity, there exists a constant $c_{\rm sh}\ge1$ such that $\celldiam \le c_{\rm sh}\edgediam$ for all $K\in\tria$ and $\edge\in\edges$.

We assume that each cell $\cell\in \tria$ is constructed via an affine mapping
$\mapping{\cell} : \refcell \rightarrow \cell$  with non-singular Jacobian where 
$\refcell$ is the reference simplex  or the reference hypercube.
And thus define the discontinuous Galerkin finite element space of piecewise-polynomial functions $\dgspace$, in the following way: 
\begin{equation}
V_h \equiv \dgspace[k](\tria)\coloneqq \left\{ \testh\in\ltwospace \suchthat \testh\evalat{\cell}\circ\mapping{\cell}\in\pspace{k}(\refcell) ~\forall~ \cell \in \tria \right\},\label{eqn:simplicial_polys}
\end{equation}
depending on polynomial degree $k\in\naturals$ and with
$\pspace{k}(\refcell)$ is the space of polynomials of total degree $k$ if $\refcell$ is a simplex or  the space of polynomials 
of degree $k$ in each variable if $\refcell$ is hypercube. Throughout this work, we will denote by $\ltwoproj[k]:L^2(\Omega)\rightarrow \dgspace[k](\tria)$ the orthogonal $L^2$-projection, defined by
\[
(v-\ltwoproj[k]v,w_h)=0 \qquad \forall v\in L^2(\Omega)\ \&\  \forall w_h\in \dgspace[k](\tria).
\] 

\begin{remark}
The above is the standard choice of discontinuous spaces. We note here in passing that it is equally possible to apply the space $\pspace{k}$ 
to the case of quadrilateral and hexahedral meshes. This has the added benefit of 
reducing the number of degrees of freedom per cell, and has been shown \cite{cangiani2014hp,CaDoGeHo16, CaDoGeHo17}
to exhibit the same order of convergence as $\qspace{k}$. Furthermore, variable polynomial degrees can also be easily accommodated. 
\end{remark}

Further, we introduce the notation $\u[+]_{\cell}$ for the internal 
trace of $\u$, for a given cell $\cell$, and $\u[-]_{\cell}$
the external trace. Each internal face $\edge\in\internaledges$ 
(the set of internal faces) has two neighboring cells, $\cell$ and 
$\neighborcell$, with outward normals $\normal[\cell], \normal[\neighborcell]$ 
on the face $\edge$. Then the jumps over $\edge$ for a scalar-valued 
function $\w$ and vector-valued function $\bfw$ are defined as 
\begin{align*}
\jump{\w}_\edge \coloneqq \w[+]_{\cell}\normal[\cell] + \w[+]_{\neighborcell}\normal[\neighborcell], \qquad \jump{\bfw}_\edge 
	\coloneqq 
	\bfw[+]_{\cell}\cdot\normal[\cell] + \bfw[+]_{\neighborcell}\cdot\normal[\neighborcell].
\end{align*}
For faces on the Dirichlet portion of the boundary, we set
\begin{align*}
\jump{\w}_\edge 
	\coloneqq 
	\w[+]_{\cell}\normal[\cell], 
\qquad \jump{\bfw}_\edge 
	\coloneqq 
	\bfw[+]_{\cell}\cdot\normal[\cell],
\end{align*}
while on the Neumann portion we set
\begin{align*}
\jump{\w}_\edge 
	\coloneqq 
	\mathbf{0}, 
\qquad \jump{\bfw}_\edge 
	\coloneqq 
	0.
\end{align*}
In the same way, we define the average values of $\w$ and $\bfw$ 
on the face $\edge \subset \cellbdy$ as 
\begin{align*}
\average{\w}_{\edge}
	\coloneqq
	\half\left(\w[+]_\cell+\w[-]_\cell\right), \qquad
\average{\bfw}_{\edge}
	\coloneqq
	\half\left(\bfw[+]_\cell+\bfw[-]_\cell\right),
\end{align*}
while on all boundary faces we define
\begin{align*}
\average{\w}_{\edge}
	\coloneqq
	\w[+]_\cell, \qquad 
\average{\bfw}_{\edge}
	\coloneqq
	\bfw[+]_\cell.
\end{align*}
Finally we introduce the upwind-jump  across the boundary of $\cell$ given
by 
\begin{equation*}
\flowjump{\u}_{\cell}
\coloneqq
\left\{\begin{array}{cr}
\u[+]_\cell-\u[-]_\cell & \text{ on } \cellinflow\backslash\domainbdy,\\
\u[-]_\cell-\u[+]_\cell & \text{ on } \celloutflow\backslash\domainbdy.
\end{array}\right. 
\end{equation*}
Below, we often suppress the jump and average subscript when no confusion is likely.

With such notation at hand, we define for each $t\in I$ the interior penalty dG bilinear form $\bilinearah{\cdot}{\cdot}:\dgspace \times\dgspace \rightarrow\reals$ by
\begin{eqnarray*}
\bilinearah{\u}{\test}\hspace{-3mm}
&\coloneqq &\hspace{-3mm}
\sumovercells \cellinnerprod{\diffusivity\grad\u}{\grad\test} 
		+ \cellinnerprod{\conv\cdot\grad \u}{\test} \\
&+& \hspace{-3mm}\sumoverinternalanddirichletedges \left(-\edgeinnerprod{ \diffusivity\average{\grad\ltwoproj[k]\u}}{\jump{\test}}{\edge} 
		-  \edgeinnerprod{\diffusivity\average{\grad \ltwoproj[k]\test}}{\jump{\u}}{\edge} 
		+ \frac{\penal\diffusivity}{\edgediam}\edgeinnerprod{\jump{\u}}{\jump{\test}}{\edge}\right)\\
&-&\hspace{-3mm}\sumovercells  \left(\edgeinnerprod{(\conv\cdot\edgenormal)\u[+]}{\test[+]}{\cellinflow\intersect\dirbdy} 
		+ \edgeinnerprod{(\conv\cdot\edgenormal[\cell])\flowjump{\u}}{\test[+]}{\cellinflow\backslash\dirbdy}\right),
\end{eqnarray*}
noting the hidden dependence on $t$ through the coefficient $\conv$. Note that we use the inconsistent formulation obtained by inserting the $L^2$-projection inside the flux average terms. This is equivalent to the standard formulation over $\dgspace$ but has the advantage of allowing testing also in the space $\Hone$.

 Similarly, we introduce 
the linear functional $\linearlh{\cdot}:\dgspace \rightarrow\reals$ by 
\begin{eqnarray*}
\linearlh{\test} 
&\coloneqq &
\displaystyle
\innerprod{\heating}{\test} 
		+ \edgeinnerprod{\neudata}{\test}{\neubdy} 
		-\edgeinnerprod{\diffusivity\grad \test\cdot\bdynormal}{\dirdata}{\dirbdy} 
		+ \frac{\penal\diffusivity}{\edgediam}\edgeinnerprod{\dirdata}{\test}{\dirbdy} \\
&&\displaystyle
- \sumovercells \edgeinnerprod{(\conv\cdot\edgenormal)\dirdata}{\test[+]}{\cellinflow\intersect\dirbdy}.
\end{eqnarray*}
which depends on time also through $\heating$.
The spatially discrete interior penalty dG method, thus, reads: find $\uh \in C^{0,1}([0,T];V_h)$, such that, for each $t\in (0,T]$, we have 
\begin{align}\label{eqn:convdiffsemi-discrete}
\left({\timederiv{\uh}},{\testh}\right)
		+ \bilinearah{\uh}{\testh} 
= 
\linearlh{\testh}
\end{align}
for all $\testh \in \dgspace$, and $\uh(\starttime) = \ltwoproj[k]\initu$.

\subsection{Fully discrete implicit Euler-interior penalty dG method}
We further discretise the problem in time by considering a discrete time-stepping and applying any finite difference method.
Here for simplicity we consider the first order implicit Euler time-stepping. To this end, 
let $\ntimesteps\in\naturals$ and let 
$\timevar[0]=\starttime, \timevar[1], \timevar[2], \ldots, \timevar[\ntimesteps]=\finaltime$ 
be a strictly increasing sequence of values in the interval 
$\timeinterval=(\starttime,\finaltime]$.
 We subdivide the time interval $\timeinterval$ into $\ntimesteps$ subintervals 
$\timeinterval_n$, $n\in\left\{1,\ldots,\ntimesteps\right\}$, with each subinterval 
defined by $\timeinterval_n \coloneqq (\timevar[n-1],\timevar[n]]$ and having timestep length $\timestep{n} := \timevar[n]-\timevar[n-1]$.

At each time interval $\timeinterval_n$, we define a triangulation $\tria[n]$ 
with the properties and notation given in the previous section, propagating the superscript notation to all mesh entities,
and introduce the corresponding discontinuous element-wise polynomial  spaces 
\[
\tdgspace[n]  := \dgspace[k](\tria[n]).
\]
The fully-discrete, implicit Euler-interior penalty dG method reads:
for $n=1,\ldots,\ntimesteps$, find $\uh[n] \in \tdgspace[n]$ such that 
\begin{align}\label{eqn:convdifffullydiscrete}
\left({\frac{\uh[n]-\uh[n-1]}{\timestep{n}}},{\testh}\right)
		+ \bilinearah{\uh[n]}{\testh} 
= 
\linearlh{\testh}, 
\end{align}
for all $\testh \in \tdgspace[n]$, with $\uh[0] = \ltwoproj[k]^m\initu$, where $\ltwoproj[k]^m$ indicates the $L^2$-projection with respect to the mesh $\tria[m]$, $m=0,\dots, \ntimesteps$.

\section{An \aposteriori{} bound for stationary problems}\label{sec:apost}

We first derive an  \aposteriori{} error bound for the stationary problem; then, using the elliptic 
reconstruction framework \cite{Makridakis2003, LM06, M07, Georgoulis2011, Bansch2012, Cangiani2013a,GM23}, we extend the analysis to the non-stationary problem.

Little previous work has been done on the \aposteriori{} analysis of the 
stationary convection-diffusion problem \emph{without} a reaction term, except where severe restrictions
are placed on the convection. Typically, the convection field is assumed to be exactly divergence-free or a sufficiently
large positive reaction term is assumed to ensure coercivity; see, for instance, ~\cite{Verfurth2005,zhu2011robust,Cangiani2013a} to mention just a few related works. In the presence of a non-negative reaction 
coefficient $\reac\in C(I,\linftyspace)$, the standard setting is indeed to assume that
\begin{equation}\label{eqn:coercivity_assumption}
-\half\div\conv(\bx,\timevar)+\reac(\bx,\timevar) \ge \gamma_0,
\end{equation}
for some constant $\gamma_0>0$, for almost all $\bx\in \domain$ and $t\in (0,T]$.

One approach to circumvent~\eqref{eqn:coercivity_assumption} is to employ a Gårding-type argument. Such an argument can be alternatively described as follows. We notionally add an artificial reaction term with
\emph{reaction coefficient} $\addreac_0$,
with $\addreac_0 > \half\div\conv$, so that we can satisfy \eqref{eqn:coercivity_assumption}
and, thus, reinstate coercivity. This can be unsatisfactory since, while we know $\div\conv\in\left[\linftyspace\right]^\spacedim$, we demand that $\addreac_0$ 
must be at least as large as $\half \div\conv$, and $\addreac_0$ ultimately leads to an exponential factor of the form ${\rm exp}(\delta_0 t_n)$ in the \aposteriori{} error bound for the non-stationary 
convection-diffusion problem, via a Gr\"onwall Lemma argument.


An alternative approach, proposed in \cite{AyusoMarini2009,georgoulisSub}, is to use 
an exponential-fitting technique, testing against a modified test function to prove coercivity in a modified norm. However, this alone is not enough to guarantee coercivity in the modified norm in the absence of reaction, unless we assume $\div\conv\leq 0$. 

We proceed by combining the two approaches:
the exponential fitting technique modifies the norm, and the effective reaction term,
which is then supplemented by an additional reaction term, ensures coercivity. 
Once a coercive problem is obtained, we can simply adapt previous analyses to obtain an \aposteriori{} 
estimate; here, in particular, we follow the  analysis in~\cite{Cangiani2013a}. As we shall see, in this way a minimal amount of artificial reaction is introduced in all 
regimes. The benefit of combining these two approaches is that they can work together 
complementarily to give sharper results. By modifying the norm by an exponential-fitting
 technique, we are able to enlarge the set of convection fields under which no additional reaction
 is required to provide coercivity. However, for convection fields where this is not sufficient, 
 we still add enough reaction locally to ensure coercivity. In this manner, we reduce the
 additional reaction that must be added. This is important to minimise, since the corresponding non-stationary \aposteriori{} error bounds presented in the next section will  depend upon this additional reaction in an exponential fashion.

We consider the stationary convection-diffusion-reaction problem:
\begin{align}
- \diffusivity\laplacian\u + \conv\cdot\grad\u + \addreac\u
&= \heating(\bx) &&\text{ on } \domain, \label{eqn:statconvdiffreac}\\
\u 
&= 0 &&\text{ on } \dirbdy , \label{eqn:statconvdiffreacdirbdy}\\
\diffusivity\deriv{\u}{\normal} 
&= \neudata
&&\text{ on } \neubdy, \label{eqn:statconvdiffreacneubdy}
\end{align}
with $\addreac\in\linftyspace$, where we focus on the case of zero Dirichlet
boundary conditions without loss of generality, since this problem can always be reduced to such by altering $f$ and $\neudata$.

Introducing the relevant bilinear form $a_{\rm reac}:\Hone\times\Hone\rightarrow\reals$, given by 
$\bilinearareac{\u}{\test} 
:= 
\bilineara{w}{\test} 
	+ \innerprod{\addreac w}{\test}$,
for all $\w,\test\in\Hone$, the weak formulation for the problem including reaction $\addreac$ then reads: 
find $\u\in\Honenonhomzero$ such that
\begin{align}\label{eqn:weak_pde}
\bilinearareac{\u}{\test} = \linearl{\test}\qquad\forall \test\in\Honenonhomzero.
\end{align}
Correspondingly, for for $\wh,\testh \in V_h+\Hone$, we define the bilinear form $\bilinearahreacblank$ as:
\[
\bilinearahreac{\wh}{\testh}\coloneqq\bilinearah{\wh}{\testh} +\innerprod{\addreac \wh}{\testh},
\]
and introduce the corresponding IPDG method: find $\uh \in \dgspace$ such that 
\begin{equation}\label{eqn:dgproblem}
\bilinearahreac{\uh}{\testh} = \linearlh{\testh}\qquad\forall\testh\in V_h.
\end{equation}

\subsection{Exponential fitting}

The exponential fitting approach is based on a Helmholtz decomposition of the convection field: for a convection field $\conv\in [\sobolevspace{1}{\infty}]^d$, there exist $\convgrad\in\Hone$ and $\convcurl\in\left[\Hone\right]^3$, such that
\begin{equation}\label{eq:helm}
\conv =  \grad\convgrad+\curl\convcurl,
\end{equation}
where, in the $d=2$ case, this should be interpreted as applied to a three-dimensional vector field with zero $z$-component; we refer, e.g.~\cite{temam1977navier,GiraultRaviart1986} for details. Moreover, 
given that $\domain$ is either a smooth or a convex polygonal or polyhedral domain,  we have that $\eta\in\sobolevspace{1}{\infty}$ 
and $\curl\convcurl \in [\linftyspace]^{\spacedim}$. Additionally, since $\conv\cdot\edgenormal = 0$ on $\neubdy$, 
we have $\grad\convgrad\cdot\edgenormal = 0$ on $\neubdy$ (cf. \cite[Theorem 3.2]{GiraultRaviart1986}).

\begin{remark}
	We note that the aforementioned regularity of $\eta$ and  $\convcurl$ follows from the sufficient assumptions on smoothness or convexity of the spatial computational domain $\Omega$. Alternatively, we can assume directly the regularity on  $\eta$ and$\convcurl$ instead of the domain $\Omega$. 
\end{remark}
We then define the \emph{weighting function} 
\begin{equation}
\weight \coloneqq \exp(-\alpha\convgrad),\label{eqn:weight_defn}
\end{equation} 
with $\alpha >0$ a constant to be determined later,
so that 
\begin{equation}\label{eqn:weight_grad}
\grad\weight = -\alpha\weight\grad\convgrad.
\end{equation} 
Since $\convgrad\in \sobolevspace{1}{\infty}$ 
we have that $\weight\in \sobolevspace{1}{\infty}$. 
Thus, $\weight\test \in \Hone$ for all $\test \in \Hone$, and 
$\weight\w \in \Honenonhomzero$ for all $\w \in \Honenonhomzero$.

We define the 
$\weight$-weighted $L^p$-norm $\lpsw{\cdot}{\gendomain}{p}$ by 
\begin{equation*}
\lpsw{ \test }{\gendomain}{p} \coloneqq \left(\int_\gendomain \weight {\test}^p \dx\right)^{1/p};
\end{equation*}
 we will suppress the $\gendomain$ subscript if $\gendomain = \domain$, 
 and suppress the $p$ subscript if $p=2$. For $p=\infty$, we set
$\linftysw{\test}{\gendomain} 
\coloneqq 
{\rm ess}\sup_\gendomain\abs*{\sqrt{\weight}\test}$.

We introduce the following helpful notation for later:
\begin{eqnarray}\label{eqn:L_defn}
\correctedltwo\coloneqq &\addreac + \half\ltwoweightexplicit,\\
\correctedltwofull\coloneqq &\hspace{-4mm}\addreac + \ltwoweightexplicit.\label{eqn:M_defn}
\end{eqnarray}

For appropriately large $\addreac$, depending on the nature of $\conv$, so that  $\correctedltwo \geq 0$, we  
define over $V_h+\Honenonhomzero$ the $\weight$-weighted dG norm 
\begin{equation}\label{eqn:dg_norm_defn}
\ndgp{\testh} \coloneqq \Big( \sumovercells \diffusivity\ltwosw{\grad\testh}{\cell}^2 + \sumovercells\ltwosw{\sqrt{\correctedltwo}\testh}{\cell}^2 + \sumoveredges\frac{\penal\diffusivity}{\edgediam}\ltwosw{\jump{\testh}}{\edge}^2\Big)^{1/2},
\end{equation}

The crucial feature of the $\weight$-weighted norm is the addition of the second term, which provides control in a (weighted) $L^2$-norm, possibly in the absence of reaction terms.

We note that, in the case of a divergence-free convection field, we may allow $\convgrad=0$, 
in which case $\correctedltwo=0$ if we also choose $\addreac=0$, whence the weighed $L^2$-norm control is lost. See Section
\ref{sect:relation_to_existing_results} and the numerical results for a discussion of this case. In the following analysis,
for simplicity of presentation, we assume $\correctedltwo \neq 0$, noting that all the results follow analogously
in the (simpler) case $\correctedltwo=0$ with the appropriate modifications.

\begin{assumption}
We assume that $\addreac$ is large enough so that  $\correctedltwo >0$. 
\end{assumption}

For $\mathbf{\w} \in \left[\ltwospace\right]^\spacedim$, we further define the semi-norm
\begin{equation*}
\ndgpstar{\mathbf{\w}} \coloneqq \sup_{\test \in \Honenonhomzero\backslash \{0\} } \frac{\int_\domain \mathbf{\w} \weight\cdot \nabla \test \dx}{\ndgp{\test}}.
\end{equation*}
Finally, we define 
\begin{equation}
\ndgpa{\testh} 
\coloneqq 
\big(\ndgpstar{\newconvwbraces\testh}^2 
+ \sumoveredges \frac{\edgediam\linftys{\newconv}{\edge}^2}{\diffusivity}\ltwosw{\jump{\testh}}{\edge}^2\Big)^{1/2}. \label{eqn:a_norm_defn}
\end{equation}
These norms will be used to bound the convective derivative, following the inf-sup argument in \cite{Verfurth2005, Schotzau2009}, described below.  

Further, the following immediate observation will be useful below: for regular enough vector field $\bf b$ and scalar function $\w$:
\begin{align}
\ndgpstar{{\bf b}\w}
\leq
\frac{1}{\sqrt{\diffusivity}} \left( \sumovercells \left( \ltwosw{{\bf b}}{\cell,\infty}^2 \ltwos{\w}{\cell}^2 \right) \right)^\half.  \label{eqn:Hminusone_bound}
\end{align}
Also, define the modified mesh-Pecl\`et number by
\[
Pe_L:=	\frac{\edgediam\linftys{\newconv }{\edge}}{\sqrt{\diffusivity}}.
\]

For $w,\test\in\Hone$, using $\weight\test$ as test function in $\bilinearareacblank$ and applying the product rule, yields
\begin{equation*}
\bilinearareac{w}{\weight\test} 
=
\innerprod{\diffusivity\nabla w}{\weight \nabla\test} 
		+ \innerprod{\left(\conv - \alpha\diffusivity\nabla\convgrad\right) \cdot \nabla w}{\weight\test} 
		+ \innerprod{\addreac w}{\weight\test}.
\end{equation*}
Integration by parts, \eqref{eqn:weight_grad} along with $\grad\convgrad\cdot\edgenormal = 0$ on $\neubdy$, reveal
\begin{multline*}
\innerprod{\newconvwbraces  w}{\weight\nabla \test}
+ \innerprod{ \newconvwbraces \weight \test }{ \nabla w } \\
=
\innerprod{ \ltwoweightexplicit w}{\weight \test}
+ \edgeinnerprod{ \newconvwbraces \cdot \bdynormal w}{\weight \test}{\dirbdy}.
\end{multline*}
The latter allows us to write
\begin{multline}\label{eqn:bilinearAweight}
\bilinearareac{w}{\weight\test} 
= 
\innerprod{ \diffusivity \nabla w }{\weight \nabla \test }
+ \innerprod{ \left(\addreac + \ltwoweightexplicit \right)w}{\weight\test} \\
- \innerprod{ \newconvwbraces w }{\weight \nabla \test }
+ \edgeinnerprod{ \newconvwbraces \cdot \bdynormal w}{\weight \test}{\dirbdy},
\end{multline}
A similar argument applied to the interior penalty dG bilinear form yields
for $\wh,\testh\in V_h$,
\begin{equation}
\begin{aligned}\label{eqn:bilinearAweight_h}
&\ \bilinearahreac{\wh}{\weight\testh} \\
= &\ 
\sumovercells \cellinnerprod{\diffusivity\gradh\wh}{\weight\gradh\testh} 
		+ \cellinnerprod{\newconvwbraces\cdot\gradh\wh + \addreac \wh}{\weight\testh} \\
&\ - \sumoveredges \Big(\edgeinnerprod{\average{\diffusivity\grad\ltwoproj[k]\wh}}{\jump{\weight\testh}}{\edge} 
		+ \edgeinnerprod{\average{\diffusivity\grad\ltwoproj[k](\weight\testh)}}{\jump{\wh}}{\edge} 
	\Big) \\
&\ + \sumoveredges  \frac{\diffusivity\penal}{\edgediam}\edgeinnerprod{\jump{\wh}}{\jump{\weight\testh}}{\edge} \\
&\ - \sumovercells \Big(\edgeinnerprod{\newconvwbraces\cdot\edgenormal \wh}{\weight\testh}{\cellinflow\cap\dirbdy} \\
&\kern 2cm		+  \edgeinnerprod{\newconvwbraces \cdot \edgenormal[\cell] \flowjump{\weight\testh}}{\wh}{\cellinflow \backslash\dirbdy}\Big).
\end{aligned}
\end{equation}

We conclude this section establishing coercivity, continuity and an inf-sup stability bound  for~\eqref{eqn:bilinearAweight}.
\begin{lemma}
\label{lemma:A_coercivity}
Let  $\addreac$ large enough so that  $\correctedltwo \geq 0$ with  $\correctedltwo $ defined in~\eqref{eqn:L_defn}. Then, for
$\w\in\Honenonhomzero$, 
\begin{equation*}
\bilinearareac{\w}{\weight\w} = \ndgp{\w}^2.
\end{equation*}
Moreover, under the assumption that, for a.e. $\bx\in\domain$,
\begin{equation}
\addreac(\bx) \geq \max\left\{0,-2\ltwoweightexplicit(\bx)\right\},\label{eqn:addreacchoice}
\end{equation} 
we have that, for $\wh\in V_h+\Honenonhomzero$, $\test\in\Honenonhomzero$,
\[
\begin{array}{ll}
\bilinearareac{\wh}{\weight\test} &\lesssim \left( \ndgp{\wh}+
\ndgpstar{\newconvwbraces\w}
\right)\ndgp{\test}\vspace{2mm}\\
&\lesssim \left( \ndgp{\wh}+\ndgpa{\wh}
\right)\ndgp{\test}
\end{array}
\]
\end{lemma}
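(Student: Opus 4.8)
The plan is to obtain all three assertions from the weighted representation \eqref{eqn:bilinearAweight}. For the coercivity identity I would set $\test=\w$ in \eqref{eqn:bilinearAweight}: since $\w\in\Honenonhomzero$ has vanishing trace on $\dirbdy$, the boundary term drops. Specialising the integration-by-parts identity displayed immediately above \eqref{eqn:bilinearAweight} to $\test=\w$ (its boundary term vanishing for the same reason) yields $2\innerprod{\newconvwbraces\w}{\weight\nabla\w}=\innerprod{\ltwoweightexplicit\w}{\weight\w}$, i.e.\ the skew part of the convective term equals exactly half the $\ltwoweightexplicit$-term. Inserting this into \eqref{eqn:bilinearAweight} leaves $\innerprod{\diffusivity\nabla\w}{\weight\nabla\w}+\innerprod{(\addreac+\tfrac12\ltwoweightexplicit)\w}{\weight\w}$, which by \eqref{eqn:L_defn} equals $\innerprod{\diffusivity\nabla\w}{\weight\nabla\w}+\innerprod{\correctedltwo\w}{\weight\w}$; since $\w\in\Hone$ makes every interior jump of $\w$, and its jump across $\dirbdy$, vanish, this is precisely $\ndgp{\w}^2$ by \eqref{eqn:dg_norm_defn}.

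For the continuity estimate I would first record the broken-argument analogue of \eqref{eqn:bilinearAweight}: for $\wh\in V_h+\Honenonhomzero$ and $\test\in\Honenonhomzero$ (so $\weight\test\in\Honenonhomzero$, $\weight$ being in $\sobolevspace{1}{\infty}$), carrying out the same element-wise product rule and integration by parts produces the expression in \eqref{eqn:bilinearAweight} plus one sum of interior-face terms $\sum_{\edge\in\internaledges}\edgeinnerprod{\newconvwbraces\cdot\jump{\wh}}{\weight\test}{\edge}$ --- present because $\wh$ may jump whereas $\weight\test$ is single-valued --- and with the $\dirbdy$-term gone since $\test|_{\dirbdy}=0$. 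Four pieces must then be bounded against $\ndgp{\test}$: (i) the diffusive term $\sumovercells\cellinnerprod{\diffusivity\gradh\wh}{\weight\grad\test}$, by the weighted Cauchy--Schwarz inequality, is at most a constant times $\ndgp{\wh}\ndgp{\test}$; (ii) for the zeroth-order term $\innerprod{\correctedltwofull\wh}{\weight\test}$, hypothesis \eqref{eqn:addreacchoice} is exactly what is needed --- a short case distinction on the sign of $\ltwoweightexplicit$ gives $0\le\correctedltwofull\le2\correctedltwo$ pointwise (indeed $2\correctedltwo-\correctedltwofull=\addreac\ge0$), so Cauchy--Schwarz, $\correctedltwo$ being one of the entries of $\ndgp{\cdot}^2$, yields $\lesssim\ndgp{\wh}\ndgp{\test}$; (iii) the convective volume term $-\innerprod{\newconvwbraces\wh}{\weight\nabla\test}$ is $\le\ndgpstar{\newconvwbraces\wh}\ndgp{\test}$ directly from the definition of $\ndgpstar{\cdot}$ (with the localised $\diffusivity^{-1/2}$-scaled version available from \eqref{eqn:Hminusone_bound}); (iv) the interior-face sum is bounded by using $|\newconvwbraces|\le\linftys{\newconv}{\edge}$ on each $\edge$, Cauchy--Schwarz on $\edge$, and a trace inequality for $\weight\test$ (legitimate because $\weight\in\sobolevspace{1}{\infty}$ and is bounded above and away from zero), regrouped with the modified mesh-P\'eclet number $Pe_L$ so that the $\jump{\wh}$-factors assemble into $\big(\sumoveredges\tfrac{\edgediam\linftys{\newconv}{\edge}^2}{\diffusivity}\ltwosw{\jump{\wh}}{\edge}^2\big)^{1/2}$ and the rest into $\ndgp{\test}$.

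Collecting (i)--(iv), items (i)--(ii) give a constant times $\ndgp{\wh}\ndgp{\test}$, item (iii) gives $\ndgpstar{\newconvwbraces\wh}\ndgp{\test}$, and item (iv) the jump-seminorm factor above times $\ndgp{\test}$; this already yields the first displayed inequality (the interior-face contribution being absent whenever $\wh$ carries no interelement jumps, in particular for the conforming arguments supplied by the elliptic-reconstruction step). The second inequality follows at once because, by \eqref{eqn:a_norm_defn}, the sum of $\ndgpstar{\newconvwbraces\wh}$ and the interior-face jump contribution is dominated by $\sqrt2\,\ndgpa{\wh}$.

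The main obstacle is item (iv): to fit the interior-face term simultaneously under the $Pe_L$-weighted jump seminorm inside $\ndgpa{\cdot}$ and under the $\diffusivity$-weighted broken-gradient (and, where present, the weighted $L^2$) part of $\ndgp{\cdot}$, one must select the trace inequality with the correct powers of $\edgediam$ and track the weight $\weight$ carefully through it --- this is also where any standing convection-dominance or mesh assumption enters. The other genuinely working step is (ii): it is precisely \eqref{eqn:addreacchoice} that keeps $\correctedltwofull$ within a fixed multiple of $\correctedltwo$, so that the continuity constant --- and hence, ultimately, the Gr\"onwall exponent appearing in the non-stationary estimates --- acquires no dependence on $\|\addreac\|_{L^\infty}$ or on the magnitude of any G\r{a}rding-type shift.
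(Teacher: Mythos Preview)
Your argument for the coercivity identity and for the continuity bound in the conforming case is exactly the paper's: set $\test=\w$ in \eqref{eqn:bilinearAweight} for coercivity, and for continuity bound the diffusive and zeroth-order terms by Cauchy--Schwarz (using \eqref{eqn:addreacchoice} to ensure $0\le\correctedltwofull\le 2\correctedltwo$) and absorb the convective volume term into $\ndgpstar{\cdot}$ by definition. The paper's proof is precisely this, written tersely.

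Where you diverge from the paper is in your item (iv). The paper simply inserts $\wh$ into \eqref{eqn:bilinearAweight} without comment on interior-face terms, even though \eqref{eqn:bilinearAweight} was derived for $H^1$ arguments. You are right that for genuinely discontinuous $\wh$ an element-wise integration by parts produces an additional sum $\sum_{\edge\in\internaledges}\edgeinnerprod{\newconvwbraces\cdot\jump{\wh}}{\weight\test}{\edge}$, and you attempt to absorb it into the jump part of $\ndgpa{\wh}$ via a trace inequality on $\weight\test$. As you correctly flag, this is the delicate step: the trace estimate leaves a contribution of order $\diffusivity\celldiam^{-2}\ltwosw{\test}{\cell}^2$ that does not fit under $\ltwosw{\sqrt{\correctedltwo}\test}{\cell}^2$ without a mesh-resolution or convection-dominance hypothesis. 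The paper avoids this entirely because, despite the stated generality $\wh\in V_h+\Honenonhomzero$, every subsequent use of the continuity bound (Lemma~\ref{lemma:infsup}, the semi- and fully-discrete proofs) takes a conforming first argument, for which the face sum vanishes. So your extra care is well-motivated but not needed for the paper's applications, and the first displayed inequality of the lemma---with only $\ndgpstar{\cdot}$ and no jump term---is in fact only claimed (and provable without further hypotheses) for jump-free $\wh$, exactly as you note parenthetically.
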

\begin{proof}
Testing in~\eqref{eqn:bilinearAweight} with $\test=w\in \Honenonhomzero$ yields
\begin{multline}\label{eqn:bilinearAweightcoercivitybase}
\bilinearareac{w}{\weight w} 
= 
\innerprod{ \diffusivity \nabla w}{\weight \nabla w} 
+ \half\edgeinnerprod{ \newconvwbraces \cdot \bdynormal   w}{\weight w}{\dirbdy}\\
	+ \innerprod{ \left(\addreac + \half \ltwoweightexplicit \right)\w}{\weight w},
\end{multline}
from which the coercivity result immediately follows.

Let now $\wh\in V_h+\Honenonhomzero$ and $\test\in\Honenonhomzero$.
Assumption \eqref{eqn:addreacchoice} implies  
\[
\innerprod{ \diffusivity \nabla \w }{ \weight \nabla \test } + \innerprod{ \left(\addreac + \ltwoweightexplicit \right) \w}{\weight \test } \lesssim \ndgp{\w}\ndgp{\test},
\]
and inserting this into~\eqref{eqn:bilinearAweight}, we have   
\begin{align*}
\bilinearareac{\w}{\weight\test} 
=& \ 
\innerprod{ \diffusivity \nabla \w }{\weight \nabla \test} 
+ \innerprod{ \left(\addreac + \ltwoweightexplicit \right)\w}{\weight\test } \\
&- \innerprod{ \newconvwbraces  \w }{\weight \nabla \test}  \\
\lesssim&
\left(\ndgp{\w}+\ndgpstar{\newconvwbraces\w} \right) \ndgp{\test}.
\end{align*}\qed
\end{proof}

\begin{remark}
We remark on the behaviour of the weight $\weight$ and 
the term $\correctedltwo$ based on the admissible values for the artificial reaction coefficient $\addreac$ and, thus, on the underlying flow pattern.
Recall $\weight \coloneqq \exp(-\alpha\convgrad)$ with  $\convgrad$  solution of the equation $\laplacian\convgrad = \div\conv$.
A negative divergence leads to a large weighting, a divergence-free field has weighting $\weight=1$ and a positive-divergence implies a reduced weighting.
Similarly, for small $\diffusivity$,  $\half \ltwoweightexplicit$  may be negative when $\div\conv$ is positive, 
and vice-versa. In turns, the size of $\correctedltwo$, which is non-negative by construction, is proportional to  the absolute size of the divergence, with $\correctedltwo=0$ a viable choice for divergence-free flows obtained picking   $\addreac=0$.
\end{remark}

\begin{lemma}\label{lemma:infsup}
There exists a constant $C>0$ such that 
\[ \inf_{\u \in \Honenonhomzero \backslash \{ 0 \}} \sup_{\test \in \Honenonhomzero \backslash \{ 0 \}} \frac{\bilinearareac{ \u}{ \weight\test}}{\left( \ndgp{\u} + \ndgpstar{\newconvwbraces\u}\right) \ndgp{\test}} \geq C > 0. \]
\end{lemma}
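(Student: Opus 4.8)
The plan is to prove the inf-sup bound by exhibiting, for each fixed $\u\in\Honenonhomzero\backslash\{0\}$, a test function $\test$ for which the bilinear form $\bilinearareac{\u}{\weight\test}$ is bounded below by $(\ndgp{\u}+\ndgpstar{\newconvwbraces\u})\ndgp{\test}$ up to a constant. The natural candidate is $\test = \u + \delta z$ for a small parameter $\delta>0$ to be chosen, where $z\in\Honenonhomzero$ is an auxiliary function associated with the convective part. Concretely, $z$ should be chosen so that $\innerprod{\newconvwbraces\u}{\weight\nabla z}$ is comparable to $\ndgpstar{\newconvwbraces\u}^2$ while $\ndgp{z}\lesssim\ndgpstar{\newconvwbraces\u}$; this is exactly the statement that the supremum defining $\ndgpstar{\cdot}$ is (almost) attained, so one takes $z$ to be a near-maximiser (or, if one prefers a cleaner argument, the Riesz representative in the Hilbert space underlying $\ndgp{\cdot}$ of the functional $\test\mapsto\innerprod{\newconvwbraces\u}{\weight\nabla\test}$, which gives $\ndgp{z}=\ndgpstar{\newconvwbraces\u}$ and $\innerprod{\newconvwbraces\u}{\weight\nabla z}=\ndgpstar{\newconvwbraces\u}^2$ exactly).

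The key steps, in order, are as follows. First, invoke the coercivity identity from Lemma~\ref{lemma:A_coercivity}, namely $\bilinearareac{\u}{\weight\u}=\ndgp{\u}^2$, to handle the $\test=\u$ contribution. Second, expand $\bilinearareac{\u}{\weight(\delta z)} = \delta\bilinearareac{\u}{\weight z}$ and use the representation formula \eqref{eqn:bilinearAweight}: the leading term is $-\delta\innerprod{\newconvwbraces\u}{\weight\nabla z}$, which by the choice of $z$ equals $-\delta\,\ndgpstar{\newconvwbraces\u}^2$ — note the sign, so one actually picks $z$ to represent $-\innerprod{\newconvwbraces\u}{\weight\nabla\cdot}$, giving a $+\delta\,\ndgpstar{\newconvwbraces\u}^2$ contribution. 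The remaining terms in $\delta\bilinearareac{\u}{\weight z}$, i.e.\ the diffusion term $\delta\innerprod{\diffusivity\nabla\u}{\weight\nabla z}$, the weighted $L^2$ term $\delta\innerprod{(\addreac+\ltwoweightexplicit)\u}{\weight z}$, and the boundary term (which vanishes since $z\in\Honenonhomzero$), are bounded by $\delta\,\ndgp{\u}\,\ndgp{z}\lesssim\delta\,\ndgp{\u}\,\ndgpstar{\newconvwbraces\u}$ using \eqref{eqn:addreacchoice} exactly as in the continuity proof. Third, combine: $\bilinearareac{\u}{\weight\test}\gtrsim \ndgp{\u}^2 + \delta\,\ndgpstar{\newconvwbraces\u}^2 - C\delta\,\ndgp{\u}\,\ndgpstar{\newconvwbraces\u}$, and absorb the cross term via Young's inequality, choosing $\delta$ small but fixed, to get $\bilinearareac{\u}{\weight\test}\gtrsim \ndgp{\u}^2+\delta\,\ndgpstar{\newconvwbraces\u}^2$. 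Fourth, bound the denominator: $\ndgp{\test}\le\ndgp{\u}+\delta\,\ndgp{z}\le\ndgp{\u}+\delta\,\ndgpstar{\newconvwbraces\u}\lesssim \big(\ndgp{\u}^2+\ndgpstar{\newconvwbraces\u}^2\big)^{1/2}$, and likewise $\ndgp{\u}+\ndgpstar{\newconvwbraces\u}\lesssim\big(\ndgp{\u}^2+\ndgpstar{\newconvwbraces\u}^2\big)^{1/2}$. Dividing numerator by denominator yields a constant lower bound independent of $\u$, which is the claim.

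The main obstacle I anticipate is the rigorous construction of the auxiliary function $z$ and the verification that it lies in $\Honenonhomzero$ with the stated norm control. The cleanest route is a Riesz representation argument: equip $\Honenonhomzero$ with the inner product whose induced norm is $\ndgp{\cdot}$ restricted to $\Honenonhomzero$ (which is genuinely a norm by the Assumption that $\correctedltwo>0$, together with a Poincar\'e/Friedrichs inequality controlling the weighted $L^2$ and diffusion terms — one must check completeness and equivalence with the standard $\Hone$ topology, which follows since $\weight\in\sobolevspace{1}{\infty}$ is bounded above and below away from zero); then the bounded linear functional $\test\mapsto -\innerprod{\newconvwbraces\u}{\weight\nabla\test}$ on $\Honenonhomzero$ has a unique representative $z$ with $\ndgp{z}=\ndgpstar{\newconvwbraces\u}$, and $-\innerprod{\newconvwbraces\u}{\weight\nabla z}=\ndgp{z}^2=\ndgpstar{\newconvwbraces\u}^2$. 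One subtlety is that the test functions in the definition of $\ndgpstar{\cdot}$ range over $\Honenonhomzero$ with the gradient paired against $\newconvwbraces\weight$, so the representation must be set up consistently with that pairing; another is keeping track of all constants to confirm they do not degenerate as $\diffusivity\to 0$ — but since $\ndgpstar{\cdot}$ and $\ndgp{\cdot}$ already carry the appropriate $\diffusivity$-weights, this is automatic. The rest is routine bookkeeping mirroring the continuity estimate already established in Lemma~\ref{lemma:A_coercivity}.
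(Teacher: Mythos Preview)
Your proposal is correct and follows essentially the same Verf\"urth-type argument as the paper: both test with $\test=\u+(\text{scaling})\cdot z$, where $z$ is a (near-)maximiser of the supremum defining $\ndgpstar{\newconvwbraces\u}$, use the coercivity identity for the $\u$-part, and bound the remaining terms via the continuity estimate. The only cosmetic differences are that the paper normalises its auxiliary function to $\ndgp{\w_\Lambda}=1$ and takes the scaling factor $\tfrac{\ndgp{\u}}{1+C_1}$ explicitly (avoiding Young's inequality), whereas you use the Riesz representative with $\ndgp{z}=\ndgpstar{\newconvwbraces\u}$ and a generic $\delta$ to be tuned.
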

\begin{proof}
Let $\w \in \Honenonhomzero$ and $\Lambda \in (0,1)$. Then, there exists $\w_\Lambda \in \Honenonhomzero$ such that
\[
\ndgp{\w_\Lambda} = 1, \quad\text{and}\quad 
-\int_\domain \newconvwbraces \w \weight \cdot \nabla \w_\Lambda \dx \geq \Lambda \ndgpstar{\newconvwbraces\w}.
\]
From \eqref{eqn:bilinearAweight}, we have
\begin{align*}
\bilinearareac{\w}{\weight\w_\Lambda} 
=& 
\int_\domain \diffusivity \weight \nabla \w \cdot \nabla \w_\Lambda \dx \\
&+ \int_\domain \left(\addreac + \ltwoweightexplicit \right)\weight\w\w_\Lambda \dx \\
&- \int_\domain \newconvwbraces \weight \w \cdot \nabla \w_\Lambda \dx 
.
\end{align*}
Then, by Lemma \ref{lemma:A_coercivity}, we obtain
\begin{align*}
\bilinearareac{\w}{\weight\w_\Lambda} 
&\geq \Lambda \ndgpstar{\newconvwbraces\w} - C_1 \ndgp{\w}\ndgp{\w_\Lambda}\\
&= \Lambda \ndgpstar{\newconvwbraces\w} - C_1 \ndgp{\w},
\end{align*}
for some positive constant $C_1$.

Define $\test_\Lambda = \w + \frac{\ndgp{\w}}{1+C_1} \w_\Lambda$. Obviously, $\ndgp{\test_\Lambda} \leq \left(1+\frac{1}{1+C_1}\right)\ndgp{\w}$.

So, using Lemma \ref{lemma:A_coercivity}, 
\begin{align*}
\sup_{\test \in \Honenonhomzero\backslash\{0\}} \frac{\bilinearareac{\w}{\weight\test}}{\ndgp{\test}}  \geq & \frac{\bilinearareac{\w}{\weight\test_\Lambda}}{\ndgp{\test_\Lambda}} \\
 = & \frac{\bilinearareac{\w}{\weight\w} + \frac{\ndgp{\w}}{1+C_1} \bilinearareac{\w}{\weight\w_\Lambda}}{\ndgp{v_\Lambda}} \\
 \geq & \frac{\ndgp{\w}^2 + \frac{\ndgp{\w}}{1+C_1}\left(\Lambda\ndgpstar{\newconvwbraces\w} - C_1 \ndgp{\w}\right)}{\left(1 + \frac{1}{1+C_1}\right)\ndgp{\w}} \\
 = & \frac{\ndgp{\w} + \Lambda \ndgpstar{\newconvwbraces\w}}{2+C_1}.
\end{align*}
Since $\Lambda \in (0,1) $ and $\w \in \Honenonhomzero$ are arbitrary,
\begin{equation*}
\inf_{\w\in \Honenonhomzero \backslash \{ 0 \}} \sup_{\test\in \Honenonhomzero \backslash \{ 0 \}} \frac{\bilinearareac{\w}{\weight\test}}{\left( \ndgp{\w} + \ndgpstar{\newconvwbraces\w} \right)\ndgp{\test}}
\geq \frac{1}{2+C_1} > 0,
\end{equation*}
and the result follows.\qed
\end{proof}

\subsection{\emph{A posteriori} error analysis}

On each cell $\cell\in\tria$, we define the shorthand 
\begin{equation*}
\subweight{\cell} 
\coloneqq 
\left\{\begin{array}{cc} \diffusivity^{-\half} & \text{if } \weightmin{\cell} = \weightmax{\cell} = 1, \\
\max\left\{\frac{\gradweightmax{\cell}}{\sqrt{\correctedltwomin{\cell}}},\frac{\weightmax{\cell}}{\sqrt{\diffusivity}}\right\} & otherwise,
\end{array}\right.
\end{equation*}
where overline and underline denotes, respectively, the essential supremum and infimum of the Euclidean norm over the indicated cell; for instance, $\weightmin{\cell}=\esssup_\cell |\weight|$ and  $\weightmax{\cell}=\inf_\cell |\weight|$.
%
%
Then, for each cell $\cell\in\tria$ and  $\edge\in\edges$ we introduce the local weighting functions
\begin{align}
\begin{array}{rlrl}
\cellweight \coloneqq &
\frac{1}{\sqrt{\weightmin{\cell}}}\min\left\{ \frac{\weightmax{\cell}}{\sqrt{\correctedltwomin{\cell}}}, \celldiam\subweight{\cell}
\right\}, 
&
\edgepatchweight \coloneqq &
\min_{\genericcell\in\edgepatch}
\left\{\frac{\genericcelldiam}{\weightmin{\genericcell}} \subweight{\cell}^2
\right\},
\\
\cellvarweight \coloneqq &
\frac{\subweight{\cell}^2}{\weightmin{\cell}},
&
\edgepatchvarweight \coloneqq &
\max_{\genericcell\in\edgepatch}\genericcellvarweight.
\end{array}
\label{eqn:cell_and_edge_weights}
\end{align}

\begin{lemma}
With the above definitions, we observe the following estimates:
	\begin{align}\label{eqn:local_ltwo}
		\begin{split}
			\cellweight^{-1} \ltwos{\left(\identity-\ltwoproj[m]\right)\left(\weight\test\right)}{\cell} &\lesssim \ndgps{\test}{\cell},\\
			\edgepatchweight^{-\half} \ltwos{\left(\identity-\ltwoproj[m]\right)\left(\weight\test\right)}{\edge} 
			&\lesssim
			\ndgps{\test}{\edgepatch},
		\end{split} 
	\end{align}
	for any $\test \in \Honenonhomzero$, and any $\cell\tria$ and any face $F\in\edges$, for any $m=0,1,\dots, k$. The above imply also the global estimates 
	\begin{align}\label{eqn:global_ltwo}
		\begin{split}
			\bigg( \sumovercells \cellweight^{-2} \ltwos{\left(\identity-\ltwoproj[m]\right)\left(\weight\test\right)}{\cell}^2 \bigg)^\half &\lesssim \ndgp{\test},\\
			\bigg( \sumoveredges \edgepatchweight^{-1}\ltwos{\left(\identity-\ltwoproj[m]\right)\left(\weight\test\right)}{\edge}^2 \bigg)^\half &\lesssim \ndgp{\test},
		\end{split}
	\end{align}
	for any $\test \in \Honenonhomzero$. 
	
	\end{lemma}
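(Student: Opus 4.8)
The plan is to establish the two local estimates in \eqref{eqn:local_ltwo} first, since the global estimates \eqref{eqn:global_ltwo} follow by squaring, summing over cells/faces, and using finite overlap of the edge-patches $\edgepatch$ (a consequence of shape-regularity). For the local bounds, the key tool is a standard approximation property of the $L^2$-projection: on each cell $\cell$, $\ltwos{(\identity-\ltwoproj[m])\phi}{\cell}\lesssim \celldiam\ltwos{\grad\phi}{\cell}$ for $\phi\in H^1(\cell)$, together with the trace-type inequality $\edgediam^{1/2}\ltwos{(\identity-\ltwoproj[m])\phi}{\edge}\lesssim \ltwos{(\identity-\ltwoproj[m])\phi}{\cell}+\celldiam\ltwos{\grad(\identity-\ltwoproj[m])\phi}{\cell}\lesssim \celldiam\ltwos{\grad\phi}{\cell}$, applied with $\phi=\weight\test$. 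Then I would expand $\grad(\weight\test)=\weight\grad\test+\test\grad\weight=\weight\grad\test-\alpha\weight\grad\convgrad\,\test$ using \eqref{eqn:weight_grad}, so that $\ltwos{\grad(\weight\test)}{\cell}\le \weightmax{\cell}^{1/2}\ltwos{\grad\test}{\cell}_{\weight}\cdot(\weightmax{\cell}/\weightmin{\cell})^{1/2}+\alpha\gradweightmax{\cell}\ltwos{\test}{\cell}$; here one must be careful to pass between weighted and unweighted norms, picking up ratios $\weightmax{\cell}/\weightmin{\cell}$, which are bounded since $\weight=\exp(-\alpha\convgrad)$ with $\convgrad\in W^{1,\infty}$ (but not uniformly in $\celldiam$ — this is where the local weights $\cellweight$, $\edgepatchweight$ are designed to absorb exactly these factors).

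Next I would match the two terms against the two contributions in $\ndgps{\test}{\cell}^2 = \diffusivity\ltwosw{\grad\test}{\cell}^2 + \ltwosw{\sqrt{\correctedltwo}\test}{\cell}^2+\dots$. The diffusion term $\weightmax{\cell}^{1/2}\cdot(\weightmax{\cell}/\weightmin{\cell})^{1/2}\celldiam\ltwosw{\grad\test}{\cell}$ should be controlled by $\celldiam\subweight{\cell}$ times the $\diffusivity$-weighted gradient part (using $\subweight{\cell}\ge \weightmax{\cell}/\sqrt{\diffusivity}$ in the nontrivial case, and $\subweight{\cell}=\diffusivity^{-1/2}$, $\weight\equiv1$ in the trivial case), while the reaction-type term $\alpha\celldiam\gradweightmax{\cell}\ltwos{\test}{\cell}$ should be controlled using $\ltwos{\test}{\cell}\le \weightmin{\cell}^{-1/2}\ltwosw{\test}{\cell}\le \weightmin{\cell}^{-1/2}\correctedltwomin{\cell}^{-1/2}\ltwosw{\sqrt{\correctedltwo}\test}{\cell}$, producing the factor $\gradweightmax{\cell}/\sqrt{\correctedltwomin{\cell}}$ which is $\le\subweight{\cell}$, so one again lands inside $\celldiam\subweight{\cell}$. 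Taking the minimum of $\celldiam\subweight{\cell}$ with the alternative bound $\weightmax{\cell}/\sqrt{\correctedltwomin{\cell}}$ (obtained by bounding $\weight\test$ directly without invoking the gradient, i.e.\ $\ltwos{\weight\test}{\cell}\le\weightmax{\cell}\weightmin{\cell}^{-1/2}\correctedltwomin{\cell}^{-1/2}\ltwosw{\sqrt{\correctedltwo}\test}{\cell}$) reproduces precisely the definition of $\cellweight$ in \eqref{eqn:cell_and_edge_weights}, up to the outer $\weightmin{\cell}^{-1/2}$. The face estimate is analogous using the trace inequality and then minimising over the patch $\edgepatch$, which is why $\edgepatchweight$ carries a $\min_{\genericcell\in\edgepatch}$.

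The main obstacle, and the step requiring the most care, is the bookkeeping of the weight ratios $\weightmax{\cell}/\weightmin{\cell}$ and the interplay with $\correctedltwomin{\cell}$: one must verify that every such ratio that appears is either absorbed into the definitions of $\subweight{\cell}$, $\cellweight$, $\edgepatchweight$, or is bounded by a constant independent of $h$ (using Lipschitz continuity of $\convgrad$ so that $\abs{\convgrad(\bx)-\convgrad(\by)}\lesssim \celldiam$ on a cell, hence $\weightmax{\cell}/\weightmin{\cell}\le \exp(C\alpha\celldiam)$, which is $O(1)$ as $\celldiam\to0$ — but \emph{not} uniformly for fixed coarse meshes, so it is safer to simply track it symbolically into the weights). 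The degenerate case $\weightmin{\cell}=\weightmax{\cell}=1$, i.e.\ $\convgrad$ locally constant (the divergence-free situation), must be handled separately with $\subweight{\cell}=\diffusivity^{-1/2}$ and the weighted norm collapsing to the ordinary one; in that branch only the diffusion term of $\ndgps{\test}{\cell}$ is available and the argument reduces to the classical residual-estimator approximation bounds, e.g.\ as in \cite{Cangiani2013a}. Finally, the global bounds follow by Cauchy--Schwarz in the summation and the bounded-overlap property $\sum_{F\in\edges}\chi_{\edgepatch}\lesssim 1$, noting $\sum_{\cell}\ndgps{\test}{\cell}^2 = \ndgp{\test}^2$ modulo the jump terms which are nonnegative and retained on the right-hand side.
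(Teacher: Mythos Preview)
Your proposal is correct and follows essentially the same approach as the paper: apply the standard $L^2$-projection approximation bound $\ltwos{(\identity-\ltwoproj[m])(\weight\test)}{\cell}\lesssim\celldiam\ltwos{\nabla(\weight\test)}{\cell}$, expand $\nabla(\weight\test)$ via the product rule and \eqref{eqn:weight_grad}, match the two resulting terms against the diffusion and weighted-$L^2$ components of $\ndgps{\test}{\cell}$, combine with the trivial stability bound $\ltwos{(\identity-\ltwoproj[m])(\weight\test)}{\cell}\le 2\ltwos{\weight\test}{\cell}$ to recover the minimum in $\cellweight$, and use a trace estimate for the face bound; the global estimates then follow by summation and bounded overlap. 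Your discussion of the degenerate case $\weight\equiv 1$ and of the weight ratios $\weightmax{\cell}/\weightmin{\cell}$ is more explicit than the paper's terse proof, but note that these ratios never need to be bounded independently of $h$: they are absorbed \emph{exactly} into the definitions of $\subweight{\cell}$, $\cellweight$, $\edgepatchweight$, so the Lipschitz argument you sketch (giving $\exp(C\alpha\celldiam)$) is unnecessary.
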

	\begin{proof} We have
		\begin{equation}\label{magic}
		\begin{aligned}
	\ltwos{\left(\identity-\ltwoproj[m]\right)\left(\weight\test\right)}{\cell}
	 \lesssim &\ 
			\celldiam	\ltwos{\nabla\left(\weight\test\right)}{\cell}\\
			\lesssim &\  
				\frac{	\celldiam \gradweightmax{\cell}}{\sqrt{\correctedltwomin{\cell}\weightmin{\cell}}}\ltwosw{\sqrt{\correctedltwo}\test}{\cell}
				+
						\celldiam \sqrt{\weightmax{\cell}}\ltwosw{\nabla\test}{\cell}\\
		\lesssim &\ 
		 \celldiam \lambda_\cell	(\weightmin{\cell})^{-\half}\ndgps{\test}{\cell}.
		\end{aligned}
		\end{equation}
		At the same time, from the stability of orthogonal $L^2$-projection, we can also have 
		\[
			\ltwos{\left(\identity-\ltwoproj[m]\right)\left(\weight\test\right)}{\cell}
			\le 
						2 	\ltwos{\weight\test}{\cell}
			\lesssim   
			 \big( \weightmax{\cell}\correctedltwomin{\cell}^{-1}\big)^{\half}\ndgps{\test}{\cell}.
		\]
		Combining the above two estimates, we deduce the first bound in \eqref{eqn:local_ltwo}. 
		For the second bound, we start by observing the bound
		\[
		\ltwos{\left(\identity-\ltwoproj[m]\right)\left(\weight\test\right)}{\edge}
		\lesssim 
		\sqrt{\celldiam}	\ltwos{\nabla\left(\weight\test\right)}{\cell},
		\]
		and we conclude as in \eqref{magic}. 
		
		The global estimates \eqref{eqn:global_ltwo} follow by squaring, summation and the shape-regularity of the meshes which limits the amount of overlap occurring by the element patches.
\qed	
		\end{proof}
	%

%

\begin{definition}
\label{def:apost_est_stationary}
Let $\uh\in V_h$. We define the \aposteriori{} error estimator is given by
\begin{equation} 
\statestimator 
\coloneqq 
\Big( \sumovercells \statestimatorcell^2 \Big)^\half, \label{eqn:a_posteriori_estimator_defn}
\end{equation} 
where, for each element $\cell \in \tria$ the  local error indicator 
$\statestimatorcell$ is defined by
\begin{equation*} 
\statestimatorcell^2 = \interiorresidualsquared + \edgeresidualsquared + \edgejumpestimatorsquared,
\end{equation*}
with the  following notation: the \emph{interior residual}  
\begin{equation*}
\interiorresidualsquared 
= 
\cellweight^2\ltwos{\heating + \diffusivity\laplacian\uh - \conv \cdot \nabla\uh - \delta\uh}{\cell}^2,
\end{equation*}
the \emph{face residual} $\edgeresidual$
\begin{equation}\label{normal_flux_jump}
\edgeresidualsquared 
= 
\half \sum_{\edge \in \cellbdy \backslash \domainbdy} \edgepatchweight \ltwos{\jump{\diffusivity \nabla \uh}}{\edge}^2,
\end{equation}
and the \emph{face jump indicator}  $\edgejumpestimator$ 
\begin{align*}
\edgejumpestimatorsquared 
&= 
\sum_{\edge \in \cellbdy } \left(
\frac{\penal\diffusivity}{\edgediam}\left(\weightmax{\edgepatch} + \edgepatchvarweight\penal\diffusivity 
+\frac{\alpha^2\diffusivity\maxabs[\edge]{\grad\convgrad}^2}{\maxabs[\edge]{\weight}} \max_{\cell\in\edgepatch}\cellweight^2\right) +
\edgepatchweight\linftys{ \conv }{\edge}^2 \right. 
\\
&\left.\phantom{+\sumoveredges \left(\right.}+ \edgediam\linftysw{\correctedltwo}{\edgevertexpatch}
 +  \frac{\weightmax{\edgevertexpatch}\edgediam}{\diffusivity} \linftys{\conv-\alpha\diffusivity\grad\convgrad}{\edgevertexpatch}^2\right) \ltwos{\jump{\uh}}{\edge}^2,
\end{align*}
measuring the non-conformity of the function $\uh$.
\end{definition}
%
%

The next step is to establish the robustness of~\eqref{eqn:a_posteriori_estimator_defn} in estimating the error between the interior penalty dG solution $\uh$ and the true solution $\u$ of~\eqref{eqn:weak_pde} in the weighted norm. A key technical tool used in the derivation of \aposteriori \, bounds below is the following  trivial extension to the case of weighted norms of a well-known stability
result by Karakashian and Pascal~\cite{Karakashian2004}. 

\begin{theorem}
\label{thm:KarakashianPascal}
Let $\dgspace[c] \coloneqq V_h \cap \Honenonhomzero$, the conforming subspace of 
$\dgspace$ which satisfies the Dirichlet boundary condition \eqref{eqn:statconvdiffreacdirbdy} and let a positive function $\xi\in\linftyspace$ be given. 
For any $\testh\in V_h$, there exists a function $\KPapprox{\testh}\in\dgspace[c]$,
satisfying  
\begin{align*}
\sumovercells \ltwosw{\xi\left(\testh - \KPapprox{\testh}\right)}{\cell}^2 \lesssim \sumoverinternalanddirichletedges \linftysw{\xi}{\edgevertexpatch}^2 \edgediam \ltwos{\jump{\testh}}{\edge}^2 
,\\
\sumovercells \ltwosw{\xi\nabla \left(\testh - \KPapprox{\testh}\right)}{\cell}^2 \lesssim \sumoverinternalanddirichletedges \linftysw{\xi}{\edgevertexpatch}^2 \edgediam^{-1} \ltwos{\jump{\testh}}{\edge}^2 
.
\end{align*}
We refer to $\KPapproxbase : V_h \rightarrow \dgspace[c]$ as the KP
approximation operator.
\end{theorem}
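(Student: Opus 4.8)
The plan is to derive this weighted bound from the \emph{unweighted} stability estimate of Karakashian and Pascal \cite{Karakashian2004} by absorbing the weight $\weight\xi^2$ into local $L^\infty$-norms over element patches. The feature that makes the extension straightforward is that the conforming approximant is built by \emph{nodal averaging}. Concretely, fixing a Lagrange nodal basis of $V_h$ relative to $\tria$, one defines a linear operator $\KPapproxbase\colon V_h\to\dgspace[c]$ by assigning to each Lagrange node lying in $\domain\cup\neubdy$ the arithmetic average of the values of $\testh$ from the cells sharing that node, and the value $0$ to each node on $\dirbdy$; this makes $\KPapprox{\testh}$ continuous and zero on $\dirbdy$, i.e.\ $\KPapprox{\testh}\in\dgspace[c]$. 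On a cell $\cell$ the correction $\testh-\KPapprox{\testh}$ is the nodal interpolant of the nodal discrepancies, and the discrepancy at a node $a\in\closure{\cell}$ is a convex combination of jumps $\testh|_\cell(a)-\testh|_{\cell'}(a)$ over cells $\cell'\ni a$, each of which telescopes into a sum of values of $\jump{\testh}$ at $a$ across faces of the star of $a$. Hence only faces $\edge\in\mathcal{F}_\cell:=\{\edge\in\edges\colon\closure{\edge}\cap\closure{\cell}\neq\emptyset\}$ contribute to the local estimate on $\cell$.

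The first step is then to quote (or, since it is short, reassemble via scaling to the reference cell, equivalence of norms on the polynomial space, and a trace inequality) the unweighted local bounds: for every $\cell\in\tria$ and every $\testh\in V_h$,
\[
\begin{aligned}
\ltwos{\testh-\KPapprox{\testh}}{\cell}^2 &\lesssim \sum_{\edge\in\mathcal{F}_\cell}\edgediam\,\ltwos{\jump{\testh}}{\edge}^2,\\
\ltwos{\nabla(\testh-\KPapprox{\testh})}{\cell}^2 &\lesssim \sum_{\edge\in\mathcal{F}_\cell}\edgediam^{-1}\,\ltwos{\jump{\testh}}{\edge}^2,
\end{aligned}
\]
with constants depending only on $k$ and on the shape-regularity of the mesh family.

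The second step is to insert the weight. Setting $\mu:=\weight\xi^2$, the regularity $\weight\in\sobolevspace{1}{\infty}$ noted above together with the positivity of $\xi$ give $\sqrt{\mu}=\sqrt{\weight}\,\xi\in\linftyspace$ and $\linftysw{\xi}{\gendomain}=\|\sqrt{\mu}\|_{L^\infty(\gendomain)}$ for every measurable $\gendomain\subset\domain$. On each cell $\cell$,
\[
\ltwosw{\xi(\testh-\KPapprox{\testh})}{\cell}^2=\int_\cell\mu\,(\testh-\KPapprox{\testh})^2\,\dx\le\|\sqrt{\mu}\|_{L^\infty(\cell)}^2\,\ltwos{\testh-\KPapprox{\testh}}{\cell}^2,
\]
and likewise for the gradient term. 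If $\edge\in\mathcal{F}_\cell$ then $\cell$ shares a vertex with $\edge$, so $\cell\subset\edgevertexpatch$ and $\|\sqrt{\mu}\|_{L^\infty(\cell)}\le\linftysw{\xi}{\edgevertexpatch}$. Combining this with the unweighted local bounds, summing over $\cell\in\tria$, interchanging the order of summation, and using that each face belongs to $\mathcal{F}_\cell$ for only a shape-regularity-bounded number of cells $\cell$ (all contained in $\edgevertexpatch$) yields the two asserted inequalities; faces on $\neubdy$ disappear since $\jump{\testh}=\mathbf{0}$ there by definition, whereas faces on $\dirbdy$ are kept through the prescription $\KPapprox{\testh}|_{\dirbdy}=0$.

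I expect the only genuinely delicate point to be the direction of the geometric inclusion $\cell\subset\edgevertexpatch$ for $\edge\in\mathcal{F}_\cell$: the argument needs the version of the Karakashian--Pascal estimate in which \emph{every} face appearing on the right-hand side of the local bound on $\cell$ actually touches $\cell$, so that the local supremum of $\sqrt{\mu}$ over $\cell$ is dominated by its supremum over the larger patch $\edgevertexpatch$, and not conversely. This is exactly what the nodal-averaging construction supplies, since the correction on $\cell$ is driven only by jumps across faces in the stars of the vertices of $\cell$; once this is observed, no further ideas are needed and the weighted estimate follows verbatim from the unweighted one.
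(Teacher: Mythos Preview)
Your proposal is correct and follows the same approach as the paper: the paper's proof simply refers to \cite{Karakashian2004} for the case $\xi=1$ and remarks that the general case follows from the positivity and boundedness of $\xi$. Your argument is a careful elaboration of exactly this reduction, making explicit the local nature of the unweighted Karakashian--Pascal bounds and the inclusion $\cell\subset\edgevertexpatch$ for $\edge\in\mathcal{F}_\cell$ that allows the local weight to be absorbed into $\linftysw{\xi}{\edgevertexpatch}$.
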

\begin{proof}
	 We refer to \cite{Karakashian2004} for a constructive proof for $\xi=1$; the proof for general $\xi\in\linftyspace $ follows by the positivity and the boundedness of $\xi$. 
	 \qed
	 \end{proof}

In the spirit of \cite{Karakashian2004,HoustonPerugiaSchotzau2004, HoustonSchotzauWihler2007}, we decompose the discontinuous Galerkin solution into a conforming part and a non-conforming remainder:
\begin{equation*}
\uh = \uh[c] + \uh[d],
\end{equation*}
where $\uh[c] = \KPapprox{\uh} \in \dgspace[c] \coloneqq V_h \cap \Honenonhomzero$, with $\KPapproxbase$ the KP operator from Theorem \ref{thm:KarakashianPascal}, and $\uh[d] \coloneqq \uh - \uh[c]$. 
Triangle inequality implies
\begin{equation}
\ndgp{\u-\uh} + \ndgpa{\u-\uh} \leq \ndgp{\u-\uh[c]} + \ndgpa{\u-\uh[c]} + \ndgp{\uh[d]} + \ndgpa{\uh[d]}. \label{eqn:full_norm_decomposition}
\end{equation}
To show that estimator bounds the true error, we proceed by bounding from above norms of both the nonconforming term $\uh[d]$ and the continuous error $\u -\uh[c]$.

\begin{lemma}\label{lemma:bound_nonconforming_norms}
We have the bound
\begin{multline*}
\ndgp{\uh[d]}^2 + \ndgpa{\uh[d]}^2 \\
\lesssim 
\sumoveredges\Big(\weightmax{\edge} \frac{\penal\diffusivity}{\edgediam} +\edgediam \linftysw{\correctedltwo}{\edgevertexpatch} + \frac{\weightmax{\edgevertexpatch}\edgediam}{\diffusivity} \linftys{\conv-\alpha\diffusivity\grad\convgrad}{\edgevertexpatch}^2 \Big) 
\ltwos{\jump{\uh}}{\edge}^2.
\end{multline*}
\end{lemma}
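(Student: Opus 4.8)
The plan is to estimate the two contributions $\ndgp{\uh[d]}$ and $\ndgpa{\uh[d]}$ separately, exploiting that $\uh[d]=\uh-\uh[c]=\uh-\KPapprox{\uh}$ is precisely the quantity controlled by the Karakashian--Pascal stability estimate of Theorem~\ref{thm:KarakashianPascal}, and then to use the definitions \eqref{eqn:dg_norm_defn} and \eqref{eqn:a_norm_defn} of the two norms to convert the bounds into the stated face-sum. First I would expand $\ndgp{\uh[d]}^2$ into its three defining pieces: the weighted diffusion term $\diffusivity\ltwosw{\gradh\uh[d]}{\cell}^2$, the weighted reaction-type term $\ltwosw{\sqrt{\correctedltwo}\uh[d]}{\cell}^2$, and the jump penalty term $\frac{\penal\diffusivity}{\edgediam}\ltwosw{\jump{\uh[d]}}{\edge}^2$. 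For the first two I apply Theorem~\ref{thm:KarakashianPascal} with the weight choice $\xi=\sqrt{\weight}$ (for the gradient term, giving the $\edgediam^{-1}$ factor) and $\xi=\sqrt{\weight\,\correctedltwo}$ (for the reaction term, giving the $\edgediam$ factor), which produce exactly $\weightmax{\edgevertexpatch}\frac{\penal\diffusivity}{\edgediam}\ltwos{\jump{\uh}}{\edge}^2$ and $\edgediam\linftysw{\correctedltwo}{\edgevertexpatch}\ltwos{\jump{\uh}}{\edge}^2$ after summing over faces. For the jump-penalty piece one simply notes $\jump{\uh[d]}=\jump{\uh-\uh[c]}=\jump{\uh}$ since $\uh[c]$ is conforming, so that term contributes the $\weightmax{\edge}\frac{\penal\diffusivity}{\edgediam}\ltwos{\jump{\uh}}{\edge}^2$ summand directly.

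Next I would bound $\ndgpa{\uh[d]}^2$, which by \eqref{eqn:a_norm_defn} has two pieces: $\ndgpstar{\newconvwbraces\uh[d]}^2$ and $\sum_F \frac{\edgediam\linftys{\newconv}{\edge}^2}{\diffusivity}\ltwosw{\jump{\uh[d]}}{\edge}^2$. The second piece again uses $\jump{\uh[d]}=\jump{\uh}$ and directly yields the $\frac{\weightmax{\edgevertexpatch}\edgediam}{\diffusivity}\linftys{\conv-\alpha\diffusivity\grad\convgrad}{\edgevertexpatch}^2\ltwos{\jump{\uh}}{\edge}^2$ summand (here $\newconv=\conv-\alpha\diffusivity\grad\convgrad$ by the earlier notation, and the weight $\weight$ inside $\ltwosw{\cdot}{}$ contributes the $\weightmax{}$ factor). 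For the dual-norm piece I use the elementary bound \eqref{eqn:Hminusone_bound}, which gives $\ndgpstar{\newconvwbraces\uh[d]}^2\lesssim\frac1\diffusivity\sum_K\ltwosw{\newconv}{\cell,\infty}^2\ltwos{\uh[d]}{\cell}^2$; then I apply the $L^2$-part of Theorem~\ref{thm:KarakashianPascal} with $\xi=\sqrt{\weight}$ to each $\ltwos{\uh[d]}{\cell}^2$ — more carefully, one should carry the $\sqrt{\weight}$ weight through and use the $\xi=\sqrt{\weight}\,\frac{\linftys{\newconv}{\cdot}}{\sqrt{\diffusivity}}$ version to recover the correct $\edgediam/\diffusivity$ scaling, landing on the same $\frac{\weightmax{\edgevertexpatch}\edgediam}{\diffusivity}\linftys{\conv-\alpha\diffusivity\grad\convgrad}{\edgevertexpatch}^2$ coefficient that already appears in the statement. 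Summing the four groups of face contributions and absorbing constants gives the claimed inequality.

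The main obstacle I anticipate is the bookkeeping of the weight $\weight$ through the dual semi-norm $\ndgpstar{\cdot}$: unlike the other terms, $\ndgpstar{\newconvwbraces\uh[d]}$ is defined through a supremum over $\Hone$ test functions with the weight $\weight$ sitting inside the pairing, so one must be careful that \eqref{eqn:Hminusone_bound} is applied with the right placement of $\weight$ and that the resulting cellwise $L^2$-norms of $\uh[d]$ are then estimated via the \emph{weighted} Karakashian--Pascal bound with a judiciously chosen $\xi$ (incorporating both $\sqrt{\weight}$ and the local $\linftys{\newconv}{}/\sqrt\diffusivity$ factor) so that the powers of $\edgediam$ and $\diffusivity$ match those in the target expression. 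A secondary, routine difficulty is checking that the patch-type maxima ($\weightmax{\edgevertexpatch}$, $\linftysw{\correctedltwo}{\edgevertexpatch}$, etc.) indeed dominate the cellwise suprema produced by the stability estimate, which follows from shape-regularity limiting the overlap of the element patches $\edgevertexpatch$ — exactly the mechanism already invoked in the proof of the previous lemma.
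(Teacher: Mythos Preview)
Your proposal is correct and follows essentially the same route as the paper: expand the two norms via \eqref{eqn:dg_norm_defn} and \eqref{eqn:a_norm_defn}, use $\jump{\uh[d]}=\jump{\uh}$ for the jump terms, apply Theorem~\ref{thm:KarakashianPascal} with suitable weights for the volume terms, and invoke \eqref{eqn:Hminusone_bound} followed by Theorem~\ref{thm:KarakashianPascal} for the dual semi-norm $\ndgpstar{\cdot}$. The bookkeeping concern you flag about the weight $\weight$ in the dual term is handled exactly as you anticipate.
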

\begin{proof}
Since $\jump{\uh[d]}=\jump{\uh} $, we have
\begin{align*}
\ndgp{\uh[d]}^2 + \ndgpa{\uh[d]}^2 
=& 
\sumovercells \Big( \diffusivity \ltwosw{\nabla \uh[d]}{\cell}^2
+
\ltwosw{\sqrt{\correctedltwo}\uh[d]}{\cell}^2 \Big)
+
\ndgpstar{\newconvwbraces\uh[d]}^2 \\
&
+
\sumoveredges \left( \frac{\penal\diffusivity}{\edgediam} + \frac{\edgediam \linftys{\newconv}{\edge}^2}{\diffusivity} \right) \ltwosw{\jump{\uh}}{\edge}^2.
\end{align*}
Theorem~\ref{thm:KarakashianPascal} yields 
\begin{equation*}
\sumovercells \diffusivity \ltwosw{\nabla \uh[d]}{\cell}^2 
\lesssim 
\penal^{-1} \sumoverinternalanddirichletedges \frac{\penal\diffusivity}{\edgediam} \weightmax{\edge}\ltwos{\jump{\uh}}{\edge}^2, 
\end{equation*}
and
\begin{equation*}
\sumovercells \ltwosw{\sqrt{\correctedltwo} \uh[d]}{\cell}^2 
\lesssim 
\sumoverinternalanddirichletedges \edgediam \linftysw{\correctedltwo}{\edgevertexpatch} \ltwos{\abs*{\jump{\uh}}}{\edge}^2.
\end{equation*}
To estimate $\ndgpstar{\left(\conv-\alpha\diffusivity\grad\convgrad\right)\uh[d]}$, we apply Theorem~\ref{thm:KarakashianPascal} once more, with the bound %
\eqref{eqn:Hminusone_bound}, and obtain
\begin{align*}
\ndgpstar{\left(\conv-\alpha\diffusivity\grad\convgrad\right)\uh[d]}^2 \leq & \diffusivity^{-1} \sumovercells\linftysw{\newconv}{\cell}^2 \ltwos{\uh[d]}{\cell}^2 \\
\lesssim & \sumoverinternalanddirichletedges \diffusivity^{-1}\edgediam \weightmax{\edgevertexpatch}\linftys{\newconv}{\edgevertexpatch}^2 \ltwos{\abs*{\jump{\uh}}}{\edge}^2.
\end{align*}
Finally,
\begin{multline*}
\sumoveredges \left( \frac{\penal\diffusivity}{\edgediam} + \diffusivity^{-1}\edgediam \linftys{\newconv}{\edge}^2 \right) \ltwosw{\jump{\uh}}{\edge}^2 \\
\leq
\sumoveredges \weightmax{\edge} \left( \frac{\penal\diffusivity}{\edgediam} + \diffusivity^{-1}\edgediam \linftys{\newconv}{\edge}^2 \right) \ltwos{\jump{\uh}}{\edge}^2.
\end{multline*}
%
Collecting together these bounds and noting that 
$\weightmax{\edge} \leq \weightmax{\edgevertexpatch}$ yields the result.\qed
\end{proof}

To bound the conforming error, we begin by noting that $\ndgpa{\u - \uh[c]}  = \ndgpstar{\newconvwbraces\left(\u - \uh[c]\right)}$, cf. \eqref{eqn:a_norm_defn}. Then, the inf-sup Lemma \ref{lemma:infsup} yields:
\begin{equation*}
\ndgp{\u - \uh[c]} + \ndgpstar{\newconvwbraces\left(\u - \uh[c]\right)} 
\lesssim 
\sup_{\test \in \Honenonhomzero \backslash \{ 0 \} } \frac{\bilinearareac{ \u - \uh[c] }{ \weight\test}}{\ndgp{\test}},
\end{equation*}
for any $\test \in \Honenonhomzero$, since $\weight \in \sobolevspace{1}{\infty}$, we have that $\weight\test \in \Honenonhomzero$.
Noting that $\bilinearahreac{w}{\test}=\bilinearareac{w}{\test}
$ for all $w, \test \in \Honenonhomzero$, and using  \eqref{eqn:weak_pde} and \eqref{eqn:dgproblem}, gives, respectively, for any $\test \in \Honenonhomzero$,
\begin{align*}
&\bilinearareac{ \u - \uh[c] }{ \weight\test}\\
&= \bilinearareac{ \u}{\weight\test} - \bilinearahreac{ \uh[c]}{\weight\test} \\
&= \bilinearareac{ \u }{ \weight\test} - \bilinearahreac{ \uh}{\weight\test} + \bilinearahreac{ \uh[d]}{\weight\test} \\
&= \left( \heating, \weight\test\right) - \bilinearahreac{ \uh}{\weight\test} + \bilinearahreac{ \uh[d] }{\weight\test} \\
&= \left( \heating, \left(\identity-\ltwoproj[0]\right)\left(\weight\test\right)\right) + \left( \heating, \ltwoproj[0]\left(\weight\test\right)\right) - \bilinearahreac{ \uh }{\weight\test} + \bilinearahreac{ \uh[d] }{ \weight\test}\\
&= \left( \heating, \left(\identity-\ltwoproj[0]\right)\left(\weight\test\right)\right) - \bilinearahreac{ \uh }{\left(\identity-\ltwoproj[0]\right)\left(\weight\test\right)} + \bilinearahreac{ \uh[d] }{ \weight\test}. 
\end{align*}
We tackle the above terms in turns in the following lemmata.

\begin{lemma}\label{lemma:bound_projection_part}
For any $\test \in \Honenonhomzero$, we have
\begin{align*}
&\left( \heating, \left(\identity-\ltwoproj[0]\right)\left(\weight\test\right)\right) - \bilinearahreac{ \uh}{\left(\identity-\ltwoproj[0]\right)\left(\weight\test\right)} \\
\lesssim & 
\bigg( \sumovercells \left( \interiorresidualsquared + \edgeresidualsquared \right) +
\sumoveredges \left( \edgepatchvarweight \frac{\penal^2\diffusivity^2}{\edgediam}
+
\edgepatchweight\linftys{ \conv }{\edge}^2 \right) \ltwos{\jump{\uh}}{\edge}^2 \bigg)^\half \ndgp{\test}.
\end{align*}
\end{lemma}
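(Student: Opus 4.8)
The plan is to start from the identity established just before the statement, namely that the quantity to be bounded equals $(\heating,\phi)-\bilinearahreac{\uh}{\phi}$ with $\phi:=(\identity-\ltwoproj[0])(\weight\test)\in V_h+\Honenonhomzero$ (this is admissible as an argument of $\bilinearahreacblank$ precisely because of the $\ltwoproj[k]$-in-the-flux ``inconsistent'' formulation). First I expand $\bilinearahreac{\uh}{\phi}$ by element-wise integration by parts: the volume diffusion term $\sumovercells(\diffusivity\gradh\uh,\gradh\phi)_{\cell}$ produces $-\sumovercells(\diffusivity\laplacian\uh,\phi)_{\cell}$ plus element-boundary contributions, which I collect over faces by the elementary identity
\[
g^+_{\cell}\!\cdot\normal[\cell]\,\phi^+_{\cell}+g^+_{\neighborcell}\!\cdot\normal[\neighborcell]\,\phi^+_{\neighborcell}=\jump{g}\,\average{\phi}+\average{g}\cdot\jump{\phi}\qquad(g:=\diffusivity\gradh\uh).
\]
Since $\ltwoproj[k]\uh=\uh$, the symmetric consistency flux reads $\average{\diffusivity\grad\uh}$ and therefore cancels exactly the contribution $\sum_{\edge\in\internaledges}(\average{\diffusivity\grad\uh},\jump{\phi})_{\edge}$ generated by the integration by parts. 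After this cancellation (and treating the $\dirbdy$- and $\neubdy$-faces analogously, using the homogeneous Dirichlet data and $\conv\cdot\edgenormal=0$ on $\neubdy$), $(\heating,\phi)-\bilinearahreac{\uh}{\phi}$ splits into: (i) the element residuals $\sumovercells(\heating+\diffusivity\laplacian\uh-\conv\cdot\grad\uh-\addreac\uh,\phi)_{\cell}$; (ii) the normal flux jumps $-\sum_{\edge\in\internaledges}(\jump{\diffusivity\grad\uh},\average{\phi})_{\edge}$; and (iii) a family of terms each carrying a factor $\jump{\uh}$, namely the adjoint-consistency flux $\sum_{\edge}(\average{\diffusivity\grad\ltwoproj[k]\phi},\jump{\uh})_{\edge}$, the penalty flux $\sum_{\edge}\frac{\penal\diffusivity}{\edgediam}(\jump{\uh},\jump{\phi})_{\edge}$, and the upwind convective fluxes $\sum_{\cell}((\conv\cdot\edgenormal[\cell])\flowjump{\uh},\phi^+)_{\cellinflow\backslash\dirbdy}$ (together with their Dirichlet-boundary analogue, where $\ltwos{\uh^+}{\edge}=\ltwos{\jump{\uh}}{\edge}$).

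Groups (i) and (ii) are routine once one invokes the local $L^2$-estimates \eqref{eqn:local_ltwo} with $m=0$ (applicable since $\phi=(\identity-\ltwoproj[0])(\weight\test)$). For (i), Cauchy--Schwarz on each cell and $\cellweight^{-1}\ltwos{\phi}{\cell}\lesssim\ndgps{\test}{\cell}$ bound the $\cell$-term by $(\interiorresidualsquared)^{\frac12}\ndgps{\test}{\cell}$; for (ii), Cauchy--Schwarz on each face and $\edgepatchweight^{-\frac12}\ltwos{\phi}{\edge}\lesssim\ndgps{\test}{\edgepatch}$ (used to control $\ltwos{\average{\phi}}{\edge}$) bound the $\edge$-term by $(\edgepatchweight\ltwos{\jump{\diffusivity\grad\uh}}{\edge}^2)^{\frac12}\ndgps{\test}{\edgepatch}$. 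Summing over cells resp.\ faces, a further Cauchy--Schwarz and the finite overlap of element patches yield the $\big(\sumovercells(\interiorresidualsquared+\edgeresidualsquared)\big)^{\frac12}\ndgp{\test}$ part of the claimed bound.

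Group (iii) is the main obstacle. Here I again bound $\ltwos{\phi^+}{\edge}$ and $\ltwos{\jump{\phi}}{\edge}$ by $\edgepatchweight^{\frac12}\ndgps{\test}{\edgevertexpatch}$, which immediately handles the penalty flux and, using $|\flowjump{\uh}|=|\jump{\uh}|$ on interior faces, the upwind convective flux, producing the $\edgepatchweight\linftys{\conv}{\edge}^2\ltwos{\jump{\uh}}{\edge}^2$ term. For the adjoint-consistency flux I note that $\ltwoproj[k]\phi$ is a polynomial with $\nabla\ltwoproj[k]\phi=\nabla\ltwoproj[k](\weight\test)$, so an inverse trace inequality followed by an inverse estimate gives $\ltwos{\nabla\ltwoproj[k]\phi}{\edge}\lesssim\celldiam^{-\frac32}\ltwos{\phi}{\cell}\lesssim\celldiam^{-\frac32}\cellweight\ndgps{\test}{\cell}$. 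It then remains to absorb all mesh- and weight-dependent prefactors into the single factor $\edgepatchvarweight\,\penal^2\diffusivity^2/\edgediam$; this is achieved via the elementary inequalities $\edgepatchweight\lesssim\edgediam\,\edgepatchvarweight$ and $\cellweight^2\celldiam^{-2}\lesssim\edgepatchvarweight$ — both direct consequences of the definitions \eqref{eqn:cell_and_edge_weights} (the $\min$ in $\cellweight$ and $\edgepatchweight$) and shape-regularity — together with $\penal\ge1$. Summing over faces, a last Cauchy--Schwarz and bounded patch overlap then complete the proof. The only genuinely delicate point is this final bookkeeping of the weights in (iii), in particular reconciling the two inverse-inequality powers in the adjoint-consistency term with the estimator's $\edgepatchvarweight\,\penal^2\diffusivity^2/\edgediam$; it is exactly here that the tailored local weights \eqref{eqn:cell_and_edge_weights} do their job.
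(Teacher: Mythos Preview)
Your proof is correct and follows essentially the same route as the paper: integration by parts to expose the element residuals and normal-flux jumps, combination of the IBP boundary term with the consistency flux, and then estimation of each piece via the local $L^2$-bounds \eqref{eqn:local_ltwo}--\eqref{eqn:global_ltwo} together with the weight relations \eqref{eqn:cell_and_edge_weights}. In fact you are slightly more thorough than the paper, which silently drops the adjoint-consistency contribution $\sum_{\edge}(\average{\diffusivity\grad\ltwoproj[k]\phi},\jump{\uh})_{\edge}$ from its list $T_1,\dots,T_6$; your inverse-estimate argument for this term (yielding a factor absorbed by $\edgepatchvarweight\,\penal^2\diffusivity^2/\edgediam$ via $\cellweight^2\celldiam^{-2}\le\cellvarweight\le\edgepatchvarweight$) is exactly the mechanism the paper uses for the analogous $S_4$ in the next lemma, so the stated bound remains valid.
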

\begin{proof}
Set
\begin{equation*}
T = \left( \heating, \left(\identity-\ltwoproj[0]\right)\left(\weight\test\right)\right) - \bilinearahreac{ \uh}{\left(\identity-\ltwoproj[0]\right)\left(\weight\test\right)}.
\end{equation*}
Then,  employing integration by parts,
\begin{align*}
T =& \sumovercells \cellinnerprod{\heating + \diffusivity \Delta \uh - \conv \cdot \nabla \uh - \delta\uh}{ \left(\identity-\ltwoproj[0]\right)\left(\weight\test\right) }\\
&-
\sumovercells \edgeinnerprod{ \diffusivity \nabla \uh \cdot \normal[\cell]}{\left(\identity-\ltwoproj[0]\right)\left(\weight\test\right)}{\cellbdy} \\
&+
\sumoverinternalanddirichletedges \edgeinnerprod{\average{\diffusivity \nabla \uh } }{ \jump{\left(\identity-\ltwoproj[0]\right)\left(\weight\test\right)} }{\edge}
\\
&-
\sumoverinternalanddirichletedges \frac{\penal\diffusivity}{\edgediam} \edgeinnerprod{ \jump{\uh} }{ \jump{\ltwoproj[0]\left(\weight\test\right)} }{\edge}
\\
&+ \sumovercells \edgeinnerprod{\conv \cdot \bdynormal[\cell] \uh }{ \left(\identity-\ltwoproj[0]\right)\left(\weight\test\right) }{\cellinflow\cap \dirbdy} \\
&+
\sumovercells \edgeinnerprod{\conv \cdot \normal[\cell] \flowjump{\uh} }{ \left(\identity-\ltwoproj[0]\right)\left(\weight\test\right) }{\cellinflow\backslash \dirbdy} 
\\
 =&\  T_1 + T_2 + T_3 + T_4 + T_5 + T_6.
\end{align*}
For $T_1$, using \eqref{eqn:global_ltwo}, we have
\begin{equation*}
T_1 \lesssim  \bigg( \sumovercells \interiorresidualsquared\bigg)^\half \bigg( \sumovercells  \cellweight^{-2} \ltwos{\left(\identity-\ltwoproj[0]\right)\left(\weight\test\right)}{\cell}^2 \bigg)^\half 
\lesssim  \bigg( \sumovercells \interiorresidualsquared \bigg)^\half \ndgp{\test}.
\end{equation*}
$T_2+T_3$ can be written in terms of jumps and averages as follows
\begin{align*}
T_2 + T_3 
= &
-
\sumoverinternalanddirichletedges \! \edgeinnerprod{ \jump{\diffusivity \nabla \uh} }{ \average{\left(\identity-\ltwoproj[0]\right)\left(\weight\test\right)} }{\edge}\\
& +
\sumoverneumannedges \!\edgeinnerprod{ \jump{\diffusivity \nabla \uh} }{ \average{\left(\identity-\ltwoproj[0]\right)\left(\weight\test\right)} }{\edge}\\
\lesssim 
& \bigg( \sumoveredges \edgepatchweight \ltwos{\jump{\diffusivity \nabla \uh}}{\edge}^2 \bigg)^\half \bigg( \sumoveredges \edgepatchweight^{-1} \ltwos{\left(\identity-\ltwoproj[0]\right)\left(\weight\test\right)}{\edge}^2 \bigg)^\half 
\\
\lesssim& \bigg( \sumoveredges \edgepatchweight \ltwos{\jump{\diffusivity \nabla \uh}}{\edge}^2 \bigg)^\half \ndgp{\test}
\lesssim\bigg( \sumovercells \edgeresidual \bigg)^\half \ndgp{\test},
\end{align*}
employing again \eqref{eqn:global_ltwo} in the penultimate inequality.  

To bound $T_4$, 
we begin by noting $\jump{\weight\test}=0$ a.e.~on each $\edge\in\edges$, and we have
\begin{align*}
T_4 =&-
\sumoverinternalanddirichletedges \frac{\penal\diffusivity}{\edgediam} \edgeinnerprod{ \jump{\uh} }{ \jump{\ltwoproj[0]\left(\weight\test\right)} }{\edge}  =-\sumoverinternalanddirichletedges \frac{\penal\diffusivity}{\edgediam} \edgeinnerprod{ \jump{\uh} }{ \jump{(\identity-\ltwoproj[0])\left(\weight\test\right)} }{\edge} 
\\ 
\lesssim 
&\bigg(\sumoveredges \edgepatchvarweight \frac{\penal^2\diffusivity^2}{\edgediam} \ltwos{\jump{\uh}}{\edge}^2 \bigg)^\half
\bigg( \sumoveredges \edgepatchvarweight^{-1} \edgediam^{-1} \ltwos{\jump{\left(\identity -\ltwoproj[0]\right)\left(\weight\test\right)}}{\edge}^2 \bigg)^\half 
\\
\lesssim &\bigg(\sumoveredges \edgepatchvarweight \frac{\penal^2\diffusivity^2}{\edgediam} \ltwos{\jump{\uh}}{\edge}^2 \bigg)^\half \ndgp{\test},
\end{align*}
using \eqref{eqn:global_ltwo} and \eqref{eqn:cell_and_edge_weights}.

To bound the final terms $T_5 + T_6$, we again use \eqref{eqn:global_ltwo} and work as above:
\begin{align*}
T_5 + T_6 &= \sumovercells \edgeinnerprod{\conv \cdot \bdynormal[\cell] \uh }{ \left(\identity-\ltwoproj[0]\right)\left(\weight\test\right) }{\cellinflow\cap \dirbdy} \\
&\phantom{\sumovercells}+
\sumovercells \edgeinnerprod{\conv \cdot \normal[\cell] \flowjump{\uh} }{ \left(\identity-\ltwoproj[0]\right)\left(\weight\test\right) }{\cellinflow\backslash \dirbdy} \\
&= \sumoveredges \edgeinnerprod{ \jump{\conv \uh} }{ \left(\identity - \ltwoproj[0] \right) \left(\weight\test\right)}{\edge} 
\\
& \lesssim \bigg( \sumoveredges \edgepatchweight \ltwos{\jump{ \conv\uh}}{\edge} ^2\bigg)^\half 
\bigg( \sumoveredges \edgepatchweight^{-1} \ltwos{\left(\identity-\ltwoproj[0]\right)\left(\weight\test\right)}{\edge}^2 \bigg)^\half 
\\
& \lesssim \bigg( \sumoveredges \edgepatchweight\linftys{ \conv }{\edge}^2 \ltwos{\jump{ \uh}}{\edge}^2 \bigg)^\half \ndgp{\test},
\end{align*}
from the continuity of $\conv$ in the normal direction.\qed
\end{proof}

\begin{lemma}\label{lemma:bound_nonconforming_part}
For any $\test \in \Honenonhomzero$, there holds
\begin{align*}
\bilinearahreac{\uh[d]}{\weight\test} 
\lesssim& 
\bigg( \sumoveredges \bigg( \frac{\penal\diffusivity}{\edgediam}\bigg(\weightmax{\edgepatch} + \edgepatchvarweight\diffusivity  +\frac{\alpha^2\diffusivity\maxabs[\edge]{\grad\convgrad}^2}{\maxabs[\edge]{\weight}} \max_{\cell\in\edgepatch}\cellweight^2\bigg)
\\
&\ + \edgediam\linftysw{\correctedltwofull}{\edgevertexpatch}+  \vphantom{\left( \frac{\weightmax{\edge}\alpha^2\diffusivity\maxabs[\edge]{\grad\convgrad}^2}{\correctedltwomin{\edgepatch}}\right)}\frac{\edgediam}{\diffusivity} \linftysw{\conv-\alpha\diffusivity\grad\convgrad}{\edgevertexpatch}^2 \bigg) \ltwos{\jump{\uh}}{\edge}^2 \bigg)^\half
\ndgp{\test},
\end{align*}
with $\correctedltwofull$ defined as in \eqref{eqn:M_defn}.
\end{lemma}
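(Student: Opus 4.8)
The plan is to expand $\bilinearahreac{\uh[d]}{\weight\test}$ using the explicit form \eqref{eqn:bilinearAweight_h} with $\wh=\uh[d]$, and then to bound each of the resulting groups of terms by $\ndgp{\test}$ times a suitable jump-contribution, exploiting that $\jump{\uh[d]}=\jump{\uh}$ on every face while $\jump{\weight\test}=0$ a.e.~on $\edges$. The natural strategy is to split the six groups of terms appearing in \eqref{eqn:bilinearAweight_h} into: the volume diffusion term $\sumovercells\cellinnerprod{\diffusivity\gradh\uh[d]}{\weight\gradh\test}$; the volume convection–reaction term $\sumovercells\cellinnerprod{\newconvwbraces\cdot\gradh\uh[d]+\addreac\uh[d]}{\weight\test}$; the two flux-average face terms; the penalty face term $\sumoveredges\frac{\diffusivity\penal}{\edgediam}\edgeinnerprod{\jump{\uh[d]}}{\jump{\weight\test}}{\edge}$; and the inflow-boundary/upwind convection terms on $\cellinflow\cap\dirbdy$ and $\cellinflow\backslash\dirbdy$.

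First, for the volume diffusion term I would write $\diffusivity\weight^{1/2}\gradh\uh[d]$ against $\diffusivity^{1/2}\weight^{1/2}\gradh\test$, apply Cauchy–Schwarz cellwise, estimate $\sumovercells\diffusivity\ltwosw{\gradh\uh[d]}{\cell}^2$ by Theorem~\ref{thm:KarakashianPascal} (as in Lemma~\ref{lemma:bound_nonconforming_norms}, giving the $\weightmax{\edgepatch}\frac{\penal\diffusivity}{\edgediam}$ jump-contribution), and absorb $\sumovercells\diffusivity\ltwosw{\gradh\test}{\cell}^2\le\ndgp{\test}^2$. For the reaction part $\innerprod{\addreac\uh[d]}{\weight\test}$ and the $\frac12\ltwoweightexplicit$ contribution hidden inside $\correctedltwofull$ one bounds $\sumovercells\ltwosw{\sqrt{\correctedltwofull}\uh[d]}{\cell}^2$ again by the KP estimate to produce the $\edgediam\linftysw{\correctedltwofull}{\edgevertexpatch}$ term, paired with $\sumovercells\ltwosw{\sqrt{\correctedltwo}\test}{\cell}^2\le\ndgp{\test}^2$; the convective volume term $\innerprod{\newconvwbraces\cdot\gradh\uh[d]}{\weight\test}$ is handled after integration by parts on each cell together with the inflow terms $T_5{+}T_6$-style, turning the interior contributions into $\sumoveredges\edgeinnerprod{\jump{\newconvwbraces\uh[d]}}{\weight\test}{\edge}=0$ (since $\weight\test$ is single-valued) plus a bulk $\ltwoweightexplicit$ piece already counted, so that what survives is controlled by $\ndgpstar{\newconvwbraces\uh[d]}$, itself bounded via \eqref{eqn:Hminusone_bound} and KP by the $\frac{\edgediam}{\diffusivity}\linftysw{\conv-\alpha\diffusivity\grad\convgrad}{\edgevertexpatch}^2$ term. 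The two flux-average face terms, after inserting $\ltwoproj[k]$ and using $\jump{\weight\test}=0$, reduce to pairings of $\average{\diffusivity\grad\ltwoproj[k]\uh[d]}$ with $\jump{\weight\test}=0$ — hence vanish — while the adjoint-consistency term $\edgeinnerprod{\average{\diffusivity\grad\ltwoproj[k](\weight\test)}}{\jump{\uh[d]}}{\edge}$ is estimated by a trace-inverse inequality on $\grad\ltwoproj[k](\weight\test)$ against $\ltwos{\jump{\uh}}{\edge}$, producing the $\frac{\alpha^2\diffusivity\maxabs[\edge]{\grad\convgrad}^2}{\maxabs[\edge]{\weight}}\max_{\cell\in\edgepatch}\cellweight^2$ contribution through $\grad(\weight\test)=\weight\grad\test-\alpha\weight\grad\convgrad\,\test$ and \eqref{eqn:global_ltwo}; the penalty face term gives, by Cauchy–Schwarz and a KP/trace argument on $\jump{(\identity-\ltwoproj[0])(\weight\test)}$ exactly as in the bound for $T_4$ in Lemma~\ref{lemma:bound_projection_part}, the $\edgepatchvarweight\diffusivity\frac{\penal\diffusivity}{\edgediam}$ term. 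Finally the inflow/upwind face terms on $\cellinflow$ are bounded, using $\flowjump{\weight\test}$ involving only $\jump{\weight\test}=0$ or controlled traces, by the same $\frac{\edgediam}{\diffusivity}\linftysw{\conv-\alpha\diffusivity\grad\convgrad}{\edgevertexpatch}^2\ltwos{\jump{\uh}}{\edge}^2$ contribution.

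Collecting all surviving jump-contributions over faces and using shape-regularity to control patch overlaps then yields the stated bound with the constant $\correctedltwofull$ (rather than $\correctedltwo$) appearing precisely because the convective integration-by-parts on $\uh[d]$ contributes a full $\ltwoweightexplicit$ term, not just the half that enters $\correctedltwo$ in the coercivity identity.

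The main obstacle I expect is the careful treatment of the term $\edgeinnerprod{\average{\diffusivity\grad\ltwoproj[k](\weight\test)}}{\jump{\uh[d]}}{\edge}$: since $\weight\test\in\Hone$ is not a polynomial, one must commute $\ltwoproj[k]$ with the gradient and a face trace, invoke the product rule $\grad(\weight\test)=-\alpha\weight\grad\convgrad\,\test+\weight\grad\test$ to separate the two scales, and then apply a local trace/inverse estimate together with \eqref{eqn:global_ltwo} to absorb each piece into $\ndgp{\test}$ while extracting exactly the weights $\cellweight$, $\maxabs[\edge]{\grad\convgrad}$, $\maxabs[\edge]{\weight}$ appearing in the claimed estimate — making sure the inverse-estimate power of $\edgediam$ and $\diffusivity$ matches the $\frac{\penal\diffusivity}{\edgediam}$ prefactor. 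The bookkeeping of which patch ($\edgepatch$ versus $\edgevertexpatch$) each essential supremum is taken over, so that the final weights coincide with those in the statement of Definition~\ref{def:apost_est_stationary}, is the other delicate point.
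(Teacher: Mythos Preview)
Your overall architecture is close to the paper's, but there is a genuine gap and a mis-attribution of which term produces which estimator contribution.

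\textbf{The main error.} You claim that, after integrating the convection volume term by parts and combining with the inflow/upwind terms, ``the interior contributions'' reduce to $\sumoveredges\edgeinnerprod{\jump{\newconvwbraces\uh[d]}}{\weight\test}{\edge}=0$. This is false: $\weight\test$ being single-valued does \emph{not} kill this pairing, because $\uh[d]$ has nontrivial jumps. The upwind terms in \eqref{eqn:bilinearAweight_h} vanish on their own (they carry $\flowjump{\weight\test}=0$ and $\weight\test|_{\dirbdy}=0$), so they cannot be used to cancel the boundary contribution from the integration by parts. What survives is a genuine face term of the form $\sum_F\int_F\newconvwbraces\cdot\jump{\uh[d]}\,\weight\test$; in the paper's decomposition this is the term $S_5$ (there written with the $-\alpha\diffusivity\grad\convgrad$ part only, because the paper starts from the original bilinear form with $\conv$, and the $\conv$-part of the face contribution cancels against the original upwind terms that carry $\flowjump{\uh[d]}$). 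It is this $S_5$ that is estimated via a trace inequality and \eqref{eqn:cell_and_edge_weights} to produce the contribution $\frac{\alpha^2\diffusivity\maxabs[\edge]{\grad\convgrad}^2}{\maxabs[\edge]{\weight}}\max_{\cell\in\edgepatch}\cellweight^2$ --- not the adjoint-consistency flux term.

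\textbf{Two consequent mis-attributions.} First, the penalty term $\frac{\diffusivity\penal}{\edgediam}\edgeinnerprod{\jump{\uh[d]}}{\jump{\weight\test}}{\edge}$ vanishes outright because $\jump{\weight\test}=0$; there is nothing to estimate and no $T_4$-style argument is needed here. Second, for the surviving adjoint-consistency flux $S_4=-\sumoveredges\edgeinnerprod{\average{\diffusivity\grad\ltwoproj[k](\weight\test)}}{\jump{\uh[d]}}{\edge}$, the paper does \emph{not} use the product rule on $\grad(\weight\test)$ (which would not commute with $\ltwoproj[k]$ anyway). Instead it writes $\nabla\ltwoproj[k](\weight\test)=\nabla\ltwoproj[k](\identity-\ltwoproj[0])(\weight\test)$, applies a trace/inverse estimate to pass to $\celldiam^{-3}\ltwos{(\identity-\ltwoproj[0])(\weight\test)}{\cell}^2$, and then invokes \eqref{eqn:global_ltwo}. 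This yields the $\edgepatchvarweight\diffusivity$ contribution, not the $\alpha^2\diffusivity\maxabs[\edge]{\grad\convgrad}^2$ one you assign to it.

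The bounds for $S_1$, $S_2$, $S_3$ via Theorem~\ref{thm:KarakashianPascal} and \eqref{eqn:Hminusone_bound} are as you describe and match the paper.
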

\begin{proof}
Recalling the definition of $\bilinearahreacblank$, we have 
\begin{align*}
\bilinearahreac{\uh[d]}{\weight\test} 
=
& \sumovercells \cellinnerprod{\diffusivity\gradh\uh[d]}{\gradh\left(\weight\test\right)} - \cellinnerprod{\uh[d]}{\gradh\cdot\left(\conv\weight\test\right)} + \cellinnerprod{\addreac \uh[d]}{\weight\test} \\
&- \sumoveredges \edgeinnerprod{ \average{\diffusivity\grad\ltwoproj[k]\left(\weight\test\right)}}{\jump{\uh[d]}}{\edge} \\
=& \sumovercells \cellinnerprod{\diffusivity\weight\grad\uh[d]}{\grad\test} - \cellinnerprod{\left(\conv - \alpha\diffusivity\grad\convgrad\right)\weight\uh[d]}{\grad\test} \\
&+ \sumovercells \cellinnerprod{\correctedltwofull\uh[d]}{\weight\test}\\
&-
\sumoveredges \edgeinnerprod{ \average{\diffusivity\grad\ltwoproj[k]\left(\weight\test\right)} }{ \jump{\uh[d]} }{\edge} - \sumovercells \alpha\diffusivity\edgeinnerprod{ \grad\convgrad \cdot \normal[\cell]\uh[d]}{\weight\test}{\cellbdy}\\
=& S_1 + S_2 + S_3 + S_4 + S_5,
\end{align*}
by the product rule, and integration by parts.
By the Cauchy-Schwarz inequality 
and Theorem \ref{thm:KarakashianPascal},
\begin{align*}
S_1 
&\leq \bigg(\sumovercells\int_\cell \diffusivity \weight \abs*{\grad\uh[d]}^2\dx\bigg)^\half \ndgp{\test} 
\lesssim \penal^{-\half}\bigg(\sumoveredges\weightmax{\edgepatch} \frac{\penal\diffusivity}{\edgediam} \ltwos{\jump{\uh}}{\edge}^2\bigg)^\half \ndgp{\test}.
\end{align*}
Using the definition of the semi-norm $\ndgpstar{\cdot}$, Theorem~\ref{thm:KarakashianPascal} 
and \eqref{eqn:Hminusone_bound},
\begin{align*}
S_2 \leq&\  \ndgpstar{\left(\conv-\alpha\diffusivity\grad\convgrad\right)\uh[d]}\ndgp{\test}
\\
\leq& \ \diffusivity^{-\half} \bigg( \sumovercells \linftysw{\conv-\alpha\diffusivity\grad\convgrad}{\cell}^2 \ltwos{\uh[d]}{\cell}^2 \bigg)^\half \ndgp{\test} 
\\
\lesssim& \bigg( \sumoveredges \frac{\edgediam}{\diffusivity} \linftysw{\conv-\alpha\diffusivity\grad\convgrad}{\edgevertexpatch}^2 \ltwos{\jump{\uh}}{\edge}^2 \bigg)^\half \ndgp{\test}.
\end{align*}
From Theorem~\ref{thm:KarakashianPascal}, and by the identity $\correctedltwofull+\addreac = 2\correctedltwo$ (see \eqref{eqn:L_defn}), and the choice of $\addreac$ from \eqref{eqn:addreacchoice}, we have, respectively,
\begin{align*}
S_3 
&\leq \bigg(\sumovercells\int_\cell\weight\correctedltwofull\left(\uh[d]\right)^2 \dx \bigg)^\half \bigg(\sumovercells \int_\cell\weight\left(\correctedltwofull+\addreac\right)\test[2]\dx\bigg)^\half 
\\
&\lesssim \bigg(\sumovercells\int_\cell\weight\correctedltwofull\left(\uh[d]\right)^2 \dx \bigg)^\half \ndgp{\test} 
\\
&\lesssim \bigg( \sumoveredges \edgediam \linftysw{\correctedltwofull}{\edgevertexpatch}^2 \ltwos{\jump{\uh}}{\edge}^2 \bigg)^\half \ndgp{\test}.
\end{align*}
Employing standard inverse estimates, we have, respectively,
\begin{align*}
\ltwos{\nabla\ltwoproj[k](\weight\test)}{\edge}^2=&\ \ltwos{\nabla\ltwoproj[k](\identity-\ltwoproj[0])(\weight\test)}{\edge}^2\lesssim \celldiam^{-3}\ltwos{\ltwoproj[k](\identity-\ltwoproj[0])(\weight\test)}{\cell}^2\\
\le&\ \celldiam^{-3}\ltwos{(\identity-\ltwoproj[0])(\weight\test)}{\cell}^2
\end{align*}
from the stability of the $L^2$-projection, so that
\begin{align*}
S_4 &\leq \penal^{-\half}\bigg( \sumoveredges \int_\edge \edgepatchvarweight\frac{\penal\diffusivity^2}{\edgediam}|\jump{\uh}|^2\ds\bigg)^\half
 \bigg(\sumoveredges\edgepatchvarweight^{-1}\edgediam\int_\edge|\average{\grad\ltwoproj[k]\left(\weight\test\right)}|^2\ds\bigg)^\half
\\
&\lesssim \penal^{-\half}\bigg( \sumoveredges \edgepatchvarweight\frac{\penal\diffusivity^2}{\edgediam}\ltwos{\jump{\uh}}{\edge}^2\bigg)^\half
 \bigg(\sumovercells\cellvarweight^{-1}h_{\cell}^{-2}\ltwos{(\identity-\ltwoproj[0])\weight\test}{\cell}^2\bigg)^\half
\\
&\lesssim \penal^{-\half}\bigg( \sumoveredges \edgepatchvarweight\frac{\penal\diffusivity^2}{\edgediam}\ltwos{\jump{\uh}}{\edge}^2\bigg)^\half \ndgp{\test}.
\end{align*}
from \eqref{eqn:global_ltwo}.
Finally, straightforward estimation and a trace estimate imply, respectively,
\begin{align*}
	S_5 =&
	-\sumovercells \int_\cellbdy \alpha\diffusivity\grad\convgrad \cdot \normal[\cell]\weight\uh[d]\test\ds
	=-\sumoveredges \int_\edge \alpha\diffusivity \grad\convgrad\cdot\jump{\uh[d]} \weight\test \ds \\
	\le &
\sumoveredges  \alpha\diffusivity\maxabs[\edge]{\grad\convgrad}\sqrt{\maxabs[\edge]{\weight}}\ltwosw{\jump{\uh}}{\edge}\ltwos{\test}{\edge}  \\
\lesssim&
\sumoveredges  \alpha\diffusivity\maxabs[\edge]{\grad\convgrad}\sqrt{\maxabs[\edge]{\weight}}\ltwosw{\jump{\uh}}{\edge}\celldiam^{-\half}\big(\ltwos{\test}{\cell} +\celldiam\ltwos{\nabla\test}{\cell}\big)\\
\lesssim&
\sumoveredges  \alpha\diffusivity\maxabs[\edge]{\grad\convgrad}\big(\maxabs[\edge]{\weight}\minabs[\cell]{\weight}^{-1}\celldiam^{-1}\big)^\half \ltwosw{\jump{\uh}}{\edge}\big(\correctedltwomin{\cell}^{-\half}\ltwosw{\sqrt{\correctedltwo}\test}{\cell} +\celldiam\ltwosw{\nabla\test}{\cell}\big), 
\end{align*}
for  an element $\cell$ with $\edge\subset \partial\cell$. Continuing with application of the Cauchy-Schwarz inequality and \eqref{eqn:cell_and_edge_weights}, we get
\begin{align*}
	S_5 
	\lesssim& \ \penal^{-\half}
	\bigg(\sumoveredges  \frac{\penal\alpha^2\diffusivity^2\maxabs[\edge]{\grad\convgrad}^2}{\edgediam\maxabs[\edge]{\weight}} \max_{\cell\in\edgepatch}\cellweight^2\ltwosw{\jump{\uh}}{\edge}^2\bigg)^\half
\ndgp{\test}
\end{align*}

%
\end{proof}

Collecting together the above developments immediately yields a bound on the conforming error as follows.
\begin{lemma}\label{lemma:conforming_bound}
There holds:
\begin{align*}
\ndgp{\u - \uh[c]} &+ \ndgpa{\u - \uh[c]}
\lesssim
\left( \sumovercells \left( \interiorresidualsquared + \edgeresidualsquared\right) \right.
\\
&+
\sumoveredges \left( 
\frac{\penal\diffusivity}{\edgediam}\bigg(\weightmax{\edgepatch} + \edgepatchvarweight\penal\diffusivity 
 +\frac{\alpha^2\diffusivity\maxabs[\edge]{\grad\convgrad}^2}{\maxabs[\edge]{\weight}} \max_{\cell\in\edgepatch}\cellweight^2\bigg) +
\edgepatchweight\linftys{ \conv }{\edge}^2 \right. 
\\
&\left.\phantom{\sumoveredges \left(\right.}\left.\vphantom{\frac{\weightmax{\edge}\alpha^2\diffusivity\maxabs[\edge]{\grad\convgrad}^2}{\correctedltwomin{\edgepatch}}} \edgediam\linftysw{\correctedltwofull}{\edgevertexpatch}
 +  \frac{\edgediam}{\diffusivity} \linftysw{\conv-\alpha\diffusivity\grad\convgrad}{\edgevertexpatch}^2\right) \ltwos{\jump{\uh}}{\edge}^2 \right)^\half.
\end{align*}\qed
\end{lemma}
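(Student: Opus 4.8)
The plan is to assemble Lemma~\ref{lemma:conforming_bound} directly from the inf-sup estimate already derived in the text and the three preceding lemmata. Recall that just before the statement we established, using Lemma~\ref{lemma:infsup} and the fact that $\weight\test\in\Honenonhomzero$ whenever $\test\in\Honenonhomzero$, the bound
\[
\ndgp{\u - \uh[c]} + \ndgpstar{\newconvwbraces\left(\u - \uh[c]\right)}
\lesssim
\sup_{\test \in \Honenonhomzero \backslash \{ 0 \} } \frac{\bilinearareac{ \u - \uh[c] }{ \weight\test}}{\ndgp{\test}},
\]
together with the algebraic identity
\[
\bilinearareac{ \u - \uh[c] }{ \weight\test}
= \left( \heating, \left(\identity-\ltwoproj[0]\right)\left(\weight\test\right)\right) - \bilinearahreac{ \uh }{\left(\identity-\ltwoproj[0]\right)\left(\weight\test\right)} + \bilinearahreac{ \uh[d] }{ \weight\test}.
\]
The key observation is that $\ndgpa{\u-\uh[c]}=\ndgpstar{\newconvwbraces(\u-\uh[c])}$, since $\u-\uh[c]\in\Honenonhomzero$ is continuous and therefore the jump contribution in \eqref{eqn:a_norm_defn} vanishes; hence the left-hand side of the desired inequality is precisely what the inf-sup bound controls.

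First I would invoke Lemma~\ref{lemma:bound_projection_part} to bound the first two terms on the right by
\[
\bigg( \sumovercells \left( \interiorresidualsquared + \edgeresidualsquared \right) +
\sumoveredges \left( \edgepatchvarweight \frac{\penal^2\diffusivity^2}{\edgediam}
+
\edgepatchweight\linftys{ \conv }{\edge}^2 \right) \ltwos{\jump{\uh}}{\edge}^2 \bigg)^\half \ndgp{\test},
\]
and then invoke Lemma~\ref{lemma:bound_nonconforming_part} to bound $\bilinearahreac{\uh[d]}{\weight\test}$ by
\[
\bigg( \sumoveredges \bigg( \frac{\penal\diffusivity}{\edgediam}\big(\weightmax{\edgepatch} + \edgepatchvarweight\diffusivity  +\tfrac{\alpha^2\diffusivity\maxabs[\edge]{\grad\convgrad}^2}{\maxabs[\edge]{\weight}} \max_{\cell\in\edgepatch}\cellweight^2\big)
+ \edgediam\linftysw{\correctedltwofull}{\edgevertexpatch}+ \tfrac{\edgediam}{\diffusivity} \linftysw{\conv-\alpha\diffusivity\grad\convgrad}{\edgevertexpatch}^2 \bigg) \ltwos{\jump{\uh}}{\edge}^2 \bigg)^\half
\ndgp{\test}.
\]
Dividing through by $\ndgp{\test}$, taking the supremum over $\test$, and adding the two square-root quantities under a single square root (using $\sqrt{a}+\sqrt{b}\lesssim\sqrt{a+b}$) produces the stated estimator, after absorbing the $\edgepatchvarweight\penal^2\diffusivity^2/\edgediam$ term from Lemma~\ref{lemma:bound_projection_part} and the $\edgepatchvarweight\penal\diffusivity^2/\edgediam$ term from Lemma~\ref{lemma:bound_nonconforming_part} into the single $\frac{\penal\diffusivity}{\edgediam}\edgepatchvarweight\penal\diffusivity$ contribution appearing in the statement (the larger power $\penal^2$ dominates since $\penal\ge1$).

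The only genuinely delicate point is bookkeeping the various edge-patch weights so that the terms coming from the two lemmata combine into exactly the coefficient displayed in Lemma~\ref{lemma:conforming_bound}: one must check that $\linftysw{\correctedltwofull}{\edgevertexpatch}\lesssim \linftysw{\correctedltwo}{\edgevertexpatch}$ (which follows from \eqref{eqn:L_defn}--\eqref{eqn:M_defn} and \eqref{eqn:addreacchoice}, since $\correctedltwofull = 2\correctedltwo - \addreac \le 2\correctedltwo$), so that the $\correctedltwofull$ appearing in Lemma~\ref{lemma:bound_nonconforming_part} can be replaced by $\correctedltwo$ as written in the theorem, and that all constants hidden in $\lesssim$ depend only on shape-regularity and the polynomial degree. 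This is routine once one is careful, so I expect no real obstacle; the proof is essentially a one-line assembly, which is why the excerpt ends it with \qed and no displayed argument.
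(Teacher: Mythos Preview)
Your proposal is correct and follows exactly the paper's approach: the paper introduces Lemma~\ref{lemma:conforming_bound} with ``Collecting together the above developments immediately yields\ldots'' and ends with \qed, so the proof is precisely the assembly of the inf-sup bound, the displayed identity for $\bilinearareac{\u-\uh[c]}{\weight\test}$, and Lemmata~\ref{lemma:bound_projection_part} and~\ref{lemma:bound_nonconforming_part}, just as you describe. One small correction: the statement of Lemma~\ref{lemma:conforming_bound} retains $\correctedltwofull$, not $\correctedltwo$, so your final remark about replacing $\correctedltwofull$ by $\correctedltwo$ is unnecessary here (that step is deferred to Theorem~\ref{thm:stationary_estimate}).
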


Finally, combining \eqref{eqn:full_norm_decomposition} with Lemma~\ref{lemma:bound_nonconforming_norms} and Lemma~\ref{lemma:conforming_bound}, and noting that $\correctedltwofull \lesssim \correctedltwo$, we are able to establish an upper bound for the\aposteriori{} error estimator.

\begin{theorem}\label{thm:stationary_estimate}
Let $\u$ be the solution of \eqref{eqn:statconvdiffreac}--\eqref{eqn:statconvdiffreacneubdy} 
and $\uh$ its discontinuous Galerkin approximation, the solution of \eqref{eqn:dgproblem}.
Then, the following bound holds:
\begin{equation*}
\ndgp{\u-\uh} + \ndgpa{\u-\uh} \\
\lesssim
\bigg( \sumovercells \left( \interiorresidualsquared + \edgeresidualsquared+\edgejumpestimatorsquared \right) \bigg)^\half.
\end{equation*}\qed
\end{theorem}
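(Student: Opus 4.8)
The plan is to assemble the final estimate directly from the pieces already established, so the main work is bookkeeping rather than new analysis. The starting point is the triangle-inequality decomposition \eqref{eqn:full_norm_decomposition}, which splits $\ndgp{\u-\uh}+\ndgpa{\u-\uh}$ into the conforming error $\ndgp{\u-\uh[c]}+\ndgpa{\u-\uh[c]}$ and the nonconforming remainder $\ndgp{\uh[d]}+\ndgpa{\uh[d]}$. The second group is bounded by Lemma~\ref{lemma:bound_nonconforming_norms}, whose right-hand side is a sum over faces of $\big(\weightmax{\edge}\tfrac{\penal\diffusivity}{\edgediam}+\edgediam\linftysw{\correctedltwo}{\edgevertexpatch}+\tfrac{\weightmax{\edgevertexpatch}\edgediam}{\diffusivity}\linftys{\conv-\alpha\diffusivity\grad\convgrad}{\edgevertexpatch}^2\big)\ltwos{\jump{\uh}}{\edge}^2$; the first group is bounded by Lemma~\ref{lemma:conforming_bound}. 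So the proof is essentially: write down \eqref{eqn:full_norm_decomposition}, invoke these two lemmata, and check that the union of the two right-hand sides is dominated by $\sum_{\cell}(\interiorresidualsquared+\edgeresidualsquared+\edgejumpestimatorsquared)=\statestimator^2$ from Definition~\ref{def:apost_est_stationary}.

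The one substantive verification is that every face-term appearing in Lemmata~\ref{lemma:bound_nonconforming_norms} and~\ref{lemma:conforming_bound} is accounted for, up to a constant, by the face jump indicator $\edgejumpestimatorsquared$ (after summing the face contributions into the surrounding elements, using shape-regularity to control the bounded overlap of the patches $\edgepatch$ and $\edgevertexpatch$). Comparing term by term: the $\weightmax{\edgepatch}\tfrac{\penal\diffusivity}{\edgediam}$, $\edgepatchvarweight(\penal\diffusivity)^2/\edgediam$, and $\tfrac{\alpha^2\diffusivity\maxabs[\edge]{\grad\convgrad}^2}{\maxabs[\edge]{\weight}}\max_{\cell\in\edgepatch}\cellweight^2\cdot\tfrac{\penal\diffusivity}{\edgediam}$ contributions, the $\edgepatchweight\linftys{\conv}{\edge}^2$ contribution, and the $\edgediam\linftysw{\correctedltwofull}{\edgevertexpatch}$ and $\tfrac{\edgediam}{\diffusivity}\linftysw{\conv-\alpha\diffusivity\grad\convgrad}{\edgevertexpatch}^2$ contributions all match the corresponding summands in $\edgejumpestimatorsquared$; here one uses $\weightmax{\edge}\le\weightmax{\edgevertexpatch}$ and the observation stated just before the theorem that $\correctedltwofull\lesssim\correctedltwo$ to absorb the $\correctedltwofull$-terms of Lemma~\ref{lemma:bound_nonconforming_part} into the $\correctedltwo$-terms of the estimator. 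The $\interiorresidualsquared$ and $\edgeresidualsquared$ contributions pass through unchanged from Lemma~\ref{lemma:conforming_bound}.

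The main (and only mild) obstacle is purely organisational: the face-indexed sums in the lemmata must be regrouped as element-indexed sums as in the definition of $\statestimatorcell^2$, which is legitimate because each face belongs to at most two elements and, by shape-regularity, each patch $\edgepatch$, $\edgevertexpatch$ contains a uniformly bounded number of elements; this introduces only a constant hidden in ``$\lesssim$''. Once this identification is made, squaring the triangle inequality \eqref{eqn:full_norm_decomposition}, inserting the two lemmata, and collecting the common face terms yields
\[
\ndgp{\u-\uh}+\ndgpa{\u-\uh}\lesssim\Big(\sumovercells\big(\interiorresidualsquared+\edgeresidualsquared+\edgejumpestimatorsquared\big)\Big)^{\!\half},
\]
which is the claimed bound, and the proof concludes.
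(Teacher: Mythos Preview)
Your proposal is correct and matches the paper's own argument essentially line for line: the paper simply states that the theorem follows by combining \eqref{eqn:full_norm_decomposition} with Lemma~\ref{lemma:bound_nonconforming_norms} and Lemma~\ref{lemma:conforming_bound}, together with the observation $\correctedltwofull\lesssim\correctedltwo$. The additional bookkeeping you spell out (term-by-term matching with $\edgejumpestimatorsquared$, regrouping face sums into element sums via shape-regularity) is exactly the implicit content behind the paper's one-line proof.
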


\section{\emph{A posteriori} error analysis for the semi-discrete  method}\label{sec:apost_time}
Having proven an \aposteriori{} error bound on the stationary convection-diffusion-reaction 
equation in the above modified norm, we are ready to consider the non-stationary model convection-diffusion problem~\eqref{eqn:convdiffweak}. We shall do that in two steps: first, we derive an \aposteriori{} error bound for the semi-discrete method to highlight the issues  specific to the interior penalty dG discretisation, and then we will complete the analysis for the fully-discrete implicit Euler dG method.

For the proof of the  \aposteriori{} error bound, our strategy is to reframe it as a convection-diffusion-reaction 
problem by means of the observation that we may rewrite the equation
\begin{equation*}
\timederiv{\u} - \diffusivity\laplacian\u + \conv\cdot\grad\u 
= \heating, 
\end{equation*}
as
\begin{equation*}
\timederiv{\u} - \diffusivity\laplacian\u + \conv\cdot\grad\u + \addreac\u
= \heating + \addreac\u .
\end{equation*}
Then, using the elliptic reconstruction framework \cite{Makridakis2003, LM06, M07, Georgoulis2011, Bansch2012, Cangiani2013a,GM23}, and a Gr\"onwall 
inequality, we arrive at an error bounds upon converting the reaction term into an exponential factor in the final error bound.

We  consider the spatially discrete scheme: find $\uh \in C^{0,1}([0,T];V_h)$ such that 
\begin{align}\label{eqn:convdiffsemi-discrete_r}
\left({\timederiv{\uh}},{\testh}\right)
+	\bilinearahreac{\uh}{\testh}  
= 
\left(\heating+\addreac\uh,\testh\right)
\end{align}
for all $\testh \in \dgspace$, with $\uh(\starttime) = \ltwoproj[k]\initu$.

\begin{definition}
For each $t\in (0,T]$, the  \emph{elliptic reconstruction} of $\uh(t)$ is the unique $\semielliprecon\in\Honenonhomzero$, such that
\begin{equation}
\bilinearareac{\semielliprecon}{\test}  
= 
\left(\heating + \addreac\uh - \timederiv{\uh},\test\right) \quad\forall \test\in\Honenonhomzero.\label{eqn:semi_ellip_recon}
\end{equation}
\end{definition}
%
The interior penalty dG discretisation of the above elliptic reconstruction problem reads: find $\semiellipreconh\in V_h$, such that
\begin{equation*}
\bilinearahreac{\semiellipreconh}{\testh}  
= 
\left(\heating + \addreac\uh - \timederiv{\uh},\testh\right) \quad\forall \testh\in V_h.
\end{equation*}
Then, the uniqueness of the solution to the above problem and \eqref{eqn:convdiffsemi-discrete_r} implies that $\semiellipreconh = \uh$.
We can, therefore, apply the stationary case bound of 
Theorem \ref{thm:stationary_estimate}, to conclude that
\begin{multline}
\ndgp{\semielliprecon-\uh} + \ndgpa{\semielliprecon-\uh}   \\
\lesssim \sumovercells 
\left( \cellweight^2\ltwos{\heating - \timederiv{\uh}+ \diffusivity\laplacian\uh - \conv \cdot \nabla\uh}{\cell}^2 + \edgeresidualsquared\right)
+
\sumoveredges \edgejumpestimatorsquared.\label{eqn:semiidentity}
\end{multline}

We introduce the following splitting of the error $\error := \u-\uh$:
\begin{equation*}
\error = \confdiff + \dgdiff \quad \text{ with }\quad \confdiff \coloneqq \u-\semielliprecon, \quad \dgdiff \coloneqq \semielliprecon-\uh,
\end{equation*}
along with the extra notation $\errorconf \coloneqq \u- \KPapprox{\uh}$ and $\dgdiffconf \coloneqq \semielliprecon- \KPapprox{\uh}$, nothing that $\errorconf , \dgdiffconf\in\Honenonhomzero$.

\begin{theorem}\label{thm:semi-discrete-apost}
Let $\u$ be the solution of  \eqref{eqn:convdiff} and $\uh$ its semi-discrete approximation satisfying \eqref{eqn:convdiffsemi-discrete}. 
Then, we have the following \aposteriori{} error bound:
\begin{align*}
&\ltwosw{\error}{\extendedlinftyspace{\starttime}{\timevar}{\ltwospace}}^2 + \int_\starttime^{\timevar}  \ndgp{\error}^2 \ds\\
&\phantom{+}\lesssim
\exp\left(\int_{\starttime}^{\timevar} \max_{\domain}\frac{\addreac[2]}{\correctedltwo}(s) \ds\right)
\left( \ltwow{\error(\starttime)}^2
		+ \int_\starttime^{\timevar} \sdeSsub{1}^2 + \sdeSsub{2}^2 \ds + \max_{0\leq s\leq\timevar} \sdeSsub{3}^2 \right),
\end{align*}
whereby
\begin{align*}
\sdeSsub{1}^2 &\coloneqq  \sumovercells \cellweight^2\ltwos{\heating -\timederiv{\uh} + \diffusivity\laplacian\uh - \conv \cdot \nabla\uh}{\cell}^2
+  \sumoverinternaledges \edgepatchweight \ltwos{\jump{\diffusivity \nabla \uh}}{\edge}^2 
\\
&\ +
\sumoveredges \bigg(
\frac{\penal\diffusivity}{\edgediam}\bigg(\weightmax{\edgepatch} + \edgepatchvarweight\penal\diffusivity 
+ \frac{\alpha^2\diffusivity\maxabs[\edge]{\grad\convgrad}^2}{\maxabs[\edge]{\weight}} \max_{\cell\in\edgepatch}\cellweight^2\bigg) +
\edgepatchweight\linftys{ \conv }{\edge}^2 
\\
&\phantom{\coloneqq+\sumoveredges \left(\right.}+ \edgediam\linftysw{\correctedltwo}{\edgevertexpatch}
 +  \frac{\weightmax{\edgevertexpatch}\edgediam}{\diffusivity} \linftys{\conv-\alpha\diffusivity\grad\convgrad}{\edgevertexpatch}^2\bigg) \ltwos{\jump{\uh}}{\edge}^2 ,
\\
\sdeSsub{2}^2 &\coloneqq \sumoveredges \min\left\{\linftysw{\correctedltwo^{-\half}}{\edgevertexpatch}^2, \frac{\weightmax{\edgevertexpatch}}{\diffusivity}\right\} \edgediam \ltwos{\jump{\timederiv{\uh}}}{\edge}^2,
\\
\sdeSsub{3}^2 &\coloneqq \sumoveredges \weightmax{\edgevertexpatch} \edgediam \ltwos{\jump{\uh}}{\edge}^2.
\end{align*}
\end{theorem}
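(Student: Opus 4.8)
The plan is to combine the elliptic reconstruction error equation with an energy estimate for the "parabolic error" $\confdiff=\u-\semielliprecon$, and then absorb the residual contributions from $\dgdiff=\semielliprecon-\uh$ using the stationary bound \eqref{eqn:semiidentity}. First I would subtract the PDE \eqref{eqn:convdiffweak} from the definition \eqref{eqn:semi_ellip_recon} of the elliptic reconstruction to obtain, for all $\test\in\Honenonhomzero$,
\[
\left(\dtimederiv{\confdiff},\test\right)+\bilinearareac{\confdiff}{\test}
= \left(\dtimederiv{\u}-\dtimederiv{\semielliprecon},\test\right)+\bilinearareac{\u-\semielliprecon}{\test}
= \left(\addreac\confdiff,\test\right)-\left(\dtimederiv{\dgdiff},\test\right),
\]
after using $\u=\confdiff+\dgdiff$ and $\dtimederiv{\uh}$ cancellation; the key point is that $\dtimederiv{\dgdiff}=\dtimederiv{\semielliprecon}-\dtimederiv{\uh}$ appears, not $\dtimederiv{\semielliprecon}$ alone. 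Then I would test with $\test=\weight\confdiff$ (legitimate since $\weight\in\sobolevspace{1}{\infty}$ and $\confdiff\in\Honenonhomzero$), use the coercivity identity of Lemma~\ref{lemma:A_coercivity} to get $\bilinearareac{\confdiff}{\weight\confdiff}=\ndgp{\confdiff}^2$, and handle the time-derivative term via $\left(\dtimederiv{\confdiff},\weight\confdiff\right)=\frac12\dtimederiv{}\ltwow{\confdiff}^2$.

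Next I would bound the right-hand side. The artificial-reaction term is controlled by writing $\left(\addreac\confdiff,\weight\confdiff\right)=\int_\domain\weight(\addreac/\sqrt{\correctedltwo})\sqrt{\correctedltwo}\confdiff\,\confdiff\,\dx\le\big(\max_\domain\addreac^2/\correctedltwo\big)^{1/2}\ltwosw{\sqrt{\correctedltwo}\confdiff}{}\ltwow{\confdiff}\le\frac12\max_\domain(\addreac^2/\correctedltwo)\ltwow{\confdiff}^2+\frac12\ndgp{\confdiff}^2$, where the last $\ndgp{\confdiff}^2$ half is absorbed into the coercivity term; this is exactly the source of the Gr\"onwall exponent $\exp(\int\max_\domain\addreac^2/\correctedltwo)$ in the statement. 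The term $\left(\dtimederiv{\dgdiff},\weight\confdiff\right)$ is the delicate one: I would \emph{not} estimate it by $\ndgp{\dtimederiv{\dgdiff}}\ndgp{\confdiff}$ directly, since $\dtimederiv{\dgdiff}$ is only piecewise smooth; instead I would split $\dtimederiv{\dgdiff}=(\dtimederiv{\semielliprecon}-\dtimederiv{\KPapprox{\uh}})+(\dtimederiv{\KPapprox{\uh}}-\dtimederiv{\uh})=\dtimederiv{\dgdiffconf}+(\dtimederiv{\KPapprox{\uh}}-\dtimederiv{\uh})$. The conforming part $\dtimederiv{\dgdiffconf}$ can be paired with $\weight\confdiff$ and controlled by differentiating \eqref{eqn:semiidentity} in time (so the residuals involve $\dtimederiv{\uh}$) together with $\ndgp{\confdiff}$, Young's inequality again absorbing a fraction into coercivity; this produces the $\sdeSsub{1}$ contributions. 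The nonconforming remainder $\dtimederiv{\KPapprox{\uh}}-\dtimederiv{\uh}=\dtimederiv{}(\uh[c]-\uh)=-\dtimederiv{\uh[d]}$ is handled by Theorem~\ref{thm:KarakashianPascal} applied to $\dtimederiv{\uh}$ (whose jumps are $\jump{\dtimederiv{\uh}}$), yielding the $\sdeSsub{2}$ term with the $\min\{\linftysw{\correctedltwo^{-1/2}}{\edgevertexpatch}^2,\weightmax{\edgevertexpatch}/\diffusivity\}$ weight coming from pairing either against the $\correctedltwo$-mass part or the diffusion part of $\ndgp{\confdiff}$.

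After these estimates I would have a differential inequality of the form $\frac{d}{dt}\ltwow{\confdiff}^2+\ndgp{\confdiff}^2\lesssim \max_\domain(\addreac^2/\correctedltwo)\ltwow{\confdiff}^2+\sdeSsub{1}^2+\sdeSsub{2}^2$, to which I apply Gr\"onwall's lemma over $[\starttime,\timevar]$, producing the exponential prefactor and the $\int_\starttime^\timevar(\sdeSsub{1}^2+\sdeSsub{2}^2)$ term, plus the initial term $\ltwow{\confdiff(\starttime)}^2$. Finally I would reconstitute the full error by the triangle inequality $\error=\confdiff+\dgdiff$: the $\ndgp{\dgdiff}$ and $\ndgpa{\dgdiff}$ contributions are bounded pointwise in time by \eqref{eqn:semiidentity} (which are part of $\sdeSsub{1}^2$, so they integrate fine), the $\ltwow{\dgdiff}$ contribution in the $L^\infty(L^2)$ norm is bounded by $\ltwosw{\sqrt{\correctedltwo}\dgdiff}{}$ up to the $\correctedltwo^{-1/2}$ factor — giving the $\max_{0\le s\le t}\sdeSsub{3}^2$ term via Lemma~\ref{lemma:bound_nonconforming_norms}-type estimates on $\jump{\uh}$ — and the initial error $\error(\starttime)$ is split using $\dgdiff(\starttime)$ which again reduces to jump terms of $\uh(\starttime)=\ltwoproj[k]\initu$. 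The main obstacle I anticipate is the careful treatment of $\left(\dtimederiv{\dgdiff},\weight\confdiff\right)$: one must resist the temptation to integrate by parts in time (which would spoil the $L^\infty(L^2)$ structure) and instead exploit the conforming/nonconforming splitting of $\dgdiff$ so that the conforming part is absorbed into a time-differentiated stationary estimate and the nonconforming part is handled purely by the Karakashian–Pascal bound applied to the jumps of $\dtimederiv{\uh}$, matching the precise weights appearing in $\sdeSsub{2}$.
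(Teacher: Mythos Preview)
Your overall strategy (elliptic reconstruction, energy argument, Gr\"onwall) is right, but there is a genuine gap in the step you yourself flag as the main obstacle. By testing with $\weight\confdiff$ you obtain an energy equation for $\confdiff$, and this forces you to control $(\dtimederiv{\dgdiff},\weight\confdiff)$. Your proposed treatment of the conforming part $(\dtimederiv{\dgdiffconf},\weight\confdiff)$ via ``differentiating \eqref{eqn:semiidentity} in time'' does not yield $\sdeSsub{1}$. The stationary bound \eqref{eqn:semiidentity} already contains $\timederiv{\uh}$ in its interior residual (because $\semielliprecon$ solves an elliptic problem with right-hand side $\heating+\addreac\uh-\timederiv{\uh}$); differentiating it in time would produce residuals involving $\partial_t^2\uh$, $\timederiv{\heating}$, and $\timederiv{\conv}$, none of which appear in $\sdeSsub{1}$. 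There is no way to estimate $\dtimederiv{\dgdiffconf}=\dtimederiv{\semielliprecon}-\dtimederiv{\KPapprox{\uh}}$ in any useful norm without information on $\dtimederiv{\semielliprecon}$, and that information is not encoded in the stated estimators. (A minor related slip: your error identity drops a $(\addreac\dgdiff,\test)$ term; the correct equation is $(\dtimederiv{\error},\test)+\bilinearareac{\confdiff}{\test}=(\addreac\error,\test)$, not $(\addreac\confdiff,\test)$.)

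The paper avoids this difficulty by a different choice of test function: it tests the identity $(\dtimederiv{\error},\weight\test)+\bilinearareac{\confdiff}{\weight\test}=(\addreac\error,\weight\test)$ with $\test=\errorconf\coloneqq \u-\KPapprox{\uh}\in\Honenonhomzero$ rather than with $\confdiff$. Writing $\error=\errorconf-\uh[d]$ and $\confdiff=\errorconf-\dgdiffconf$, the time-derivative splits as $(\dtimederiv{\errorconf},\weight\errorconf)-(\dtimederiv{\uh[d]},\weight\errorconf)$, so the only ``bad'' time-derivative term is $\dtimederiv{\uh[d]}$, which is handled by Theorem~\ref{thm:KarakashianPascal} on the jumps of $\timederiv{\uh}$ and gives $\sdeSsub{2}$. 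Meanwhile the bilinear form splits as $\bilinearareac{\errorconf}{\weight\errorconf}-\bilinearareac{\dgdiffconf}{\weight\errorconf}$; the first gives coercivity, and the second is bounded by the continuity estimate of Lemma~\ref{lemma:A_coercivity} in terms of $\ndgp{\dgdiffconf}+\ndgpa{\dgdiffconf}$, which is controlled by the \emph{undifferentiated} stationary bound \eqref{eqn:semiidentity}, producing exactly $\sdeSsub{1}$. In short, the correct decoupling is: the time-derivative side sees only $\uh[d]$, the bilinear-form side sees only $\dgdiffconf$; your choice of test function entangles these and forces $\dtimederiv{\dgdiffconf}$ into the estimate, which cannot be bounded by the stated estimators.
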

\begin{proof}
We begin by observing that $\u$ satisfies
\begin{equation*}
\innerprod{\timederiv{\u}}{\weight\test} + \bilinearareac{\u}{\weight\test} = \innerprod{\heating + \addreac\u}{\weight\test} \quad \forall\test\in\Honenonhomzero,
\end{equation*}
so, upon rearrangement and recalling \eqref{eqn:semi_ellip_recon}, we can show that 
\begin{equation*}
\innerprod{\timederiv{\error}}{\weight\test} + \bilinearareac{\confdiff}{\weight\test} = \innerprod{\addreac\error}{\weight\test}  \quad \forall\test\in\Honenonhomzero.
\end{equation*}
Testing with $\test=\errorconf$, and noting that $\error=\errorconf-\uh[d]$ and $\confdiff=\errorconf-\dgdiffconf$, gives
\begin{equation*}
\innerprod{\timederiv{\errorconf}}{\weight\errorconf} + \bilinearareac{\errorconf}{\weight\errorconf} = \innerprod{\timederiv{\uh[d]}}{\weight\errorconf} + \bilinearareac{\dgdiffconf}{\weight\errorconf} + \innerprod{\addreac\error}{\weight\errorconf}  .
\end{equation*}
In the following, we note that in the case of constant $\convgrad$ and $\addreac=0$, we  have $\correctedltwo=0$. In this case, the result carries through in the natural way, 
resulting in a bound on the quantity
\begin{equation*}
\ltwosw{\error}{\extendedlinftyspace{\starttime}{\timevar}{\ltwospace}}^2 + \int_\starttime^{\timevar}  \ndgp{\error}^2 \ds,
\end{equation*}
with the $\ndgp{\cdot}$ norm containing only an $H^1$ term.

By the Cauchy-Schwarz inequality, Poincare-Friedrichs inequality, and the coercivity and continuity of $\bilinearareac{\cdot}{\cdot}$ from Lemma~\ref{lemma:A_coercivity},
\begin{multline*}
\timederiv{\left(\ltwow{\errorconf}^2\right)} + \ndgp{\errorconf}^2 
\lesssim 
\min\left\{\ltwow{\correctedltwo^{-\half}\timederiv{\left(\uh[d]\right)}}, \ltwow{\diffusivity^{-\half}\timederiv{\left(\uh[d]\right)}} \right\}\ndgp{\errorconf}\\
+ \left( \ndgp{\dgdiffconf} + \ndgpa{\dgdiffconf} \right)\ndgp{\errorconf} 
+ \ltwow{\frac{\addreac}{\sqrt{\correctedltwo}}\error}\ltwow{\sqrt{\correctedltwo}\errorconf}.
\end{multline*}
Using Young's inequality, we arrive to
\begin{align*}
\timederiv{\left(\ltwow{\errorconf}^2\right)} + \ndgp{\errorconf}^2 
&\lesssim 
\left( \ndgp{\dgdiffconf} + \ndgpa{\dgdiffconf} \right)^2\\
&\quad +\min\left\{\ltwow{\correctedltwo^{-\half}\timederiv{\left(\uh[d]\right)}}, \ltwow{\diffusivity^{-\half}\timederiv{\left(\uh[d]\right)}} \right\}^2
+ \ltwow{\frac{\addreac}{\sqrt{\correctedltwo}}\error}^2.
\end{align*}
Thus, by the triangle inequality,
\begin{align*}
\timederiv{\left(\ltwow{\error}^2\right)} + \ndgp{\error}^2 
&\lesssim 
\left( \ndgp{\dgdiff} + \ndgpa{\dgdiff} \right)^2
+\min\left\{\ltwow{\correctedltwo^{-\half}\timederiv{\left(\uh[d]\right)}}, \ltwow{\diffusivity^{-\half}\timederiv{\left(\uh[d]\right)}} \right\}^2\\
&\phantom{\lesssim} + \ltwow{\frac{\addreac}{\sqrt{\correctedltwo}}\error}^2 + \timederiv{\left(\ltwow{\uh[d]}^2\right)}
+ \ndgp{\uh[d]}^2
+ \ndgpa{\uh[d]}^2.
\end{align*}
Using Gr\"onwall's Lemma (see, e.g.,  
\cite[Appendix B, p.624]{evans1998partial} for a convenient reference) 
we have that, for $\timevar\in\timeinterval$,
\begin{align*}
&\ltwosw{\error}{\extendedlinftyspace{\starttime}{\timevar}{\ltwospace}}^2 + \int_\starttime^{\timevar}  \ndgp{\error}^2 \ds\\
&\phantom{+}\lesssim
\exp\left(\int_{\starttime}^{\timevar} \max_{\domain}\frac{\addreac[2]}{\correctedltwo}(s) \ds\right)
\Big( \ltwow{\error(\starttime)}^2 
		+ \int_\starttime^{\timevar} \left( \ndgp{\dgdiff} + \ndgpa{\dgdiff} \right)^2 \ds\\
		&\qquad\qquad\qquad\qquad\qquad\quad+ \int_\starttime^{\timevar} +\min\left\{\ltwow{\correctedltwo^{-\half}\timederiv{\left(\uh[d]\right)}}, \ltwow{\diffusivity^{-\half}\timederiv{\left(\uh[d]\right)}} \right\}^2
		\\&\qquad\qquad\qquad\qquad\qquad\quad+\ltwosw{\uh[d]}{\extendedlinftyspace{\starttime}{\timevar}{\ltwospace}}^2 + \ndgp{\uh[d]}^2 + \ndgpa{\uh[d]}^2\ds 
\Big).
\end{align*}
Finally, using \eqref{eqn:semiidentity}, Theorem~\ref{thm:stationary_estimate}, 
Theorem~\ref{thm:KarakashianPascal}, and Lemma \ref{lemma:bound_nonconforming_norms}, the result follows. \qed
\end{proof}

\section{A posteriori error analysis for the fully-discrete  scheme}\label{sec:apost_fully}

We can now discuss the analogous bound for the fully discrete problem.

Once again, we start by reformulating the fully-descrete problem~\eqref{eqn:convdifffullydiscrete} as a convection-diffusion-reaction problem letting $\uh[n]\in\tdgspace[n]$, $n=0,\ldots,\ntimesteps$, satisfy 
\begin{align}\label{eqn:discrete_defn}
\innerprod{\frac{\uh[n]-\uh[n-1]}{\timestep{n}}}{\testh[n]} 
		+ \bilinearahreac{\uh[n]}{\testh[n]} 
= 
\innerprod{\heating[n] + \addreac[n]\uh[n]}{\testh[n]} \quad \forall \testh[n]\in\tdgspace[n],
\end{align}
with $\uh[0] = \ltwoproj[k]^0\initu$. We note that the dependence of the bilinear form $\bilinearahreac{\cdot}{\cdot} $ on the $n$-th mesh is suppressed for brevity, but it is taken into account in what follows. 
We then define $\Ak[n] \in \tdgspace[n]$, $n\geq 1$ to be the Riesz representer defined as
\begin{equation*}
\innerprod{\Ak[n]}{\testh[n]}= \bilinearahreac{\uh[n]}{\testh[n]}  \quad \forall \testh[n]\in\tdgspace[n]
,
\end{equation*}
noting that, from the method~\eqref{eqn:discrete_defn} it follows that
\begin{equation}
\Ak[n] = \ltwoproj[k]^{n}\left(\heating[n]+\addreac[n]\uh[n]\right) - \left(\uh[n]-\ltwoproj[k]^{n}\uh[n-1]\right)/\timestep{n}.\label{eqn:Ak_equality}
\end{equation}
\begin{definition}
The \emph{elliptic reconstruction} of $\uh[n]$, $n=1,\dots, \ntimesteps$, is the unique  $\ellrecon[n]\in\Honenonhomzero$ such that
\begin{equation*}\label{eqn:disc_Ak}
\bilinearareac{\ellrecon[n]}{\test} 
= 
\innerprod{\Ak[n]}{\test} \quad \forall \test\in\Honenonhomzero.
\end{equation*}
\end{definition}

We extend continuously in time the discrete solution $\uh[n]$ by linear interpolation on each time-interval, setting
\begin{equation*}
\uh(\timevar) \coloneqq \lbasis{n}(\timevar)\uh[n] + \lbasis{n-1}(\timevar)\uh[n-1],
\end{equation*}
on each interval $[\timek[n-1],\timek[n]]\ni t$, $n=1,\dots, \ntimesteps$, where $\{\lbasis{n-1},\lbasis{n}\}$ is the standard 
linear Lagrange basis on $[\timek[n-1],\timek[n]]$. We similarly extend
the definition of the elliptic reconstruction $\ellrecon[n]$ linearly and thus, as in the semi-discrete case, we deecompose the error $\error := \u-\uh$ as
\[
\error = \confdiff + \dgdiff \quad \text{ with }\quad \confdiff \coloneqq \u-\semielliprecon, \quad \dgdiff \coloneqq \semielliprecon-\uh.
\]

\begin{theorem}
Let  $\u$ be the solution of  
\eqref{eqn:convdiff}, and $\uh$ its dG approximation satisfying \eqref{eqn:discrete_defn}. 
Then, we have the \aposteriori{}  bound on the error $\error := \u-\uh$:
\begin{align}\label{eqn:fullydiscreteapostbound}
\begin{split}
&\ltwosw{\error}{\extendedlinftyspace{\starttime}{\finaltime}{\ltwospace}}^2 + \int_\starttime^{\finaltime}
\ndgp{\error}^2 \ds\\
&\phantom{+}\lesssim
\exp\left(\int_{\starttime}^{\finaltime}
\max_{\domain}\frac{\addreac[2]}{\correctedltwo}(s) \ds\right)\\
&\phantom{+}\qquad\left( \ltwow{\error(\starttime)}^2
		+ \sum_{n=1}^{\ntimesteps}  \int_{\timek[n-1]}^{\timek[n]} \big(\deSsub{1}{n}^2 + \deSsub{1}{n-1}^2 + \deSsub{2}{n}^2 + \deSsub{4}{n}^2\big) \ds \right.\\
		&\phantom{+}\qquad\qquad\qquad+\left. \sum_{n=1}^{\ntimesteps} \int_{\timek[n-1]}^{\timek[n]}  \deTsub{1}{n}^2 +  \deTsub{2}{n}^2 \ds 
		+ \max_{0\leq n\leq\ntimesteps} \deSsub{3}{n}^2 \right),
\end{split}
\end{align}
whereby, for $n\geq 1$, 
\begin{align*}
\deSsub{1}{n}^2 &\coloneqq  \sumovercellsk{n} \cellweight^2 \ltwos{\Ak[n] + \diffusivity\laplacian\uh[n] - \conv[n] \cdot \nabla\uh[n] -\addreac[n]\uh[n]}{\cell}^2
+  \sumoverinternaledgesk{n} \edgepatchweight \ltwos{\jump{\diffusivity \nabla \uh[n]}}{\edge}^2 
\\
&\phantom{\coloneqq}+
\sumoveredgesk{n} \bigg(
\frac{\penal\diffusivity}{\edgediam}\bigg(\weightmax{\edgepatch} + \edgepatchvarweight\penal\diffusivity 
+  \frac{\alpha^2\diffusivity\maxabs[\edge]{\grad\convgrad}^2}{\maxabs[\edge]{\weight}} \max_{\cell\in\edgepatch}\cellweight^2\bigg) +
\edgepatchweight\linftys{ \conv }{\edge}^2 
\\
&\phantom{\coloneqq+\sumoveredges \left(\right.}+ \edgediam\linftysw{\correctedltwo}{\edgevertexpatch}
 +  \frac{\weightmax{\edgevertexpatch}\edgediam}{\diffusivity} \linftys{\conv-\alpha\diffusivity\grad\convgrad}{\edgevertexpatch}^2\bigg) \ltwos{\jump{\uh[n]}}{\edge}^2 ,
\\
\deSsub{2}{n}^2 &\coloneqq \sumovercellskunion{n} \cellweight^2 \ltwos{\left(\identity-\ltwoproj[n]\right) 
																	     \left(\heating[n] + \addreac[n]\uh[n] + \frac{\uh[n-1]}{\timestep{n}}\right)
																	   }{\cell}^2,
\\
\deSsub{3}{n}^2 &\coloneqq \sumoveredgesk{n} \weightmax{\edgevertexpatch} \edgediam \ltwos{\jump{\uh[n]}}{\edge}^2,
\\
\deSsub{4}{n}^2 &\coloneqq \sumoveredgeskunion{n} \min\left\{\linftysw{\correctedltwo^{-\half}}{\edgevertexpatch}^2, \weightmax{\edgevertexpatch}\diffusivity^{-1}\right\} \edgediam \ltwos{\jump{\frac{\uh[n]-\uh[n-1]}{\timestep{n}}}}{\edge}^2,
\\
\deTsub{1}{n}^2 &\coloneqq \sumovercellskunion{n} \diffusivity^{-1} \ltwosw{\lbasis{n}\left(\conv[n]-\conv\right)\uh[n] 
																			 + \lbasis{n-1}\left(\conv[n-1]-\conv\right)\uh[n-1]
																			}{\cell}^2,
\\
\deTsub{2}{n}^2 &\coloneqq \sumovercellskunion{n}  \ltwoswleft{\min\left\{\correctedltwo^{-\half},\diffusivity^{-\half}\right\}\left(\heating -\heating[n] +\addreac\uh-\addreac[n]\uh[n] + \lbasis{n-1}\left(\Ak[n]-\Ak[n-1]\right)\right.} \\
&\qquad\qquad\qquad\quad															\ltwoswright{\left.+\lbasis{n}\longestimatorterm{n}\uh[n] + \lbasis{n-1}\longestimatorterm{n-1}\uh[n-1] \right)
														  }{\cell}^2,
\end{align*}
where $\longestimatorterm{n} \coloneqq \longestimatortermwobrackets{n}.$
\end{theorem}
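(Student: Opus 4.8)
The plan is to follow the semi-discrete argument of Theorem~\ref{thm:semi-discrete-apost}, the only genuinely new work being the bookkeeping of the time-stepping and of the mesh changes $\tria[n-1]\to\tria[n]$. We keep the splitting $\error=\confdiff+\dgdiff$ with $\confdiff=\u-\semielliprecon$ and $\dgdiff=\semielliprecon-\uh$, where $\uh$ and $\semielliprecon$ are the piecewise-linear-in-time interpolants of $\uh[n]$ and $\ellrecon[n]$. The reconstruction error $\dgdiff$ is controlled nodally: by \eqref{eqn:discrete_defn} and \eqref{eqn:Ak_equality}, $\uh[n]$ is exactly the interior penalty dG solution on $\tria[n]$ of the elliptic-reconstruction problem with right-hand side $\Ak[n]$, so Theorem~\ref{thm:stationary_estimate}, applied with $\heating$ replaced by $\Ak[n]$, bounds $\ndgp{\dgdiff(\timek[n])}+\ndgpa{\dgdiff(\timek[n])}$ by the computable quantities assembled in $\deSsub{1}{n}$; integrating over $[\timek[n-1],\timek[n]]$ and using time-linearity brings in both $\deSsub{1}{n}$ and $\deSsub{1}{n-1}$. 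As in the semi-discrete case, the nonconforming remainder $\uh[d]:=\uh-\KPapprox{\uh}$ is handled via Theorem~\ref{thm:KarakashianPascal} and Lemma~\ref{lemma:bound_nonconforming_norms}, giving $\ndgp{\uh[d]}+\ndgpa{\uh[d]}\lesssim\deSsub{3}{n}$ on each slab; since here $\timederiv{\uh}$ equals the constant difference quotient $(\uh[n]-\uh[n-1])/\timestep{n}$ on $(\timek[n-1],\timek[n])$, the term $\timederiv{\left(\uh[d]\right)}$ appearing below is controlled, again by Theorem~\ref{thm:KarakashianPascal}, by the jump $\jump{(\uh[n]-\uh[n-1])/\timestep{n}}$ over $\tria[n]\cup\tria[n-1]$, i.e.\ by $\deSsub{4}{n}$.

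The heart of the argument is the error identity for $\confdiff$. Subtracting the (time-interpolated) definition of the elliptic reconstruction from the weighted weak form \eqref{eqn:convdiffweak} tested against $\weight\test$, adding and subtracting $\addreac\u$, and using \eqref{eqn:Ak_equality} together with $\timederiv{\uh}=(\uh[n]-\uh[n-1])/\timestep{n}$ to eliminate $\timederiv{\uh}$, one arrives at, for $t\in(\timek[n-1],\timek[n])$ and all $\test\in\Honenonhomzero$,
\begin{align*}
\innerprod{\timederiv{\error}}{\weight\test}+\bilinearareac{\confdiff}{\weight\test}
&= \innerprod{\addreac\error}{\weight\test} + \innerprod{\mathcal{R}_n}{\weight\test},
\end{align*}
where the data residual $\mathcal{R}_n$ collects three families: the mesh-transfer/projection term $(\identity-\ltwoproj[n])\bigl(\heating[n]+\addreac[n]\uh[n]+\uh[n-1]/\timestep{n}\bigr)$, which, being $L^2$-orthogonal to $\tdgspace[n]$, is effectively paired only with $(\identity-\ltwoproj[n])(\weight\test)$ and so, by \eqref{eqn:global_ltwo}, contributes $\lesssim\deSsub{2}{n}\,\ndgp{\test}$; the data- and $\Ak$-increment terms $\heating-\heating[n]$, $\addreac\uh-\addreac[n]\uh[n]$, $\lbasis{n-1}(\Ak[n]-\Ak[n-1])$ and the divergence/weight corrections $\lbasis{n}\longestimatorterm{n}\uh[n]+\lbasis{n-1}\longestimatorterm{n-1}\uh[n-1]$, all zeroth order and hence bounded by the weighted $L^2$-part of $\ndgp{\cdot}$, contributing $\lesssim\deTsub{2}{n}\,\ndgp{\test}$; and the convective coefficient-freezing term $\lbasis{n}(\conv[n]-\conv)\uh[n]+\lbasis{n-1}(\conv[n-1]-\conv)\uh[n-1]$, bounded via \eqref{eqn:Hminusone_bound} in the $\ndgpstar{\cdot}$ semi-norm, contributing $\lesssim\deTsub{1}{n}\,\ndgp{\test}$.

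We then test the identity with $\test=\errorconf:=\u-\KPapprox{\uh}\in\Honenonhomzero$, use $\error=\errorconf-\uh[d]$ and $\confdiff=\errorconf-\dgdiffconf$, and invoke coercivity and continuity of $\bilinearareac{\cdot}{\weight\cdot}$ (Lemma~\ref{lemma:A_coercivity}) together with Cauchy--Schwarz and Young's inequality. The only right-hand term carrying the factor $\addreac[2]/\correctedltwo$ is $\ltwow{\sqrt{\correctedltwo}\error}^2$; all others are $(\ndgp{\dgdiff}+\ndgpa{\dgdiff})^2$, $\timederiv{\left(\ltwow{\uh[d]}^2\right)}$, $\ndgp{\uh[d]}^2+\ndgpa{\uh[d]}^2$, $\min\{\ltwow{\correctedltwo^{-\half}\timederiv{\left(\uh[d]\right)}},\ltwow{\diffusivity^{-\half}\timederiv{\left(\uh[d]\right)}}\}^2$ and $(\deSsub{2}{n}+\deTsub{1}{n}+\deTsub{2}{n})^2$. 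Integrating over each $[\timek[n-1],\timek[n]]$, summing over $n=1,\dots,\ntimesteps$ and applying Gr\"onwall's lemma exactly as in Theorem~\ref{thm:semi-discrete-apost} produces the factor $\exp\bigl(\int_\starttime^\finaltime\max_\domain(\addreac[2]/\correctedltwo)\bigr)$, while the time integral of $\timederiv{\left(\ltwow{\uh[d]}^2\right)}$ is controlled by $\max_{0\le n\le\ntimesteps}\deSsub{3}{n}^2$. Substituting the bounds of the previous paragraphs (and using $\correctedltwofull\lesssim\correctedltwo$) assembles the right-hand side into the stated combination of $\deSsub{\cdot}{\cdot}$ and $\deTsub{\cdot}{\cdot}$ terms; the degenerate case $\correctedltwo=0$, i.e.\ $\convgrad$ constant and $\addreac=0$, is treated as in the semi-discrete proof with the $H^1$-part of $\ndgp{\cdot}$ only. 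I expect the main obstacle to be not any single estimate but the precise accounting inside $\mathcal{R}_n$ across a mesh change---deciding which $L^2$-projection enters each term, and checking that the resulting residuals match, term by term, the computable quantities $\deSsub{2}{n}$, $\deSsub{4}{n}$, $\deTsub{1}{n}$ and $\deTsub{2}{n}$; once this is pinned down, each bound is a routine transcription of the stationary analysis of Theorem~\ref{thm:stationary_estimate}.
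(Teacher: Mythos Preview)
Your proposal is correct and follows essentially the same route as the paper's proof: the elliptic reconstruction is used nodally so that $\uh[n]$ is the dG approximation of $\ellrecon[n]$ and Theorem~\ref{thm:stationary_estimate} yields $\deSsub{1}{n}$; the error identity is derived, tested against $\errorconf$, and the coercivity/continuity of Lemma~\ref{lemma:A_coercivity} together with Gr\"onwall produce the exponential factor and the remaining estimator terms. The paper organises the identity slightly differently---it keeps $\bilinearareac{\error}{\weight\test}$ on the left and splits the right-hand side explicitly into $A_1,\dots,A_4$ (with $A_4$ carrying the $\dgdiff$ contribution), whereas you keep $\bilinearareac{\confdiff}{\weight\test}$ on the left and recover the $\dgdiff$ contribution only after testing via $\confdiff=\errorconf-\dgdiffconf$---but the two arrangements are algebraically equivalent.

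One small mislabelling: you write ``$\ndgp{\uh[d]}+\ndgpa{\uh[d]}\lesssim\deSsub{3}{n}$'', but $\deSsub{3}{n}$ carries only the weight $\weightmax{\edgevertexpatch}\edgediam$ and bounds $\ltwow{\uh[d]}^2$ (hence controls $\max_n\ltwow{\uh[n,d]}^2$ after integrating $\timederiv{\ltwow{\uh[d]}^2}$, as you correctly say later). The full $\ndgp{\uh[d]}^2+\ndgpa{\uh[d]}^2$ bound from Lemma~\ref{lemma:bound_nonconforming_norms} involves the larger jump coefficients and is absorbed into $\deSsub{1}{n}$, as the paper does in the last step of its display \eqref{eqn:discrete_error_pre_integration}. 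This does not affect your argument's validity, only the attribution of terms.
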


\begin{proof}
By rearrangement we can show that for $\test\in\Honenonhomzero$ and $\timevar\in(\timek[n-1],\timek[n]]$,
\begin{equation}\label{eqn:discrete_est_error}
\begin{split}
&\innerprod{\timederiv{\error}}{\weight\test} + \bilinearareac{\error}{\weight\test}\\
=&\ 
\innerprod{\timederiv{\u}}{\weight\test} - \innerprod{\timederiv{\uh}}{\weight\test} + \bilinearareac{\u}{\weight\test} - \bilinearareac{\uh}{\weight\test} \\
=&\  \innerprod{\heating -\heating[n] +\addreac\u-\addreac[n]\uh[n]}{\weight\test} + \innerprod{\heating[n] +\addreac[n]\uh[n]- \timederiv{\uh} - \Ak[n]}{\weight\test} \\
	& + \bilinearareac{\dgdiff[n]}{\weight\test} 
	+ \bilinearareac{\uh[n]}{\weight\test} - \bilinearareac{\uh}{\weight\test}\\
=&\  \innerprod{\heating[n] + \addreac[n]\uh[n]- \timederiv{\uh} - \Ak[n]}{\weight\test} \\
	&\ + \innerprod{\heating -\heating[n] +\addreac\uh-\addreac[n]\uh[n] + \lbasis{n-1}\left(\Ak[n]-\Ak[n-1]\right)}{\weight\test}\\
	&\ +\big( \lbasis{n}\bilinearareac{\uh[n]}{\weight\test} + \lbasis{n-1}\bilinearareac{\uh[n-1]}{\weight\test} - \bilinearareac{\uh}{\weight\test} \big)\\
    &\ +\big( \lbasis{n}\bilinearareac{\dgdiff[n]}{\weight\test} + \lbasis{n-1}\bilinearareac{\dgdiff[n-1]}{\weight\test} \big)+ \innerprod{\addreac\error}{\weight\test}\\
    =:&\  A_1+A_2+A_3+A_4+ \innerprod{\addreac\error}{\weight\test}.
    \end{split}
\end{equation}

By using \eqref{eqn:Ak_equality} and the property \eqref{eqn:local_ltwo} we have
\begin{align*}
A_1
&= 
\innerprod{\heating[n] +\addreac[n]\uh[n] - \timederiv{\uh} - \Ak[n]}{\left(\identity-\ltwoproj[k]^n\right)(\weight\test)} 
\lesssim
\deSsub{2}{n}
\ndgp{\test}.
\end{align*}
Also, we have
\begin{equation*}
\begin{aligned}
A_2+A_3
=  &\ 
\innerprod{\heating -\heating[n] +\addreac\uh-\addreac[n]\uh[n] + \lbasis{n-1}\left(\Ak[n]-\Ak[n-1]\right)}{\weight\test}\\
&\ +
\lbasis{n}\innerprod{\longestimatorterm{n}\uh[n] }{\weight\test}
 +
\lbasis{n-1}\innerprod{\longestimatorterm{n-1}\uh[n-1] }{\weight\test}\\
&\ -\innerprod{\lbasis{n}\left(\conv[n]-\conv\right)\uh[n] 
 + \lbasis{n-1}\left(\conv[n-1]-\conv\right)\uh[n-1]}{\weight\grad\test} \\
 \lesssim&\  \deTsub{2}{n} \ndgp{\test} + \deTsub{1}{n} \ndgp{\test}.
\end{aligned}
\end{equation*}
In a similar fashion to the semi-discrete case, by Lemma \ref{lemma:A_coercivity}, we have 
\begin{align*}
&\lbasis{n}\bilinearareac{\dgdiff[n]}{\weight\test} + \lbasis{n-1}\bilinearareac{\dgdiff[n-1]}{\weight\test}\\
&\lesssim \lbasis{n}^2\left(\ndgp{\dgdiff[n]} + \ndgpa{\dgdiff[n]}\right)^2 + \lbasis{n-1}^2\left(\ndgp{\dgdiff[n-1]} + \ndgpa{\dgdiff[n-1]}\right)^2 + \ndgp{\test}^2\\
&\lesssim \lbasis{n}^2\deSsub{1}{n}^2 + \lbasis{n-1}^2\deSsub{1}{n-1}^2 + \ndgp{\test}^2
. 
\end{align*}
Once again the dG solution $\uh[n]$ may be decomposed into its conforming and 
nonconforming parts, $\uh[n,c]\in\Honenonhomzero\intersect\tdgspace[n]$ and $\uh[n,d]\in\tdgspace[n]$,  with $\uh[n,c] = \KPapprox{\uh[n]  } \in \dgspace[c]$ and $\uh[n,d]=\uh[n]-\uh[n,c]$, respectively. Returning to \eqref{eqn:discrete_est_error}, and testing with $\test=\errorconf$ we have, via Young's inequality,
\begin{equation}\label{eqn:discrete_penultimate}
\begin{split}
\innerprod{\timederiv{\error}}{\weight\errorconf} + \bilinearareac{\error}{\weight\errorconf}
&\lesssim
\lbasis{n}^2\deSsub{1}{n}^2 + \lbasis{n-1}^2\deSsub{1}{n-1}^2 + \deSsub{2}{n}^2 \\
	&\phantom{\lesssim}+ \deTsub{1}{n}^2 + \deTsub{2}{n}^2 
	+ \ndgp{\errorconf}^2 + \innerprod{\addreac\error}{\weight\errorconf},
\end{split}
\end{equation}
and, thus,
\begin{equation}\label{eqn:discrete_error_pre_integration}
\begin{split}
\timederiv{\left(\ltwow{\errorconf}^2\right)} + \ndgp{\errorconf}^2
&\lesssim
\lbasis{n}^2\deSsub{1}{n}^2 + \lbasis{n-1}^2\deSsub{1}{n-1}^2 + \deSsub{2}{n}^2 
	+ \deTsub{1}{n}^2 + \deTsub{2}{n}^2 \\
	&
	\hspace{-2.7cm}+ \min\left\{\ltwow{\correctedltwo^{-\half}\timederiv{\left(\uh[d]\right)}}, \ltwow{\diffusivity^{-\half}\timederiv{\left(\uh[d]\right)}} \right\}^2 + \left(\ndgp{\uh[d]}+\ndgpa{\uh[d]}\right)^2
	+\ltwow{\frac{\addreac}{\sqrt{\correctedltwo}}\error}^2 \\
&\hspace{-2.7cm}\lesssim
\lbasis{n}^2\deSsub{1}{n}^2 + \lbasis{n-1}^2\deSsub{1}{n-1}^2 + \deSsub{2}{n}^2 
	+ \deTsub{1}{n}^2 + \deTsub{2}{n}^2 
	+ \deSsub{4}{n}^2 + \deSsub{1}{n}^2
	+ \ltwow{\frac{\addreac}{\sqrt{\correctedltwo}}\error}^2.
\end{split}
\end{equation}
The result now follows by completely analogous argument to the semi-discrete case.\qed
\end{proof}

For simplicity, we stated the above result for the final time $T$, but clearly it applies up to any timestep.

\section{Discussion and implementation of the estimators}\label{sect:relation_to_existing_results}

We continue with a few remarks on the derived \aposteriori{} error estimator and on the tuning of the involved parameters. 

\subsection{Properties of the estimators}

We begin by highlighting the effect that the use of the Gr\"onwall inequality  (cf., proof of Theorem~\ref{thm:semi-discrete-apost}) may have upon the sharpness of the resulting bound and, thus, on the quality of the resulting error bound as an adaptivity indicator. The argument requires the estimation $\ltwow{\frac{\addreac}{\sqrt{\correctedltwo}}\error}\le \|\frac{\addreac}{\sqrt{\correctedltwo}}\|_{\infty} \ltwow{\error}$ and, so, we lose the 
local dependence of the inequality upon ${\addreac}/{\sqrt{\correctedltwo}}$. This may reduce the \emph{local} sharpness of the bound in some cases. 
However, we argue that the estimator can still be used as an effective error indicator in practice. 
Indeed, unless this is the dominant term locally, most of the information is encoded in the remaining 
terms whose sum will act as an appropriate adaptivity indicator.
In cases when $|\frac{\addreac}{\sqrt{\correctedltwo}}|\ll\|\frac{\addreac}{\sqrt{\correctedltwo}}\|_{\infty} $ locally,  the adaptivity indicator
will not act in an optimal manner, ranking cells in an order different to their true local contribution to the error. To minimise this effect, it is important to fix judiciously the parameters $\alpha$ and $\addreac$, characterising the magnitude of the weighting function and of the artificial reaction term, respectively.

Lemma \ref{lemma:A_coercivity} implies that $\addreac(\bx)$ is required to be large enough to assert continuity. Since \eqref{eqn:fullydiscreteapostbound} contains an exponential term
of $\max_\domain\left({\addreac[2]}/{\correctedltwo}\right)$, it is of paramount
importance to reduce the value of $\addreac$ wherever possible. Thus, based on~\eqref{eqn:addreacchoice}, the ideal choice is to fix
\begin{equation*}
\addreac(\bx) = \max\left\{0,-2\ltwoweightexplicit(\bx)\right\},
\end{equation*}
to ensure continuity while also minimising the magnitude
of added reaction.

Good choices of $\alpha$ are less clear. 
Two main concerns should guide its definition.
Firstly, as above, we wish to reduce the magnitude of $\addreac$ wherever possible. In some circumstances,
a judicious choice of the value of $\alpha$ may lead to the method requiring no $\addreac$ anywhere, in which 
case no exponential term will be incurred; see also the comments below about previous results. 
Secondly, the choice of $\alpha$ affects the weight $\weight$ and, thus, the weighted norm used to derive the error bound. It also affects the value of 
$\correctedltwo$. Through these quantities, an injudicious choice of $\alpha$ may have the undesirable effect
of misleading weighting of the error norm, rendering the resulting estimators not useful for our purposes. For instance, if a very large value of $\alpha$ is used, 
such that the weight $\weight = \exp(-\alpha\convgrad)$ is very small in most areas, and a larger value 
in only a small area, then the resulting norm informs us little about the global behaviour of the solution.
%
%
%

For example, if the field $\conv$ is exactly the curl of another field, i.e., $\grad\convgrad = 0$, then we may choose $\convgrad=0$ and, thus, we have $\weight=1$. That is, we recover the unweighted norm case. Further, we may also fix $\addreac=0$, 
removing the need to employ Gr\"onwall's Lemma, (cf., \eqref{eqn:discrete_penultimate},) and the resulting addition of an 
exponential term. In this case, we recover the bound of \cite{Cangiani2013a}.

On the other hand, consider the case of negative divergence, e.g., suppose $\Omega=\left[0,1\right]^2$ and  $\conv =\left(1,\half-\half y-x\right)^\transpose$. In this case, $\div\conv=-\half$, and so we should have 
little difficulty in deriving a bound as shown in \cite{Schotzau2009}:
since this flow is characterised by $
\conv = \grad\left(x-\frac{y^2}{4}\right) + \curl\left(-x+\frac{x^2}{2} + y\right),
$
we have that 
\begin{equation*}
\ltwoweightexplicit \geq 1-\frac{3}{2}\diffusivity,
\end{equation*}
everywhere in $\Omega$ and, thus,  for small enough $\diffusivity$, we can again fix $\addreac=0$, that is no  artificial reaction term is required. Note, however, that we are still deriving an error bound in a weighted dG norm, with 
$
\weight=\exp\left(-\alpha\left(x-\frac{y^2}{4}\right)\right).
$
Hence, we may view the new bound as an alternative to that proven in \cite{Cangiani2013a}.

Finally, for convection fields for which the introduction of the weighted norm is not sufficient, such as in presence of positive divergence, we can  add enough reaction locally to ensure coercivity and thus obtain an\aposteriori{} error estimator for a regime out of reach for standard approaches. 

Concluding, the above analysis improves upon and refines known results,
while offering the possibility of reduced dependence upon the \emph{worst case} Gr\"onwall
constant for a number of relevant scenarios.


\subsection{Implementation considerations}\label{sec:imple}

We comment on the practical implementation of the terms composing the \aposteriori{} \emph{error estimate}
\eqref{eqn:fullydiscreteapostbound} as local error indicators within a mesh adaptive algorithm. 

In view of the following application to a coupled problem whereby the convective field is also approximated numerically, we assume that such field is a discrete function with respect to the same mesh and time-steps used for the computation of $\uh$. Hence, we consider the  solution pair $(\uh[n],\convh[n])$ to be defined on the triangulation $\tria[n]$, for $n=0,1,\ldots,\ntimesteps$.  

While most terms involved are standard and are computable (up to an approximation for patchwise-defined quantities) from the solution pair $(\uh[n],\convh[n])$, some, less standard, terms require special considerations. We refer specifically to the assembly of $\grad\convgrad$ and $\weight$, arising by the use of the Helmholtz decomposition, and the integration-in-time of quantities that are nonlinear or non-polynomial in time, e.g., the weighting function $\weight$.

The computation of the weighting function $\weight[n] = \exp(-\alpha\convgrad[n])$ at each time-step requires the evaluation of the function $\convgrad[n]$ from the Helmholtz decomposition $\convh[n] =  \grad\convgrad[n]+\curl\convcurl[n]$. 
Since $\div\curl\convcurl[n] = 0$, $\convgrad[n]$ satisfies 
$
\div\convh[n] = \laplacian\convgradhat[n]
$. 
 Thus, we are able to compute the approximate field $\convgradh[n]$ by solving the FEM problem:
find $\convgradh[n]\in\convgradspace[n]$ such that 
\begin{equation}\label{eq:auxiliary}
\innerprod{\grad\convgradh[n]}{\grad\testh[n]}=\innerprod{\div\convh[n]}{\testh[n]}\qquad \forall\testh[n]\in\pwpolyspace[n]{k},
\end{equation}
using the standard, continuous finite element spaces
\[
\pwpolyspace[n]{k}	\coloneqq 
\dgspace[k](\tria[n])\intersect\Czerospace[\domain],\quad \convgradspace[n] \coloneqq \pwpolyspace[n]{k} \intersect\left\{\testh\in\ltwospace \suchthat \testh\evalat{\domainbdy} = 0\right\},
\]
with $k$ the polynomial degree of the velocity field. Thus, the evaluation of the weighting function requires the solution of the auxiliary problem~\eqref{eq:auxiliary} at each time-step, which allows to compute, at least approximately, $\weight[n]$ and $\correctedltwo^n$.

Another difficulty in the evaluation of the estimator~\eqref{eqn:fullydiscreteapostbound} is the computation of maxima over patches for the terms $\weightmax{\edgevertexpatch}$, 
$\linftys{\conv[n]-\alpha^n\diffusivity\grad\convgrad[n]}{\edgevertexpatch}^2$, 
and $\linftysw{\correctedltwo^n}{\edgevertexpatch}$ in $\deSsub{1}{n}$,
$\weightmax{\edgevertexpatch}$ in $\deSsub{3}{n}$,
and $\linftysw{\correctedltwo^{-\half}}{\edgevertexpatch}$, $\weightmax{\edgevertexpatch}$ 
in $\deSsub{4}{n}$. 
Each of these requires the calculation of a maximum over 
$\edgevertexpatch$
. 
However, typical discontinuous Galerkin assembly  works by iterating over all cells, and all 
faces of each cell,  hence, the knowledge of vertex-neighbours is not immediately available. A simple solution is to 
approximate this quantity by computing instead the maximum over the edge
patch $\edgepatch \subset \edgevertexpatch$ comprising only the two cells sharing $\edge$ as an edge.

A second approximation is required to simplify integration in time of the non-polynomial functions appearing, for instance, in term $\deSsub{2}{n}$.
The cell weight $\cellweight^2$ featuring therein is varying in time, cf. \eqref{eqn:cell_and_edge_weights},
and, due to the presence of the exponential function in the weight $\weight$, it is, in general, non-polynomial.
Nevertheless, even if its exact integration is often unavailable, it is typically smoothly varying and, thus, not challenging. We take different approaches 
to computing this quantity in the terms $\deSsub{1}{n}^2$ and $\deSsub{2}{n}^2$ for 
simplicity of implementation. Since $\deSsub{1}{n}^2$ is defined on a single mesh,
we evaluate this term only at the end of the time interval. In contrast, 
for the term $\deSsub{2}{n}^2$, the implementation has access to the union mesh, 
and the values of the necessary quantities at both ends of each time interval. 
As such, in $\deSsub{2}{n}^2$ we can take the approximation that 
\begin{equation*}
\int_{\timek[n-1]}^{\timek[n]} \cellweight^2 \ds \approx \timestep{n} \max\left\{ \cellweight^2\evalat{\timek[n-1]},\cellweight^2\evalat{\timek[n]}\right\},
\end{equation*}
with little extra effort. The coefficient $\linftysw{\correctedltwo^{-\half}}{\edgevertexpatch}^2$ 
in $\deSsub{4}{n}^2$ can be treated completely analogously.

Also, the evaluation of the estimator terms $\deSsub{2}{n}^2$, $\deSsub{4}{n}^2$, $\deTsub{1}{n}^2$, and $\deTsub{2}{n}^2$ requires projection, viz.,  $\ltwoproj[k]^n\uh[n-1]$. This can be conveniently computed by forming the \emph{union mesh} $\tria[n-1]\union\tria[n]$. However, keeping in memory three different meshes, $\tria[n-1]$, $\tria[n]$ and $\tria[n-1]\union\tria[n]$, can be challenging for large scale problems. To avoid this, we proceed as follows.
The union mesh $\tria[n-1]\union\tria[n]$ is exactly the mesh generated by  \emph{only} applying the modification operations required to move from $\tria[n-1]$ to $\tria[n]$. Thus, instead of making a copy of the triangulation at each timestep, we keep an \emph{auxiliary  triangulation} $\secondtria[n]$ throughout the simulation which follows the main triangulation. By saving  and re-using the refinement and coarsening flags used on the main triangulation, we can ensure that the auxiliary triangulation follows exactly the same pattern of refinement and coarsening as the main triangulation, but at a delayed time in the simulation process. This is implemented as follows. First, the auxiliary triangulation $\secondtria[n-1]$ is held in the unadapted state while  the main triangulation is adapted.  Then, we apply only the refinement process to $\secondtria[n-1]$, yielding $\secondtria[n-\half]$. Note that this may not be exactly the union triangulation, as in principle a cell may be refined and then its children be coarsened during the same step. However, $\secondtria[n-\half]$ is at least as refined as the union mesh. Thus, interpolation to $\secondtria[n-\half]$ of all the finite element functions from $\secondtria[n-1]$ amounts to the identity operator. After the estimator is computed in this way, the new auxiliary mesh is updated as $\secondtria[n]=\tria[n]$ and the adaptive step is complete. The above process results to only two meshes required to be stored at any one time, at the expense of a slight modification of the projection operation given that we project over $\secondtria[n-\half]$ rather than $\tria[n]$ and, as noted above, these meshes may differ slightly.

\section{Numerical experiments}\label{sec:numerics}

We examine the behaviour of the full error estimate \eqref{eqn:fullydiscreteapostbound} on the convection-diffusion problem~\eqref{eqn:convdiff}-\eqref{eqn:convdiffinit} with prescribed convection. In the following examples, 
the initial temperature field is given by 
\begin{equation*}
\u_0(x,y)=1 - (1-y+0.15\sin(4\pi x)\sin(2\pi y)),
\end{equation*}
on a box domain $\domain = \left[0,1\right]^2$, with Dirichlet boundary conditions 
enforced on all boundaries, with values compatible with the initial temperature 
field. The diffusion is constant, $\diffusivity=$\num{1e-6}, and a uniform mesh 
is used.

In the following, we repeatedly make use of the shorthand for $z$-independent vector fields, that is, we may denote a vector field of the form
$\twovectorflow \coloneqq \left(0, 0, g(x, y)\right)^\transpose$, where $g(x, y)$ is constant
in the $z$-direction, by $g(x, y)$.
Further, we use the notation 
\begin{equation*}
\de{S,k}{}{}^2 \coloneqq \sum_{n=1}^{k} \int_{\timek[n-1]}^{\timek[n]} \left( \deSsub{1}{n}^2 + \deSsub{1}{n-1}^2 + \deSsub{2}{n}^2 + \deSsub{4}{n}^2 \right) \ds + \max_{0\leq n\leq\ntimesteps} \deSsub{3}{n}^2,
\end{equation*} 
and 
\begin{equation*}
\de{T,k}{}{}^2 \coloneqq \sum_{n=1}^{k} \int_{\timek[n-1]}^{\timek[n]}  \deTsub{1}{n}^2 +  \deTsub{2}{n}^2 \ds,
\end{equation*}
to refer to the full spatial estimate, and time estimate, respectively.
Furthermore, we use the notation $\de{k}{}{}^2$ to refer to the full
on the right-hand side of \eqref{eqn:fullydiscreteapostbound}, 
excluding the initial discretisation error $\ltwow{\error(\starttime)}^2$.

We consider different cases, depending on the flow field $\conv$ with different characteristics. In each case, we report the value of the leading terms in the estimator at each time-step and the time accumulation of the space, time, and full error estimators  $\de{S,k}{}{}$, $\de{T,k}{}{}$, and $\de{k}{}{}$, respectively.

\subsection*{Case 1.} 
We impose the divergence-free flow $\conv = \curl\convcurl$, where $\convcurl = \frac{x^2+y^2}{2}$. Thus, $\conv = (y,-x)^\top$ and $\convgrad=0$. In this case, the weight $\weight$ is equal to 1 and we recover an un-weighted dG norm. 
Under these circumstances, we have $\correctedltwo=\addreac$, and so we may choose
$\addreac=0$ to remove the exponential term in the estimator, but have only an $H^1$-seminorm bound.
\begin{figure}
\hspace{0.7cm}
\begingroup
  \makeatletter
  \providecommand\color[2][]{%
    \GenericError{(gnuplot) \space\space\space\@spaces}{%
      Package color not loaded in conjunction with
      terminal option `colourtext'%
    }{See the gnuplot documentation for explanation.%
    }{Either use 'blacktext' in gnuplot or load the package
      color.sty in LaTeX.}%
    \renewcommand\color[2][]{}%
  }%
  \providecommand\includegraphics[2][]{%
    \GenericError{(gnuplot) \space\space\space\@spaces}{%
      Package graphicx or graphics not loaded%
    }{See the gnuplot documentation for explanation.%
    }{The gnuplot epslatex terminal needs graphicx.sty or graphics.sty.}%
    \renewcommand\includegraphics[2][]{}%
  }%
  \providecommand\rotatebox[2]{#2}%
  \@ifundefined{ifGPcolor}{%
    \newif\ifGPcolor
    \GPcolorfalse
  }{}%
  \@ifundefined{ifGPblacktext}{%
    \newif\ifGPblacktext
    \GPblacktexttrue
  }{}%
  \let\gplgaddtomacro\g@addto@macro
  \gdef\gplbacktext{}%
  \gdef\gplfronttext{}%
  \makeatother
  \ifGPblacktext
    \def\colorrgb#1{}%
    \def\colorgray#1{}%
  \else
    \ifGPcolor
      \def\colorrgb#1{\color[rgb]{#1}}%
      \def\colorgray#1{\color[gray]{#1}}%
      \expandafter\def\csname LTw\endcsname{\color{white}}%
      \expandafter\def\csname LTb\endcsname{\color{black}}%
      \expandafter\def\csname LTa\endcsname{\color{black}}%
      \expandafter\def\csname LT0\endcsname{\color[rgb]{1,0,0}}%
      \expandafter\def\csname LT1\endcsname{\color[rgb]{0,1,0}}%
      \expandafter\def\csname LT2\endcsname{\color[rgb]{0,0,1}}%
      \expandafter\def\csname LT3\endcsname{\color[rgb]{1,0,1}}%
      \expandafter\def\csname LT4\endcsname{\color[rgb]{0,1,1}}%
      \expandafter\def\csname LT5\endcsname{\color[rgb]{1,1,0}}%
      \expandafter\def\csname LT6\endcsname{\color[rgb]{0,0,0}}%
      \expandafter\def\csname LT7\endcsname{\color[rgb]{1,0.3,0}}%
      \expandafter\def\csname LT8\endcsname{\color[rgb]{0.5,0.5,0.5}}%
    \else
      \def\colorrgb#1{\color{black}}%
      \def\colorgray#1{\color[gray]{#1}}%
      \expandafter\def\csname LTw\endcsname{\color{white}}%
      \expandafter\def\csname LTb\endcsname{\color{black}}%
      \expandafter\def\csname LTa\endcsname{\color{black}}%
      \expandafter\def\csname LT0\endcsname{\color{black}}%
      \expandafter\def\csname LT1\endcsname{\color{black}}%
      \expandafter\def\csname LT2\endcsname{\color{black}}%
      \expandafter\def\csname LT3\endcsname{\color{black}}%
      \expandafter\def\csname LT4\endcsname{\color{black}}%
      \expandafter\def\csname LT5\endcsname{\color{black}}%
      \expandafter\def\csname LT6\endcsname{\color{black}}%
      \expandafter\def\csname LT7\endcsname{\color{black}}%
      \expandafter\def\csname LT8\endcsname{\color{black}}%
    \fi
  \fi
    \setlength{\unitlength}{0.0500bp}%
    \ifx\gptboxheight\undefined%
      \newlength{\gptboxheight}%
      \newlength{\gptboxwidth}%
      \newsavebox{\gptboxtext}%
    \fi%
    \setlength{\fboxrule}{0.5pt}%
    \setlength{\fboxsep}{1pt}%
\begin{picture}(8640.00,4320.00)%
    \gplgaddtomacro\gplbacktext{%
      \csname LTb\endcsname
      \put(300,864){\makebox(0,0)[r]{\strut{}$0$}}%
      \put(300,1469){\makebox(0,0)[r]{\strut{}$5$}}%
      \put(300,2073){\makebox(0,0)[r]{\strut{}$10$}}%
      \put(300,2678){\makebox(0,0)[r]{\strut{}$15$}}%
      \put(335,3282){\makebox(0,0)[r]{\strut{}$\times10^{4}$}}%
      \put(432,644){\makebox(0,0){\strut{}$0$}}%
      \put(916,644){\makebox(0,0){\strut{}$0.5$}}%
      \put(1399,644){\makebox(0,0){\strut{}$1$}}%
      \put(1883,644){\makebox(0,0){\strut{}$1.5$}}%
      \put(2366,644){\makebox(0,0){\strut{}$2$}}%
      \put(2850,644){\makebox(0,0){\strut{}$2.5$}}%
    }%
    \gplgaddtomacro\gplfronttext{%
      \csname LTb\endcsname
      \put(1995,3007){\makebox(0,0)[r]{\strut{}$\zeta_{S_1,k}$}}%
      \csname LTb\endcsname
      \put(1995,2699){\makebox(0,0)[r]{\strut{}$\zeta_{S_4,k}$}}%
      \csname LTb\endcsname
      \put(1641,314){\makebox(0,0){\strut{}Time (s)}}%
      \put(1541,3612){\makebox(0,0){\strut{}Leading error estimator terms}}%
    }%
    \gplgaddtomacro\gplbacktext{%
      \csname LTb\endcsname
      \put(4015,864){\makebox(0,0)[r]{\strut{}$0$}}%
      \put(4015,1348){\makebox(0,0)[r]{\strut{}$5$}}%
      \put(4015,1831){\makebox(0,0)[r]{\strut{}$10$}}%
      \put(4015,2315){\makebox(0,0)[r]{\strut{}$15$}}%
      \put(4015,2798){\makebox(0,0)[r]{\strut{}$20$}}%
      \put(4050,3282){\makebox(0,0)[r]{\strut{}$\times10^{4}$}}%
      \put(4147,644){\makebox(0,0){\strut{}$0$}}%
      \put(4631,644){\makebox(0,0){\strut{}$0.5$}}%
      \put(5114,644){\makebox(0,0){\strut{}$1$}}%
      \put(5598,644){\makebox(0,0){\strut{}$1.5$}}%
      \put(6081,644){\makebox(0,0){\strut{}$2$}}%
      \put(6565,644){\makebox(0,0){\strut{}$2.5$}}%
    }%
    \gplgaddtomacro\gplfronttext{%
      \csname LTb\endcsname
      \put(5710,2161){\makebox(0,0)[r]{\strut{}$\zeta_{S,k}$}}%
      \csname LTb\endcsname
      \put(5710,1853){\makebox(0,0)[r]{\strut{}$\zeta_{T,k}$}}%
      \csname LTb\endcsname
      \put(5710,1545){\makebox(0,0)[r]{\strut{}$\zeta_{k}$}}%
      \csname LTb\endcsname
      \put(5356,314){\makebox(0,0){\strut{}Time (s)}}%
      \put(5258,3612){\makebox(0,0){\strut{}Time-accumulated error estimators}}%
    }%
    \gplbacktext
    \put(0,0){\includegraphics[width={432.00bp},height={216.00bp}]{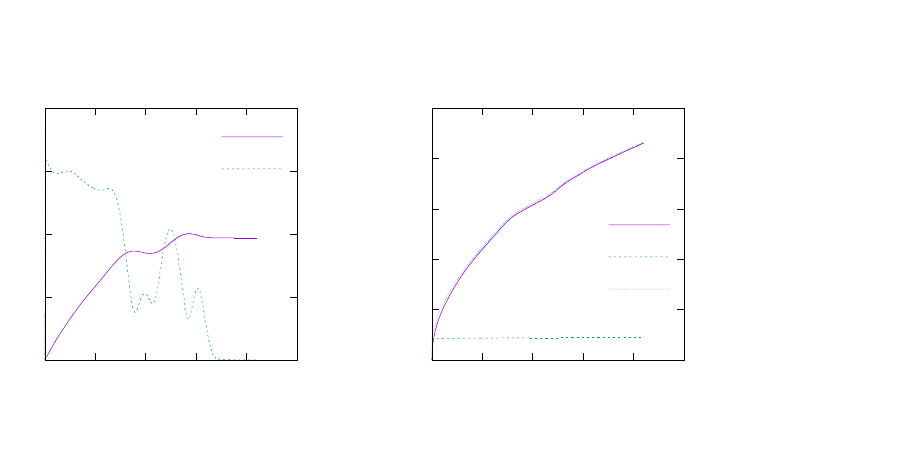}}%
    \gplfronttext
  \end{picture}%
\endgroup
\caption{Estimator terms in Case 1 with $\addreac=0$.}\label{fig:case1a_5}
\end{figure}
Then, we  fix $\addreac=0.1$, resulting in $\correctedltwo=\addreac$.  
In this case, the error estimate has an exponential term of $e^{0.1\finaltime}$ but it
includes the term $0.1\ltwos{\error}{\cell}^2$. 

Figure \ref{fig:case1a_5} and Figure  \ref{fig:case1b_5}  show the results corresponding to $\addreac=0$ and $\addreac=0.1$, respectively. 
The lack of an $L^2$-term when $\addreac=0$ forces the estimator to rely on inequalities related to the diffusion $\diffusivity$. 
This leads to an instant factor of $10^6$ in several estimator terms, and so this 
estimator has a large absolute value, but exhibits only linear growth after $\timevar=1.5$. Indeed, the estimator is initially dominated by the term $\zeta_{S_4,k}$ scaling as $1/\diffusivity$, until this tails off due to a reduction of the solution's jumps across the mesh faces as the solution becomes smoother over time, cf. the left panel in Figure~\ref{fig:case1a_5}.
On the other hand, fixing $\addreac=0.1$ yields control on the full dG norm, including a weighted $L^2$-norm term, and we rely on inequalities involving 
$\correctedltwo=0.1$, leading to a much smaller absolute value for the estimator at small times. Although the term $\zeta_{S_4,k}$ is still dominant in the initial stages, it is much reduced in magnitude, clearly showing that a better balance is obtained between the various controlling mechanisms. The exponential nature of the error bound begins to show at later times. Since the exponent is only $0.1\timevar$, this example exhibits very slow exponential growth, but will eventually overwhelm the estimate in the case $\addreac=0$.

\begin{figure}
\hspace{0.5cm}
\begingroup
  \makeatletter
  \providecommand\color[2][]{%
    \GenericError{(gnuplot) \space\space\space\@spaces}{%
      Package color not loaded in conjunction with
      terminal option `colourtext'%
    }{See the gnuplot documentation for explanation.%
    }{Either use 'blacktext' in gnuplot or load the package
      color.sty in LaTeX.}%
    \renewcommand\color[2][]{}%
  }%
  \providecommand\includegraphics[2][]{%
    \GenericError{(gnuplot) \space\space\space\@spaces}{%
      Package graphicx or graphics not loaded%
    }{See the gnuplot documentation for explanation.%
    }{The gnuplot epslatex terminal needs graphicx.sty or graphics.sty.}%
    \renewcommand\includegraphics[2][]{}%
  }%
  \providecommand\rotatebox[2]{#2}%
  \@ifundefined{ifGPcolor}{%
    \newif\ifGPcolor
    \GPcolorfalse
  }{}%
  \@ifundefined{ifGPblacktext}{%
    \newif\ifGPblacktext
    \GPblacktexttrue
  }{}%
  \let\gplgaddtomacro\g@addto@macro
  \gdef\gplbacktext{}%
  \gdef\gplfronttext{}%
  \makeatother
  \ifGPblacktext
    \def\colorrgb#1{}%
    \def\colorgray#1{}%
  \else
    \ifGPcolor
      \def\colorrgb#1{\color[rgb]{#1}}%
      \def\colorgray#1{\color[gray]{#1}}%
      \expandafter\def\csname LTw\endcsname{\color{white}}%
      \expandafter\def\csname LTb\endcsname{\color{black}}%
      \expandafter\def\csname LTa\endcsname{\color{black}}%
      \expandafter\def\csname LT0\endcsname{\color[rgb]{1,0,0}}%
      \expandafter\def\csname LT1\endcsname{\color[rgb]{0,1,0}}%
      \expandafter\def\csname LT2\endcsname{\color[rgb]{0,0,1}}%
      \expandafter\def\csname LT3\endcsname{\color[rgb]{1,0,1}}%
      \expandafter\def\csname LT4\endcsname{\color[rgb]{0,1,1}}%
      \expandafter\def\csname LT5\endcsname{\color[rgb]{1,1,0}}%
      \expandafter\def\csname LT6\endcsname{\color[rgb]{0,0,0}}%
      \expandafter\def\csname LT7\endcsname{\color[rgb]{1,0.3,0}}%
      \expandafter\def\csname LT8\endcsname{\color[rgb]{0.5,0.5,0.5}}%
    \else
      \def\colorrgb#1{\color{black}}%
      \def\colorgray#1{\color[gray]{#1}}%
      \expandafter\def\csname LTw\endcsname{\color{white}}%
      \expandafter\def\csname LTb\endcsname{\color{black}}%
      \expandafter\def\csname LTa\endcsname{\color{black}}%
      \expandafter\def\csname LT0\endcsname{\color{black}}%
      \expandafter\def\csname LT1\endcsname{\color{black}}%
      \expandafter\def\csname LT2\endcsname{\color{black}}%
      \expandafter\def\csname LT3\endcsname{\color{black}}%
      \expandafter\def\csname LT4\endcsname{\color{black}}%
      \expandafter\def\csname LT5\endcsname{\color{black}}%
      \expandafter\def\csname LT6\endcsname{\color{black}}%
      \expandafter\def\csname LT7\endcsname{\color{black}}%
      \expandafter\def\csname LT8\endcsname{\color{black}}%
    \fi
  \fi
    \setlength{\unitlength}{0.0500bp}%
    \ifx\gptboxheight\undefined%
      \newlength{\gptboxheight}%
      \newlength{\gptboxwidth}%
      \newsavebox{\gptboxtext}%
    \fi%
    \setlength{\fboxrule}{0.5pt}%
    \setlength{\fboxsep}{1pt}%
\begin{picture}(8640.00,4320.00)%
    \gplgaddtomacro\gplbacktext{%
      \csname LTb\endcsname
      \put(300,864){\makebox(0,0)[r]{\strut{}$0$}}%
      \put(300,1469){\makebox(0,0)[r]{\strut{}$50$}}%
      \put(300,2073){\makebox(0,0)[r]{\strut{}$100$}}%
      \put(300,2678){\makebox(0,0)[r]{\strut{}$150$}}%
      \put(300,3282){\makebox(0,0)[r]{\strut{}$200$}}%
      \put(432,644){\makebox(0,0){\strut{}$0$}}%
      \put(916,644){\makebox(0,0){\strut{}$0.5$}}%
      \put(1399,644){\makebox(0,0){\strut{}$1$}}%
      \put(1883,644){\makebox(0,0){\strut{}$1.5$}}%
      \put(2366,644){\makebox(0,0){\strut{}$2$}}%
      \put(2850,644){\makebox(0,0){\strut{}$2.5$}}%
    }%
    \gplgaddtomacro\gplfronttext{%
      \csname LTb\endcsname
      \put(1995,3007){\makebox(0,0)[r]{\strut{}$\zeta_{S_1,k}$}}%
      \csname LTb\endcsname
      \put(1995,2699){\makebox(0,0)[r]{\strut{}$\zeta_{S_4,k}$}}%
      \csname LTb\endcsname
      \put(1641,314){\makebox(0,0){\strut{}Time (s)}}%
      \put(1541,3612){\makebox(0,0){\strut{}Leading error estimator terms}}%
    }%
    \gplgaddtomacro\gplbacktext{%
      \csname LTb\endcsname
      \put(4015,864){\makebox(0,0)[r]{\strut{}$0$}}%
      \put(4015,1348){\makebox(0,0)[r]{\strut{}$50$}}%
      \put(4015,1831){\makebox(0,0)[r]{\strut{}$100$}}%
      \put(4015,2315){\makebox(0,0)[r]{\strut{}$150$}}%
      \put(4015,2798){\makebox(0,0)[r]{\strut{}$200$}}%
      \put(4015,3282){\makebox(0,0)[r]{\strut{}$250$}}%
      \put(4147,644){\makebox(0,0){\strut{}$0$}}%
      \put(4631,644){\makebox(0,0){\strut{}$0.5$}}%
      \put(5114,644){\makebox(0,0){\strut{}$1$}}%
      \put(5598,644){\makebox(0,0){\strut{}$1.5$}}%
      \put(6081,644){\makebox(0,0){\strut{}$2$}}%
      \put(6565,644){\makebox(0,0){\strut{}$2.5$}}%
    }%
    \gplgaddtomacro\gplfronttext{%
      \csname LTb\endcsname
      \put(5710,2161){\makebox(0,0)[r]{\strut{}$\zeta_{S,k}$}}%
      \csname LTb\endcsname
      \put(5710,1853){\makebox(0,0)[r]{\strut{}$\zeta_{T,k}$}}%
      \csname LTb\endcsname
      \put(5710,1545){\makebox(0,0)[r]{\strut{}$\zeta_{k}$}}%
      \csname LTb\endcsname
      \put(5356,314){\makebox(0,0){\strut{}Time (s)}}%
      \put(5258,3612){\makebox(0,0){\strut{}Time-accumulated error estimators}}%
    }%
    \gplbacktext
    \put(0,0){\includegraphics[width={432.00bp},height={216.00bp}]{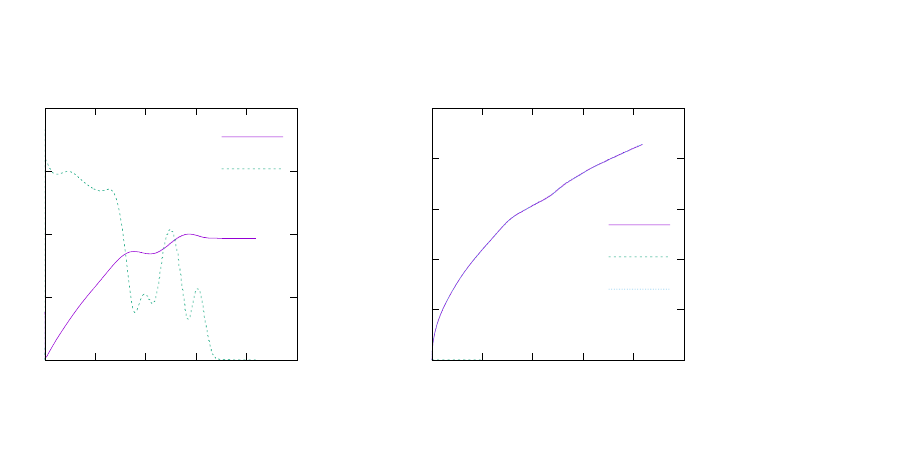}}%
    \gplfronttext
  \end{picture}%
\endgroup
\caption{Estimator terms in Case 1 with $\addreac=0.1$. The time error estimator $\de{T,k}{}{}$ is in this case orders of magnitude smaller than $\de{S,k}{}{}$, hence the latter appears superimposed to the full estimator $\de{k}{}{}$.}\label{fig:case1b_5}
\end{figure}

\subsection*{Case 2.}
We now set 
\begin{equation*}
\conv = \left(\begin{array}{c}e^x \sin y + y\\e^x \cos y -x\end{array}\right) = \grad (e^x \sin y) + \curl\frac{x^2+y^2}{2}.
\end{equation*}
This flow field can no longer be characterised as $\conv=\curl\convcurl$, but it is still divergence-free, 
and $\convgrad$ is harmonic but not zero.
Since $\div\conv=0$, then 
\begin{equation*}
\correctedltwo=\addreac + \half\alpha\left(\conv\cdot\grad\convgrad - \alpha\diffusivity \abs*{\grad\convgrad}^2\right) = \addreac + \half\alpha e^x\left(\left(1-\alpha\diffusivity\right)e^x + y \sin y - x \cos y\right).
\end{equation*}
We have $\correctedltwo > \addreac$ in the domain of interest and thus we can choose $\addreac=0$.
This results once again in no exponential term, but we do also have an $L^2$-like term in the norm.
The behaviour of the estimator is shown in Figure \ref{fig:case2_5}. In this case, the residual type term $\zeta_{S_1,k}$ is dominant throughout the computation. Note that solution largely reaches stability by $t=1$ due to the imposed velocity field and, thus, $\zeta_{S_1,k}$ as well as all the contributing factors become near-constant, leading to a linearly-increasing time-integrated error bound thereon.
\begin{figure}
\hspace{.7cm}
\begingroup
  \makeatletter
  \providecommand\color[2][]{%
    \GenericError{(gnuplot) \space\space\space\@spaces}{%
      Package color not loaded in conjunction with
      terminal option `colourtext'%
    }{See the gnuplot documentation for explanation.%
    }{Either use 'blacktext' in gnuplot or load the package
      color.sty in LaTeX.}%
    \renewcommand\color[2][]{}%
  }%
  \providecommand\includegraphics[2][]{%
    \GenericError{(gnuplot) \space\space\space\@spaces}{%
      Package graphicx or graphics not loaded%
    }{See the gnuplot documentation for explanation.%
    }{The gnuplot epslatex terminal needs graphicx.sty or graphics.sty.}%
    \renewcommand\includegraphics[2][]{}%
  }%
  \providecommand\rotatebox[2]{#2}%
  \@ifundefined{ifGPcolor}{%
    \newif\ifGPcolor
    \GPcolorfalse
  }{}%
  \@ifundefined{ifGPblacktext}{%
    \newif\ifGPblacktext
    \GPblacktexttrue
  }{}%
  \let\gplgaddtomacro\g@addto@macro
  \gdef\gplbacktext{}%
  \gdef\gplfronttext{}%
  \makeatother
  \ifGPblacktext
    \def\colorrgb#1{}%
    \def\colorgray#1{}%
  \else
    \ifGPcolor
      \def\colorrgb#1{\color[rgb]{#1}}%
      \def\colorgray#1{\color[gray]{#1}}%
      \expandafter\def\csname LTw\endcsname{\color{white}}%
      \expandafter\def\csname LTb\endcsname{\color{black}}%
      \expandafter\def\csname LTa\endcsname{\color{black}}%
      \expandafter\def\csname LT0\endcsname{\color[rgb]{1,0,0}}%
      \expandafter\def\csname LT1\endcsname{\color[rgb]{0,1,0}}%
      \expandafter\def\csname LT2\endcsname{\color[rgb]{0,0,1}}%
      \expandafter\def\csname LT3\endcsname{\color[rgb]{1,0,1}}%
      \expandafter\def\csname LT4\endcsname{\color[rgb]{0,1,1}}%
      \expandafter\def\csname LT5\endcsname{\color[rgb]{1,1,0}}%
      \expandafter\def\csname LT6\endcsname{\color[rgb]{0,0,0}}%
      \expandafter\def\csname LT7\endcsname{\color[rgb]{1,0.3,0}}%
      \expandafter\def\csname LT8\endcsname{\color[rgb]{0.5,0.5,0.5}}%
    \else
      \def\colorrgb#1{\color{black}}%
      \def\colorgray#1{\color[gray]{#1}}%
      \expandafter\def\csname LTw\endcsname{\color{white}}%
      \expandafter\def\csname LTb\endcsname{\color{black}}%
      \expandafter\def\csname LTa\endcsname{\color{black}}%
      \expandafter\def\csname LT0\endcsname{\color{black}}%
      \expandafter\def\csname LT1\endcsname{\color{black}}%
      \expandafter\def\csname LT2\endcsname{\color{black}}%
      \expandafter\def\csname LT3\endcsname{\color{black}}%
      \expandafter\def\csname LT4\endcsname{\color{black}}%
      \expandafter\def\csname LT5\endcsname{\color{black}}%
      \expandafter\def\csname LT6\endcsname{\color{black}}%
      \expandafter\def\csname LT7\endcsname{\color{black}}%
      \expandafter\def\csname LT8\endcsname{\color{black}}%
    \fi
  \fi
    \setlength{\unitlength}{0.0500bp}%
    \ifx\gptboxheight\undefined%
      \newlength{\gptboxheight}%
      \newlength{\gptboxwidth}%
      \newsavebox{\gptboxtext}%
    \fi%
    \setlength{\fboxrule}{0.5pt}%
    \setlength{\fboxsep}{1pt}%
\begin{picture}(8640.00,4320.00)%
    \gplgaddtomacro\gplbacktext{%
      \csname LTb\endcsname
      \put(300,864){\makebox(0,0)[r]{\strut{}$0$}}%
      \put(300,1267){\makebox(0,0)[r]{\strut{}$10$}}%
      \put(300,1670){\makebox(0,0)[r]{\strut{}$20$}}%
      \put(300,2073){\makebox(0,0)[r]{\strut{}$30$}}%
      \put(300,2476){\makebox(0,0)[r]{\strut{}$40$}}%
      \put(300,2879){\makebox(0,0)[r]{\strut{}$50$}}%
      \put(300,3282){\makebox(0,0)[r]{\strut{}$60$}}%
      \put(432,644){\makebox(0,0){\strut{}$0$}}%
      \put(916,644){\makebox(0,0){\strut{}$0.5$}}%
      \put(1399,644){\makebox(0,0){\strut{}$1$}}%
      \put(1883,644){\makebox(0,0){\strut{}$1.5$}}%
      \put(2366,644){\makebox(0,0){\strut{}$2$}}%
      \put(2850,644){\makebox(0,0){\strut{}$2.5$}}%
    }%
    \gplgaddtomacro\gplfronttext{%
      \csname LTb\endcsname
      \put(1995,1919){\makebox(0,0)[r]{\strut{}$\zeta_{S_1,k}$}}%
      \csname LTb\endcsname
      \put(1641,314){\makebox(0,0){\strut{}Time (s)}}%
      \put(1541,3612){\makebox(0,0){\strut{}Leading error estimator terms}}%
    }%
    \gplgaddtomacro\gplbacktext{%
      \csname LTb\endcsname
      \put(4015,864){\makebox(0,0)[r]{\strut{}$0$}}%
      \put(4015,1469){\makebox(0,0)[r]{\strut{}$25$}}%
      \put(4015,2073){\makebox(0,0)[r]{\strut{}$50$}}%
      \put(4015,2678){\makebox(0,0)[r]{\strut{}$75$}}%
      \put(4015,3282){\makebox(0,0)[r]{\strut{}$100$}}%
      \put(4147,644){\makebox(0,0){\strut{}$0$}}%
      \put(4631,644){\makebox(0,0){\strut{}$0.5$}}%
      \put(5114,644){\makebox(0,0){\strut{}$1$}}%
      \put(5598,644){\makebox(0,0){\strut{}$1.5$}}%
      \put(6081,644){\makebox(0,0){\strut{}$2$}}%
      \put(6565,644){\makebox(0,0){\strut{}$2.5$}}%
    }%
    \gplgaddtomacro\gplfronttext{%
      \csname LTb\endcsname
      \put(5710,1919){\makebox(0,0)[r]{\strut{}$\zeta_{S,k}$}}%
      \csname LTb\endcsname
      \put(5710,1611){\makebox(0,0)[r]{\strut{}$\zeta_{T,k}$}}%
      \csname LTb\endcsname
      \put(5710,1303){\makebox(0,0)[r]{\strut{}$\zeta_{k}$}}%
      \csname LTb\endcsname
      \put(5356,314){\makebox(0,0){\strut{}Time (s)}}%
      \put(5258,3612){\makebox(0,0){\strut{}Time-accumulated error estimators}}%
    }%
    \gplbacktext
    \put(0,0){\includegraphics[width={432.00bp},height={216.00bp}]{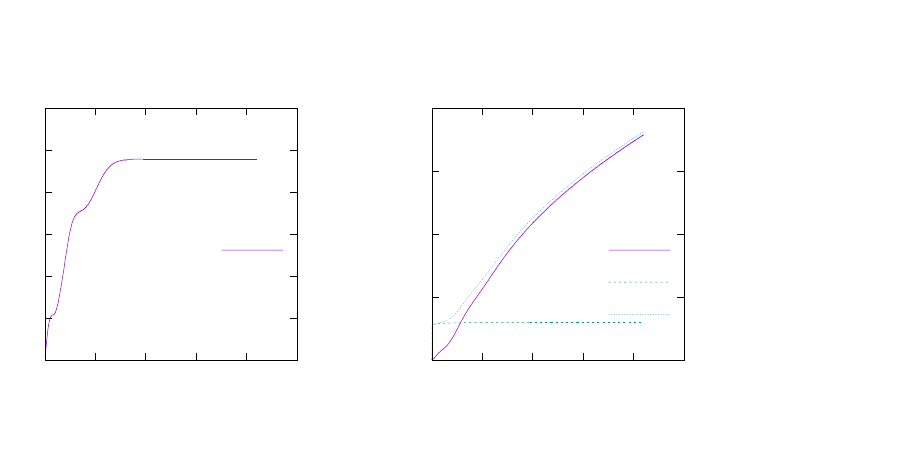}}%
    \gplfronttext
  \end{picture}%
\endgroup
\caption{Estimator terms in Case 2.}\label{fig:case2_5}
\end{figure}

\subsection*{Case 3.}
To consider a case in which the existing literature is not well equipped, we impose the flow 
\begin{equation*}
\conv=\left(\begin{array}{c}x\\y\end{array}\right) = \grad\left(\frac{x^2+y^2}{2}\right),
\end{equation*}
which has positive divergence as $\div\conv\equiv 2$. Then, 
\begin{equation*}
\half\left(\alpha\grad\convgrad-\grad\right)\cdot\left(\conv-\alpha\diffusivity\grad\convgrad\right) = \half\left(1-\alpha\diffusivity\right)\left(\alpha\left(x^2+y^2\right) - 2\right).
\end{equation*}
Thus, we add an artificial reaction term with 
$\addreac = 2\left(1-\alpha\diffusivity\right)\left(2-\alpha\left(x^2+y^2\right)\right)$ 
to satisfy \eqref{eqn:addreacchoice}. 

We consider the two approaches offered by the error estimate. We first take the simple choice of $\alpha=1$. Then the minimal artificial reaction we can impose is
$\addreac = 2\left(1-\diffusivity\right)\left(2-x^2-y^2\right)$.
This leads to an exponential term 
\begin{equation*}
\exp\left(\int_{\starttime}^{\finaltime} \max_{\domain}\frac{\addreac[2]}{\correctedltwo}\dt\right) 
= 
\exp\left(\frac{8}{3}\left(1-\diffusivity\right)\timevar\right),
\end{equation*}
in the error estimator. See Figure \ref{fig:case3_5} for the corresponding results.
~\begin{figure}
\hspace{.7cm}
\begingroup
  \makeatletter
  \providecommand\color[2][]{%
    \GenericError{(gnuplot) \space\space\space\@spaces}{%
      Package color not loaded in conjunction with
      terminal option `colourtext'%
    }{See the gnuplot documentation for explanation.%
    }{Either use 'blacktext' in gnuplot or load the package
      color.sty in LaTeX.}%
    \renewcommand\color[2][]{}%
  }%
  \providecommand\includegraphics[2][]{%
    \GenericError{(gnuplot) \space\space\space\@spaces}{%
      Package graphicx or graphics not loaded%
    }{See the gnuplot documentation for explanation.%
    }{The gnuplot epslatex terminal needs graphicx.sty or graphics.sty.}%
    \renewcommand\includegraphics[2][]{}%
  }%
  \providecommand\rotatebox[2]{#2}%
  \@ifundefined{ifGPcolor}{%
    \newif\ifGPcolor
    \GPcolorfalse
  }{}%
  \@ifundefined{ifGPblacktext}{%
    \newif\ifGPblacktext
    \GPblacktexttrue
  }{}%
  \let\gplgaddtomacro\g@addto@macro
  \gdef\gplbacktext{}%
  \gdef\gplfronttext{}%
  \makeatother
  \ifGPblacktext
    \def\colorrgb#1{}%
    \def\colorgray#1{}%
  \else
    \ifGPcolor
      \def\colorrgb#1{\color[rgb]{#1}}%
      \def\colorgray#1{\color[gray]{#1}}%
      \expandafter\def\csname LTw\endcsname{\color{white}}%
      \expandafter\def\csname LTb\endcsname{\color{black}}%
      \expandafter\def\csname LTa\endcsname{\color{black}}%
      \expandafter\def\csname LT0\endcsname{\color[rgb]{1,0,0}}%
      \expandafter\def\csname LT1\endcsname{\color[rgb]{0,1,0}}%
      \expandafter\def\csname LT2\endcsname{\color[rgb]{0,0,1}}%
      \expandafter\def\csname LT3\endcsname{\color[rgb]{1,0,1}}%
      \expandafter\def\csname LT4\endcsname{\color[rgb]{0,1,1}}%
      \expandafter\def\csname LT5\endcsname{\color[rgb]{1,1,0}}%
      \expandafter\def\csname LT6\endcsname{\color[rgb]{0,0,0}}%
      \expandafter\def\csname LT7\endcsname{\color[rgb]{1,0.3,0}}%
      \expandafter\def\csname LT8\endcsname{\color[rgb]{0.5,0.5,0.5}}%
    \else
      \def\colorrgb#1{\color{black}}%
      \def\colorgray#1{\color[gray]{#1}}%
      \expandafter\def\csname LTw\endcsname{\color{white}}%
      \expandafter\def\csname LTb\endcsname{\color{black}}%
      \expandafter\def\csname LTa\endcsname{\color{black}}%
      \expandafter\def\csname LT0\endcsname{\color{black}}%
      \expandafter\def\csname LT1\endcsname{\color{black}}%
      \expandafter\def\csname LT2\endcsname{\color{black}}%
      \expandafter\def\csname LT3\endcsname{\color{black}}%
      \expandafter\def\csname LT4\endcsname{\color{black}}%
      \expandafter\def\csname LT5\endcsname{\color{black}}%
      \expandafter\def\csname LT6\endcsname{\color{black}}%
      \expandafter\def\csname LT7\endcsname{\color{black}}%
      \expandafter\def\csname LT8\endcsname{\color{black}}%
    \fi
  \fi
    \setlength{\unitlength}{0.0500bp}%
    \ifx\gptboxheight\undefined%
      \newlength{\gptboxheight}%
      \newlength{\gptboxwidth}%
      \newsavebox{\gptboxtext}%
    \fi%
    \setlength{\fboxrule}{0.5pt}%
    \setlength{\fboxsep}{1pt}%
    \definecolor{tbcol}{rgb}{1,1,1}%
\begin{picture}(8640.00,4320.00)%
    \gplgaddtomacro\gplbacktext{%
      \csname LTb\endcsname
      \put(300,864){\makebox(0,0)[r]{\strut{}$0$}}%
      \put(300,1267){\makebox(0,0)[r]{\strut{}$20$}}%
      \put(300,1670){\makebox(0,0)[r]{\strut{}$40$}}%
      \put(300,2073){\makebox(0,0)[r]{\strut{}$60$}}%
      \put(300,2476){\makebox(0,0)[r]{\strut{}$80$}}%
      \put(300,2879){\makebox(0,0)[r]{\strut{}$100$}}%
      \put(300,3282){\makebox(0,0)[r]{\strut{}$120$}}%
      \put(432,644){\makebox(0,0){\strut{}$0$}}%
      \put(916,644){\makebox(0,0){\strut{}$0.5$}}%
      \put(1399,644){\makebox(0,0){\strut{}$1$}}%
      \put(1883,644){\makebox(0,0){\strut{}$1.5$}}%
      \put(2366,644){\makebox(0,0){\strut{}$2$}}%
      \put(2850,644){\makebox(0,0){\strut{}$2.5$}}%
    }%
    \gplgaddtomacro\gplfronttext{%
      \csname LTb\endcsname
      \put(1995,1516){\makebox(0,0)[r]{\strut{}$\zeta_{S_1,k}$}}%
      \csname LTb\endcsname
      \put(1641,314){\makebox(0,0){\strut{}Time (s)}}%
      \put(1450,3612){\makebox(0,0){\strut{}Leading error estimator terms}}%
    }%
    \gplgaddtomacro\gplbacktext{%
      \csname LTb\endcsname
      \put(4015,864){\makebox(0,0)[r]{\strut{}$0$}}%
      \put(4015,1209){\makebox(0,0)[r]{\strut{}$25$}}%
      \put(4015,1555){\makebox(0,0)[r]{\strut{}$50$}}%
      \put(4015,1900){\makebox(0,0)[r]{\strut{}$75$}}%
      \put(4015,2246){\makebox(0,0)[r]{\strut{}$100$}}%
      \put(4015,2591){\makebox(0,0)[r]{\strut{}$125$}}%
      \put(4015,2937){\makebox(0,0)[r]{\strut{}$150$}}%
      \put(4015,3282){\makebox(0,0)[r]{\strut{}$175$}}%
      \put(4147,644){\makebox(0,0){\strut{}$0$}}%
      \put(4631,644){\makebox(0,0){\strut{}$0.5$}}%
      \put(5114,644){\makebox(0,0){\strut{}$1$}}%
      \put(5598,644){\makebox(0,0){\strut{}$1.5$}}%
      \put(6081,644){\makebox(0,0){\strut{}$2$}}%
      \put(6565,644){\makebox(0,0){\strut{}$2.5$}}%
    }%
    \gplgaddtomacro\gplfronttext{%
      \csname LTb\endcsname
      \put(5710,1884){\makebox(0,0)[r]{\strut{}$\zeta_{S,k}$}}%
      \csname LTb\endcsname
      \put(5710,1576){\makebox(0,0)[r]{\strut{}$\zeta_{T,k}$}}%
      \csname LTb\endcsname
      \csname LTb\endcsname
      \put(5356,314){\makebox(0,0){\strut{}Time (s)}}%
      \put(5158,3612){\makebox(0,0){\strut{}Time-accumulated error estimators}}%
    }%
    \gplbacktext
    \put(0,0){\includegraphics[width={432.00bp},height={216.00bp}]{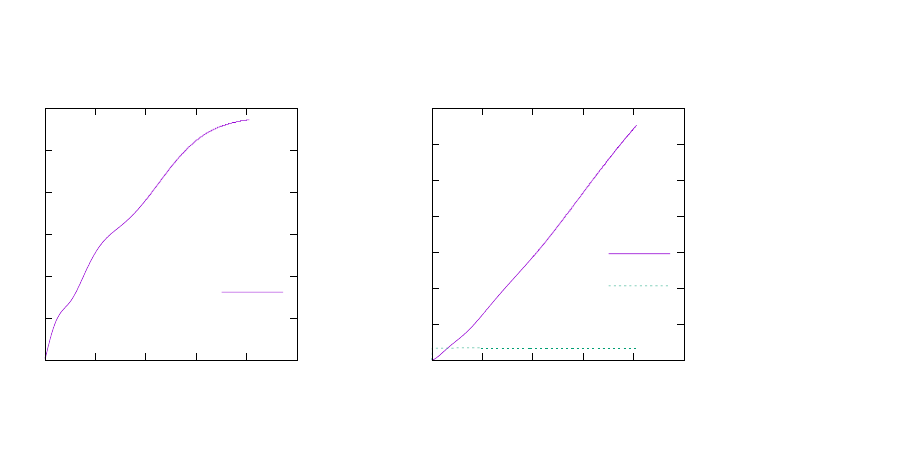}}%
    \gplfronttext
  \end{picture}%
\endgroup
\caption{Estimator terms in Case 3; $\zeta_k$ is not plotted since it grows exponentially.}\label{fig:case3_5}
\end{figure}
The full error bound is not shown in the plot as,  it grows exponentially, becoming too large for double precision arithmetic to represent already at $t=0.5$.

We remark that, if we had not used the exponential fitting technique, then we would have been
required to add enough reaction $\addreac$ to handle $\half\div\conv$, i.e., we would have 
required $\addreac = 4$, leading to an exponential term $\exp\left(\frac{8}{3}\timevar\right)$, 
and so the exponential fitting here has enabled us to slightly reduce the factor in the 
exponential. We note that there exist examples where this difference is
more substantial, particularly when $\conv \neq \grad\convgrad$ and $\div\conv\neq0$.
In this case, we could use the other freedom afforded us by the estimator, and alter the value of $\alpha$ to improve this behaviour. 
However, in our experience this is not usually useful in the case of a small diffusion
coefficient -- to have a measurable effect on the exponential term requires $\alpha$ to be very large and, in particular, to be of order $ \diffusivity^{-1}$.

\subsection*{Case 4}
Finally, we look at the case of a positive-divergence field with a non-zero curl part.
Taking 
\begin{equation*}
\conv = \left(\begin{array}{c}x\\x^2+y^2\end{array}\right) = \grad\left(\frac{x^2}{2} + x^2y\right) + \curl\left(-xy^2\right),
\end{equation*}
and choosing $\alpha=1$, we have that
\begin{multline*}
\half\left(\alpha\grad\convgrad-\grad\right)\cdot\left(\conv-\alpha\diffusivity\grad\convgrad\right) \\
= \half\left(\left(1-\diffusivity\right)\left(x^4+x^2 - 1 -2y\right) + \left(2-4\diffusivity\right)x^2y + \left(1-4\diffusivity\right)x^2y^2\right),
\end{multline*}
and, so, we add reaction 
\begin{equation*}
-2\left(\left(1-\diffusivity\right)\left(x^4+x^2 - 1 -2y\right) - \left(2-4\diffusivity\right)x^2y - \left(1-4\diffusivity\right)x^2y^2\right).
\end{equation*}
This leads to an exponential term of $\exp\left(8\left(1-\diffusivity\right)\timevar\right)$, resulting in the full estimator $\de{k}{}{}^2$ growing exponentially fast. However, the estimator terms discounted by this factor as shown in Figure~\ref{fig:case4_5} give a meaningful representation of the error. 

\begin{figure}
\hspace{.7cm}
\begingroup
  \makeatletter
  \providecommand\color[2][]{%
    \GenericError{(gnuplot) \space\space\space\@spaces}{%
      Package color not loaded in conjunction with
      terminal option `colourtext'%
    }{See the gnuplot documentation for explanation.%
    }{Either use 'blacktext' in gnuplot or load the package
      color.sty in LaTeX.}%
    \renewcommand\color[2][]{}%
  }%
  \providecommand\includegraphics[2][]{%
    \GenericError{(gnuplot) \space\space\space\@spaces}{%
      Package graphicx or graphics not loaded%
    }{See the gnuplot documentation for explanation.%
    }{The gnuplot epslatex terminal needs graphicx.sty or graphics.sty.}%
    \renewcommand\includegraphics[2][]{}%
  }%
  \providecommand\rotatebox[2]{#2}%
  \@ifundefined{ifGPcolor}{%
    \newif\ifGPcolor
    \GPcolorfalse
  }{}%
  \@ifundefined{ifGPblacktext}{%
    \newif\ifGPblacktext
    \GPblacktexttrue
  }{}%
  \let\gplgaddtomacro\g@addto@macro
  \gdef\gplbacktext{}%
  \gdef\gplfronttext{}%
  \makeatother
  \ifGPblacktext
    \def\colorrgb#1{}%
    \def\colorgray#1{}%
  \else
    \ifGPcolor
      \def\colorrgb#1{\color[rgb]{#1}}%
      \def\colorgray#1{\color[gray]{#1}}%
      \expandafter\def\csname LTw\endcsname{\color{white}}%
      \expandafter\def\csname LTb\endcsname{\color{black}}%
      \expandafter\def\csname LTa\endcsname{\color{black}}%
      \expandafter\def\csname LT0\endcsname{\color[rgb]{1,0,0}}%
      \expandafter\def\csname LT1\endcsname{\color[rgb]{0,1,0}}%
      \expandafter\def\csname LT2\endcsname{\color[rgb]{0,0,1}}%
      \expandafter\def\csname LT3\endcsname{\color[rgb]{1,0,1}}%
      \expandafter\def\csname LT4\endcsname{\color[rgb]{0,1,1}}%
      \expandafter\def\csname LT5\endcsname{\color[rgb]{1,1,0}}%
      \expandafter\def\csname LT6\endcsname{\color[rgb]{0,0,0}}%
      \expandafter\def\csname LT7\endcsname{\color[rgb]{1,0.3,0}}%
      \expandafter\def\csname LT8\endcsname{\color[rgb]{0.5,0.5,0.5}}%
    \else
      \def\colorrgb#1{\color{black}}%
      \def\colorgray#1{\color[gray]{#1}}%
      \expandafter\def\csname LTw\endcsname{\color{white}}%
      \expandafter\def\csname LTb\endcsname{\color{black}}%
      \expandafter\def\csname LTa\endcsname{\color{black}}%
      \expandafter\def\csname LT0\endcsname{\color{black}}%
      \expandafter\def\csname LT1\endcsname{\color{black}}%
      \expandafter\def\csname LT2\endcsname{\color{black}}%
      \expandafter\def\csname LT3\endcsname{\color{black}}%
      \expandafter\def\csname LT4\endcsname{\color{black}}%
      \expandafter\def\csname LT5\endcsname{\color{black}}%
      \expandafter\def\csname LT6\endcsname{\color{black}}%
      \expandafter\def\csname LT7\endcsname{\color{black}}%
      \expandafter\def\csname LT8\endcsname{\color{black}}%
    \fi
  \fi
    \setlength{\unitlength}{0.0500bp}%
    \ifx\gptboxheight\undefined%
      \newlength{\gptboxheight}%
      \newlength{\gptboxwidth}%
      \newsavebox{\gptboxtext}%
    \fi%
    \setlength{\fboxrule}{0.5pt}%
    \setlength{\fboxsep}{1pt}%
    \definecolor{tbcol}{rgb}{1,1,1}%
\begin{picture}(8640.00,4320.00)%
    \gplgaddtomacro\gplbacktext{%
      \csname LTb\endcsname
      \put(300,864){\makebox(0,0)[r]{\strut{}$0$}}%
      \put(300,1267){\makebox(0,0)[r]{\strut{}$20$}}%
      \put(300,1670){\makebox(0,0)[r]{\strut{}$40$}}%
      \put(300,2073){\makebox(0,0)[r]{\strut{}$60$}}%
      \put(300,2476){\makebox(0,0)[r]{\strut{}$80$}}%
      \put(300,2879){\makebox(0,0)[r]{\strut{}$100$}}%
      \put(300,3282){\makebox(0,0)[r]{\strut{}$120$}}%
      \put(432,644){\makebox(0,0){\strut{}$0$}}%
      \put(916,644){\makebox(0,0){\strut{}$0.5$}}%
      \put(1399,644){\makebox(0,0){\strut{}$1$}}%
      \put(1883,644){\makebox(0,0){\strut{}$1.5$}}%
      \put(2366,644){\makebox(0,0){\strut{}$2$}}%
      \put(2850,644){\makebox(0,0){\strut{}$2.5$}}%
    }%
    \gplgaddtomacro\gplfronttext{%
      \csname LTb\endcsname
      \put(1995,1516){\makebox(0,0)[r]{\strut{}$\zeta_{S_1,k}$}}%
      \csname LTb\endcsname
      \put(1641,314){\makebox(0,0){\strut{}Time (s)}}%
      \put(1450,3612){\makebox(0,0){\strut{}Leading error estimator terms}}%
    }%
    \gplgaddtomacro\gplbacktext{%
      \csname LTb\endcsname
      \put(4015,864){\makebox(0,0)[r]{\strut{}$0$}}%
      \put(4015,1209){\makebox(0,0)[r]{\strut{}$25$}}%
      \put(4015,1555){\makebox(0,0)[r]{\strut{}$50$}}%
      \put(4015,1900){\makebox(0,0)[r]{\strut{}$75$}}%
      \put(4015,2246){\makebox(0,0)[r]{\strut{}$100$}}%
      \put(4015,2591){\makebox(0,0)[r]{\strut{}$125$}}%
      \put(4015,2937){\makebox(0,0)[r]{\strut{}$150$}}%
      \put(4015,3282){\makebox(0,0)[r]{\strut{}$175$}}%
      \put(4147,644){\makebox(0,0){\strut{}$0$}}%
      \put(4631,644){\makebox(0,0){\strut{}$0.5$}}%
      \put(5114,644){\makebox(0,0){\strut{}$1$}}%
      \put(5598,644){\makebox(0,0){\strut{}$1.5$}}%
      \put(6081,644){\makebox(0,0){\strut{}$2$}}%
      \put(6565,644){\makebox(0,0){\strut{}$2.5$}}%
    }%
    \gplgaddtomacro\gplfronttext{%
      \csname LTb\endcsname
      \put(5710,2023){\makebox(0,0)[r]{\strut{}$\zeta_{S,k}$}}%
      \csname LTb\endcsname
      \put(5710,1715){\makebox(0,0)[r]{\strut{}$\zeta_{T,k}$}}%
      \csname LTb\endcsname
      \csname LTb\endcsname
      \put(5356,314){\makebox(0,0){\strut{}Time (s)}}%
      \put(5158,3612){\makebox(0,0){\strut{}Time-accumulated error estimators}}%
    }%
    \gplbacktext
    \put(0,0){\includegraphics[width={432.00bp},height={216.00bp}]{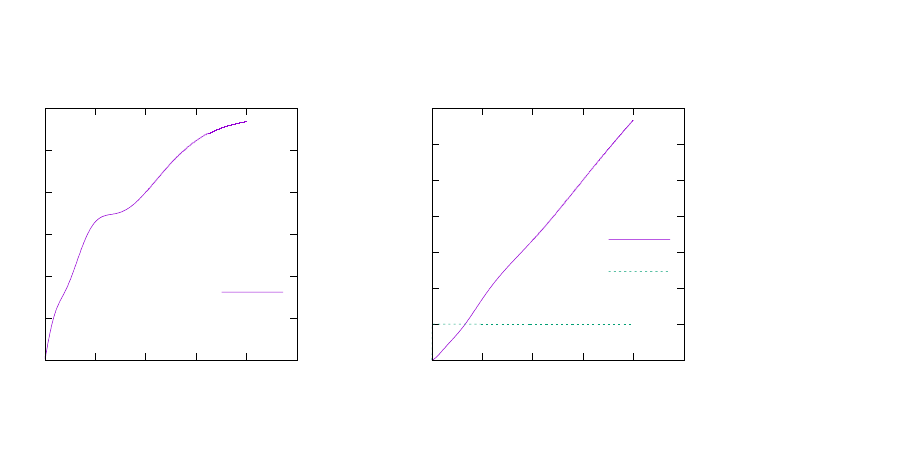}}%
    \gplfronttext
  \end{picture}%
\endgroup
\caption{Estimator terms in Case 4; $\zeta_k$ is not plotted since it grows exponentially.}\label{fig:case4_5}
\end{figure}

\section{The Boussinesq system and mantle convection simulations}\label{sec:Boussinesq}

The study of numerical modelling of mantle convection began in the late 1960s and 
early 1970s, with 2D finite difference codes such as those of Minear and Toks\"oz 
\cite{MinearToksoz1970}, Torrance and Turcotte \cite{Torrance1971}, Mckenzie et 
al. \cite{MckenzieRobertsWeiss1974}, and Schmeling and Jacoby \cite{SchmelingJacoby1981}. 
These approaches typically use the stream function formulation to eliminate the pressure 
from the Navier-Stokes equations and reduce 2D velocity vectors to scalars. More 
recent attempts to use finite differences have used staggered grids, e.g., \cite{gerya2003characteristics}. Spectral methods have been employed in mantle simulations as early as 1974 \cite{Young1974}, 
and enjoyed much popularity during the 1980s and early 1990s for both 3D 
Cartesian and spherical geometries, due to their power in splitting a 3D problem 
into several 1D problems, e.g., \cite{BercoviciSchubertGlatzmeier1989,TackleyStevensonGlatzmaierSchubert1993}. They have since largely fallen out of favour due to difficulties in handling 
large lateral heterogeneities in viscosity. Finite volume methods enjoyed a lot of popularity from the early 1990s, and continue to be used, e.g., the Stag3D code of Tackley \cite{Tackley2008}, 
but not to the same extent as finite element methods.

Finite element methods (FEM) have been used since the early 1980s, often solving 
for a stream function, e.g., \cite{HansenEbel1984}. Most FEM codes now solve instead for the primary variables of temperature, velocity,  and pressure. There are a growing number of codes that are well documented and  have been widely used in the mantle convection modelling community, as well as several newer codes that are relevant to this work. We refer the interested reader to  \cite{May2013} for an excellent discussion of the history of the FEM and the use of mesh adaptivity in geodynamics.

The problem that we consider here is derived from the infinite-Prandtl number limit of the Navier-Stokes equations, 
with the Boussinesq approximation, in which the buoyancy term arises only from the density variations caused by 
temperature variations. It is the most widely used basis model of the dynamics of the mantle temperature, velocity, and pressure.

Given an initial temperature field $\initu(\bx)$
and time- and position-dependent forcing term $\heating(\bx, \timevar)$, 
find $\u$, $\conv$, and $\pressure$
such that
\newlength{\myrightlen}
\newcommand{\setmyrightlen}[1]{\settowidth{\myrightlen}{\( \displaystyle
#1\)}}
\newcommand{\backup}{\hskip\myrightlen\mkern-7mu}
\newcommand{\backuptwo}{\hskip\myrightlen\mkern-16mu}
\newcommand{\backupthree}{\hskip\myrightlen\mkern-35mu}
\setmyrightlen{-\density(\u,\bx)\gravity}
\begin{equation}
\begin{split}
\left.
\begin{array}{rl}
\timederiv{\u} - \diffusivity\laplacian\u + \conv\cdot\grad\u 
&= \heating(\bx,\timevar) \\
-\div\left(2\viscosity(\u,\bx)\symgrad{\conv}\right) + \grad\pressure 
&= -\density(\u,\bx)\gravity \\
\div\conv
&= 0
\end{array}
\right\} 
&\text{ in } \domain \times \timeinterval, \label{eqn:boussinesq}\\
\u(\bx,\starttime) \hspace{2mm}
= \initu(\bx)\backuptwo &\text{ in } \domain,\\
\u \hspace{2mm}
= \dirdata(\bx,\timevar) \backupthree
&\text{ on } \dirbdy \times \timeinterval,\\
\diffusivity\deriv{\u}{\normal} \hspace{2mm}
= \neudata(\bx,\timevar) \backupthree
&\text{ on } \neubdy \times \timeinterval,
\\
\left.
\begin{array}{rl}
\conv\cdot\normal
&= 0 \\
\symgrad{\conv}\normal\times\normal 
&= 0
\end{array}\backup
\right\}
&\text{ in } \domainbdy \times \timeinterval, 
\end{split}
\end{equation}
where $\symgrad{\conv} \coloneqq \half(\grad\conv+\grad\conv[\transpose])$ is the symmetric gradient operator. No initial conditions for the velocity are required as the velocity is assumed to be in a static equilibrium with the temperature.

The first equation is the energy equation for the temperature $\u$; the second and third form the Stokes system for the velocity and pressure $(\conv,\pressure)$. 
The system  is driven by the forcing term $\heating=\heating(\bx,\timevar)$ and gravity $\gravity$ and depends on thermal diffusion, viscosity, and density here denoted by $\diffusivity$, $\viscosity$, and $\density$, respectively.
The thermal diffusion $\diffusivity$ is considered to be constant and the viscosity $\viscosity(\u,\cdot) \in \linftyspace$ with a positive minimum 
$\viscosity(\u,\cdot) \geq \minviscosity > 0$. 
For the  gravity vector $\gravity$ we use $\gravity=9.81e_r$, where 
$e_r$ is the radial unit vector (in the case of annular or shell geometries) or the unit 
downwards vector (in a box geometry). 

The Stokes system does not necessarily admit a unique solution in the case of a thick-shell domain relevant to the modelling of mantle convection.
Indeed, in this case, 
defining the three rigid body motions $\mathbf{v}^{(i)}, i=1,2,3$ by 
$\mathbf{v}^{(i)}(\bx) \coloneqq \mathbf{e}^{(i)} \times \bx$
where $\mathbf{e}^{(i)}$ is the unit vector in the $i$-th coordinate direction
and $(\conv(\bx),\pressure(\bx))$ a solution at time 
$\timevar\in\timeinterval$, gives us that 
$\conv+\sum_{i=1}^3 c_i \mathbf{v}^{(i)}$ is also a solution, for $c_i \in\reals$.
In addition, the pressure solution is only unique up to an additive constant.

To circumvent this, we introduce three natural spaces for this problem:
\begin{align*}
\velbigspace 
&\coloneqq
\left\{ \mathbf{w}\in\left[\Hone\right]^3 \suchthat \mathbf{w} \cdot\normal=0 \text{ on } \domainbdy \right\},
\\
\velspace 
&\coloneqq 
\left\{ \mathbf{w}\in\velbigspace \suchthat (\mathbf{w},\mathbf{v}^{(i)}) = 0 \text{ for } i=1,2,3\right\},
\\
\pressurespace &\coloneqq \left\{ q\in\ltwospace \suchthat (q,1) = 0 \right\},
\end{align*}
 we define the bilinear forms
\begin{align}
\left.
\begin{array}{rl}
\bilinearstokes{\conv}{\btest} 
&\coloneqq 
\innerprod{2\viscosity\left(\u,\bx\right)\symgrad{\conv}}{\symgrad{\btest}},\\
\bilinearpressure{\btest}{\pressure} 
&\coloneqq 
-\innerprod{\div\btest}{\pressure},
\end{array} \right.\label{eqn:stokesbilinear}
\end{align}
and we consider the weak formulation of the Stokes system: 
find $\conv\in\velspace$, $\pressure\in\pressurespace$, such that
\begin{align}
\begin{array}{rl}
\bilinearstokes{\conv}{\btest} + \bilinearpressure{\btest}{\pressure}
&= -\left(\density(\u,\bx)\gravity,\btest\right) \\
\bilinearpressure{\conv}{q} &= 0,
\end{array}
\label{eqn:stokesweak}
\end{align}
for all $(\btest,q) \in \velspace\times\pressurespace$. We have the following result from \cite[Lemma 1]{Tabata2002}.
\begin{lemma}\label{exist_bous}
Let $\domain$ be a spherical domain 
$\domain~=~\left\{\bx\in\domain \suchthat R_1 < \abs{\bx} < R_2\right\}$, 
and suppose that 
\begin{align*}
\density(\u,\bx)\gravity \in \left[\ltwospace\right]^3, \quad
\viscosity \in \linftyspace, \quad
\viscosity(\u,\bx) \geq \minviscosity > 0.
\end{align*}
Then, \eqref{eqn:stokesweak} has a unique solution in $\velspace \times \pressurespace$.\qed
\end{lemma}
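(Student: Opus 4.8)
The plan is to recast \eqref{eqn:stokesweak} as an abstract saddle-point problem on $\velspace \times \pressurespace$ and invoke the Babu\v{s}ka--Brezzi theory. First I would check the structural hypotheses: $\bilinearstokes{\cdot}{\cdot}$ is bounded on $\velbigspace \times \velbigspace$ because $\viscosity \in \linftyspace$ (apply Cauchy--Schwarz after pulling out $2\|\viscosity\|_{L^\infty(\domain)}$), $\bilinearpressure{\cdot}{\cdot}$ is bounded on $\velbigspace \times \pressurespace$ by Cauchy--Schwarz, and $\btest \mapsto -(\density(\u,\bx)\gravity,\btest)$ is a bounded linear functional on $\velbigspace$ since $\density(\u,\bx)\gravity \in [\ltwospace]^3$. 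It then remains to establish (i) coercivity of $\bilinearstokes{\cdot}{\cdot}$ on $\velspace$ (a fortiori on the kernel of the divergence constraint) and (ii) the inf-sup (LBB) condition for $\bilinearpressure{\cdot}{\cdot}$ on $\velspace \times \pressurespace$.

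For (i), the lower bound $\viscosity(\u,\bx) \ge \minviscosity > 0$ gives $\bilinearstokes{\btest}{\btest} \ge 2\minviscosity \norm{\symgrad{\btest}}{\domain}^2$, so the crux is a Korn-type inequality $\|\btest\|_{[\Hone]^3} \lesssim \norm{\symgrad{\btest}}{\domain}$ valid on $\velspace$. I would derive it from the second Korn inequality $\|\btest\|_{[\Hone]^3}^2 \lesssim \norm{\btest}{\domain}^2 + \norm{\symgrad{\btest}}{\domain}^2$ together with a Peetre--Tartar compactness argument. The kernel of $\symgrad{\cdot}$ consists of the infinitesimal rigid motions $\bx \mapsto \mathbf{a} + \mathbf{b}\times\bx$; imposing the no-penetration constraint $\mathbf{w}\cdot\normal = 0$ on the sphere $\domainbdy$ (where $\normal$ is parallel to $\bx$) forces $\mathbf{a}\cdot\bx \equiv 0$, hence $\mathbf{a}=0$, while $\mathbf{v}^{(i)}(\bx) = \mathbf{e}^{(i)}\times\bx$ always satisfies $\mathbf{v}^{(i)}\cdot\bx = 0$. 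Thus $\ker(\symgrad{\cdot}) \cap \velbigspace = \mathrm{span}\{\mathbf{v}^{(1)},\mathbf{v}^{(2)},\mathbf{v}^{(3)}\}$, and since $\velspace$ is precisely the $\ltwospace$-orthogonal complement of this span inside $\velbigspace$, any $\btest \in \velspace$ with $\symgrad{\btest}=0$ vanishes. The compact embedding $\velbigspace \hookrightarrow [\ltwospace]^3$ then upgrades Korn's inequality to the stated form on $\velspace$.

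For (ii), I would invoke the classical solvability of the divergence equation on the (smooth) shell domain: for every $q \in \pressurespace$ there exists $\btest \in [H^1_0(\domain)]^3$ with $\div\btest = q$ and $\|\btest\|_{[\Hone]^3} \lesssim \norm{q}{\domain}$ (Bogovskii's operator / right inverse of the divergence). Such $\btest$ lies in $\velbigspace$; subtracting its $\ltwospace$-orthogonal projection onto $\mathrm{span}\{\mathbf{v}^{(i)}\}$ produces $\tilde\btest \in \velspace$ with $\|\tilde\btest\|_{[\Hone]^3} \lesssim \norm{q}{\domain}$, and because the rotations $\mathbf{v}^{(i)}$ are divergence-free we still have $\div\tilde\btest = q$, so $-\bilinearpressure{\tilde\btest}{q} = \norm{q}{\domain}^2$, yielding the required inf-sup constant. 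With (i) and (ii) in place, the Babu\v{s}ka--Brezzi theorem delivers a unique $(\conv,\pressure) \in \velspace \times \pressurespace$ solving \eqref{eqn:stokesweak}. I expect the main obstacle to be step (i): correctly identifying the kernel of the symmetric gradient under the slip boundary condition on the spherical shell and executing the compactness argument, as it is exactly this geometric feature that forces the quotient by the rotation modes $\mathbf{v}^{(i)}$ built into the definition of $\velspace$; everything else is a routine application of standard Stokes theory.
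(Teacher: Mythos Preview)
Your proposal is a correct and standard application of Babu\v{s}ka--Brezzi theory: boundedness is immediate, coercivity on $\velspace$ follows from Korn's inequality once you identify $\ker(\symgrad{\cdot})\cap\velbigspace=\mathrm{span}\{\mathbf{v}^{(i)}\}$ on the shell (the translations being killed by the no-penetration condition), and the inf-sup bound follows from Bogovskii's operator together with the observation that the rotations are divergence-free. Note, however, that the paper does not supply its own proof of this lemma; it simply quotes the result from \cite[Lemma~1]{Tabata2002} and closes with a \qed, so there is nothing in the paper to compare against beyond the citation. Your argument is in line with what one expects the cited reference to contain.
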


We 
introduce the weak form of the full system \eqref{eqn:boussinesq}:
for each $\timevar\in\timeinterval$, find 
$(\u,\conv,\pressure) \in \Hone \times \velspace \times \pressurespace$ 
such that
\begin{equation}
\begin{array}{rl}
\left(\dtimederiv{\u}, \test\right) + \bilineara{\u}{\test} &= \linearl{\test}\\
\bilinearstokes{\conv}{\btest} + \bilinearpressure{\btest}{\pressure}
&= -\left(\density(\u,\bx)\gravity,\btest\right) \\
\bilinearpressure{\conv}{q} &= 0 \\
\u\evalat{\dirbdy} &= \dirdata \\
\u(\bx,0) &= \u[0](\bx), 
\end{array}
\label{eqn:bousweak}
\end{equation}
for all $(\test, \btest, q) \in \Hone \times \velspace \times \pressurespace$, 
where we note the implicit dependence of the bilinear form 
$\bilineara{\cdot}{\cdot}$ upon the convection variable $\conv(\bx,\timevar)$.

Once again, \cite[Theorem 3]{Tabata2002} shows the well-posedness of this system
on a spherical domain, under certain conditions. 

\begin{lemma} With the notation of Lemma \ref{exist_bous}, let $\viscosity : {\rm cl}{\domain} \times \reals \rightarrow (0,+\infty)$ and
\begin{align*}
\heating &\in \extendedlinftyspace{\starttime}{\finaltime}{\linftyspace}, \qquad\initu \in \linftyspace,\\
\dirdata &\in \extendedHonespace{\starttime}{\finaltime}{\Hpspace{\domainbdy}{\half}}\intersect\extendedlinftyspace{\starttime}{\finaltime}{\linftyspace[\domainbdy]}.
\end{align*}
Then, there exists a solution $(\u,\conv,\pressure)$ of \eqref{eqn:bousweak},
\begin{align*}
\conv&\in\extendedlinftyspace{\starttime}{\finaltime}{{\left[\Honespace[\domain]\right]}^3}, \qquad\pressure\in\extendedlinftyspace{\starttime}{\finaltime}{\ltwospace},\\
\u&\in\extendedltwospace{\starttime}{\finaltime}{\Honespace[\domain]}\intersect\extendedlinftyspace{\starttime}{\finaltime}{\linftyspace},
\end{align*}
and, if 
$
\conv\in\extendedlinftyspace{\starttime}{\finaltime}{{\left[\sobolevspace{1}{\infty}\right]}^3},
$
then, the solution is unique.\qed
\end{lemma}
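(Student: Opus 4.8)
The plan is to treat the coupled weak formulation \eqref{eqn:bousweak} as a fixed-point problem in the temperature variable, exploiting that the Stokes subsystem is quasi-stationary and hence, for a frozen temperature, reduces to a linear elliptic problem solvable by Lemma~\ref{exist_bous}. First I would fix a candidate temperature $u\in\extendedlinftyspace{\starttime}{\finaltime}{\linftyspace}$ and, for a.e.\ $\timevar\in\timeinterval$, apply Lemma~\ref{exist_bous}---whose hypotheses hold since $\density(u,\cdot)\gravity\in[\ltwospace]^3$ and $\viscosity(u,\cdot)\ge\minviscosity>0$ pointwise---to obtain the unique pair $(\conv,\pressure)(\timevar)\in\velspace\times\pressurespace$ solving \eqref{eqn:stokesweak}; this defines a solution operator $\mathcal{S}\colon u\mapsto\conv$. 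Composing with the solution operator $\mathcal{C}\colon\conv\mapsto u$ of the linear convection--diffusion problem~\eqref{eqn:convdiffweak}, which is well-posed by Lemma~\ref{lemma:convdiffwellposed} since $\div\conv=0$ and $\conv\in\extendedlinftyspace{\starttime}{\finaltime}{[\Honespace[\domain]]^3}\hookrightarrow\extendedltwospace{\starttime}{\finaltime}{[\lpspace{3}]^3}$, yields a map $\mathcal{T}=\mathcal{C}\circ\mathcal{S}$ whose fixed points are exactly the temperature components of solutions of \eqref{eqn:bousweak}.

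The second step is to establish the a priori bounds that make $\mathcal{T}$ amenable to a Schauder-type argument. Because the energy equation carries no zeroth-order term and $\div\conv=0$, a maximum-principle / Stampacchia truncation argument bounds $u$ in $\extendedlinftyspace{\starttime}{\finaltime}{\linftyspace}$ in terms of the data $\heating$, $\initu$ and $\dirdata$ \emph{independently of} $\conv$, pinning down an invariant ball $\mathcal{B}$. Testing the momentum equation with $\conv\in\velspace$, using the coercivity $2\viscosity\ge2\minviscosity$ and Korn's inequality on $\velspace$ (valid precisely because $\velspace$ excludes the rigid motions), gives $\|\conv(\timevar)\|_{[\Honespace[\domain]]^3}\lesssim\|\density(u,\cdot)\gravity\|_{\ltwospace}$, hence a uniform $\extendedlinftyspace{\starttime}{\finaltime}{[\Honespace[\domain]]^3}$ bound on $\mathcal{B}$, while the Stokes inf--sup condition on $\velbigspace\times\pressurespace$ controls $\pressure$ in $\extendedlinftyspace{\starttime}{\finaltime}{\ltwospace}$. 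Feeding this velocity bound back into the energy equation and testing with $u$ minus a lift of $\dirdata$ yields the parabolic energy estimate $u\in\extendedltwospace{\starttime}{\finaltime}{\Honespace[\domain]}$ together with $\partial_t u\in\extendedltwospace{\starttime}{\finaltime}{\Hpspace{\domain}{-1}}$.

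The third step is compactness and continuity of $\mathcal{T}$ restricted to $\mathcal{B}$. The uniform bounds on $u$ and $\partial_t u$ give, via the Aubin--Lions--Simon lemma, precompactness of $\mathcal{T}(\mathcal{B})$ in $\extendedltwospace{\starttime}{\finaltime}{\ltwospace}$; continuity of $\mathcal{T}$ in this topology follows from the continuity of $\density(\cdot,\bx)$ and $\viscosity(\cdot,\bx)$ in the temperature argument, combined with the linear stability of the Stokes and convection--diffusion solves (dominated convergence handling the coefficient perturbations). Schauder's fixed-point theorem then delivers a fixed point $u\in\mathcal{B}$ of $\mathcal{T}$, and the associated $(\conv,\pressure)$ has the asserted regularity.

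I expect the genuine difficulties to be the two a priori ingredients: the $\linftyspace$-in-space bound on $u$ uniform in the merely-$H^1$ (not Lipschitz) advection field---this is exactly where the absence of a reaction term and $\div\conv=0$ are essential, and where the mixed Dirichlet--Neumann data forces a careful truncation---and securing enough compactness in the time direction; the rest is a routine composition of the two cited linear well-posedness results. For the conditional uniqueness statement, under the extra regularity $\conv\in\extendedlinftyspace{\starttime}{\finaltime}{[\sobolevspace{1}{\infty}]^3}$ I would subtract two solutions: testing the temperature difference with itself makes the convective term vanish after an integration by parts (using $\div\conv=0$ and $u_2\in\linftyspace$), while testing the Stokes difference with the velocity difference, the bound $\symgrad{\conv_2}\in[\linftyspace]^{3\times3}$ is precisely what absorbs the term $\viscosity(u_1,\cdot)-\viscosity(u_2,\cdot)$; assuming local Lipschitz continuity of $\viscosity$ and $\density$ in the temperature, this closes a Gr\"onwall inequality with zero initial datum, forcing the two solutions to coincide and then $\pressure_1=\pressure_2$ by inf--sup.
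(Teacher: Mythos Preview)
The paper does not supply its own proof of this lemma: the statement is immediately preceded by the sentence ``Once again, \cite[Theorem 3]{Tabata2002} shows the well-posedness of this system on a spherical domain, under certain conditions,'' and the lemma is closed with a bare \qed. It is quoted as a black-box result from the cited reference, in exactly the same way as Lemma~\ref{lemma:convdiffwellposed} and Lemma~\ref{exist_bous} earlier in the paper.

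Your proposal is therefore not competing against any argument in the paper itself. That said, the strategy you outline---freeze $u$, solve the quasi-stationary Stokes system via Lemma~\ref{exist_bous}, feed the resulting divergence-free velocity into Lemma~\ref{lemma:convdiffwellposed}, derive a velocity-independent $L^\infty$ bound on $u$ by Stampacchia truncation (using $\div\conv=0$ and $\conv\cdot\normal=0$ on $\partial\Omega$), close with Aubin--Lions compactness and Schauder, and obtain conditional uniqueness by a Gr\"onwall argument once $\symgrad{\conv}\in L^\infty$---is the standard route for this class of Boussinesq systems and is, in outline, how the result in \cite{Tabata2002} is obtained. Your identification of the two genuinely delicate points (the $L^\infty$ maximum-principle bound uniform over merely $H^1$ velocities, and the time-compactness step) is accurate. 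Note only that your uniqueness sketch tacitly assumes local Lipschitz continuity of $\viscosity$ and $\density$ in the temperature variable; the lemma as stated does not make this explicit, so you would need to flag it as an additional standing hypothesis.
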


\subsection{Discretisation of the Boussinesq system}\label{sec:disc_bous}

The discretisation of the energy equation by the discontinuous Galerkin method has already been discussed above. For the Stokes system, we employ standard Taylor-Hood finite elements. To that end, we  introduce the following spaces for the discrete velocity and pressure: for $n=0,1,\ldots,\ntimesteps$ and for $k \geq 2$ let
\[
\velspacehk^n
\coloneqq 
\left[\pwpolyspace[n]{k}\right]^\spacedim,\quad\pressurespacehkminusone^n 
\coloneqq 
\left\{q\in\pwpolyspace[n]{k-1} \suchthat q\in\Czerospace[\domain] \right\}.
\]

Defining the discrete versions of the bilinear forms $\bilinearstokes{\cdot}{\cdot}$ and $\bilinearpressure{\cdot}{\cdot}$,
\begin{align*}
\bilinearstokesdisc{\conv}{\btest} 
&\coloneqq 
\sum_{\cell\in\tria[n]} \cellinnerprod{2\viscosity\left(\uh[n],\bx\right)\symgrad{\conv}}{\symgrad{\btest}},\\
\bilinearpressuredisc{\btest}{\pressure} 
&\coloneqq 
-\sum_{\cell\in\tria[n]}  \cellinnerprod{\div\btest}{\pressure},
\end{align*}
we state the discretisation of the Stokes problem as: 
find $(\convh[n],\pressureh[n])\in\velspacehk^n\times\pressurespacehkminusone^n$, such that 
\begin{align}
\begin{array}{rl}
\bilinearstokesdisc{\convh[n]}{\btesth} + \bilinearpressuredisc{\btesth}{\pressureh[n]}
&= -\left(\density(\uh[n],\bx)\gravity,\btesth\right) \\
\bilinearpressuredisc{\convh[n]}{q_h} &= 0,
\end{array}
\label{eqn:stokesdiscrete}
\end{align}
for all $(\btesth,q_h) \in \velspacehk\times\pressurespacehkminusone$.

The well-posedness of this formulation is guaranteed as the chosen Taylor-Hood  finite element pair satisfies the  \emph{discrete inf-sup condition} \cite{BercovierPironneau1979,Verfurth1984error,GiraultRaviart1986,BrezziFortin1991}.

We now discuss the solution of the coupled energy-Stokes system. For computational tractability in large scale simulations, we employ a simple scheme that alternates between the numerical solution of 
\eqref{eqn:convdifffullydiscrete} and \eqref{eqn:stokesdiscrete} in the following 
manner.
Given an initial condition on the temperature, $\uh[0] = \inituh$, we use this to 
solve \eqref{eqn:stokesdiscrete} for $(\convh[0],\pressureh[0])$, with $\uh[0]$ used 
to evaluate $\viscosity(\uh[n],\bx)$ and $\density(\uh[n],\bx)$. Having established the 
initial convection field in this way, this is then used when timestepping forward:
at each timestep $\timek[n]$, we solve the convection-diffusion problem 
\eqref{eqn:convdifffullydiscrete} for $\uh[n]$
with the {previous convection field} $\conv[n-1]$ used to evaluate the term 
$\conv\cdot\grad\u$ in the bilinear form $\bilinearahblank$. We are then in turn 
able to employ $\uh[n]$ in solving \eqref{eqn:stokesdiscrete} for $\convh[n]$ and 
$\pressureh[n]$. 

\section{Adaptive resolution of Boussinesq system}\label{sec:numerics_mantle}

We test the method proposed in Section~\ref{sec:Boussinesq}  for the solution of the Boussinesq equations. In all cases the fluid part is discretised using Taylor-Hood  elements as described in Section~\ref{sec:disc_bous} employ adaptivity, driven by the error estimator developed for the convection-diffusion energy equation.
In practice, we use only the term $\deSsub{1}{n}^2$ to mark elements for refinement and coarsening, viz.,
\begin{align}
\indicator{n}{\cell}^2 &\coloneqq  \cellweight^2 \ltwos{\Ak[n] + \diffusivity\laplacian\uh[n] - \conv[n] \cdot \nabla\uh[n] -\addreac[n]\uh[n]}{\cell}^2
\nonumber\\
 &\phantom{\coloneqq}+  \sumoverinternaledgesaroundcell \edgepatchweight \ltwos{\jump{\diffusivity \nabla \uh[n]}}{\edge}^2 
\nonumber\\
&\phantom{\coloneqq}+
\sumoveredgesaroundcell \left(
\frac{\penal\diffusivity}{\edgediam}\left(\weightmax{\edgepatch} + \edgepatchvarweight\penal\diffusivity 
+ \frac{\weightmax{\edge}\alpha^2\diffusivity\maxabs[\edge]{\grad\convgrad}^2}{\correctedltwomin{\edgepatch}}\right) +
\edgepatchweight\linftys{ \conv }{\edge}^2 \right.
\nonumber\\
&\left.\phantom{\ +\sumoveredges \left(\right.}+ \edgediam\linftysw{\correctedltwo}{\edgepatch}
 +  \frac{\weightmax{\edgepatch}\edgediam}{\diffusivity} \linftys{\conv-\alpha\diffusivity\grad\convgrad}{\edgepatch}^2\right) \ltwos{\jump{\uh[n]}}{\edge}^2.
\label{eq:SS}
\end{align}

We employ refinement/coarsening either by fraction of total error or by fraction of cells strategy for adapting the mesh. A pre-defined refinement percentage value (in our case, 10\%) and coarsening percentage value (respectively, 5\%) is set. Then, in the case of total error strategy cells are marked for refinement, from highest indicator to lowest, until the sum of the indicator values reaches the refinement percentage value. Similarly, the lowest-indicator cells are marked for coarsening, until the sum of indicator values matches the coarsening percentage value. Instead, in the case of fraction of cells strategy, the pre-defined percentage of cells are marked for refinement and coarsening.  The fraction of total error strategy offers the ability to ensure a certain amount of  error is refined per adaptivity step, but is  difficult to use in the case where the total number of cells is required to  be limited. On the other hand, the fraction of cells strategy  has the benefit of offering  greater control over the number of cells in the simulation, but offers less in the way of user-defined control of error.

The discretisation method and estimator discussed in this chapter has been implemented within \aspect{}~\cite{KronbichlerHeisterBangerth2012,Heiste2017,Bangerth2018}. Built upon the \dealii{} C++ library, \aspect{} is a community-developed and maintained mantle convection distributed memory simulation code, with a focus on extensibility and research usability.
We exploit this setting to test our approach against the state-of-the-art methods used in \aspect{}. 

\subsection{van Keken benchmark}
We consider the widely used isoviscous Rayleigh-Taylor thermochemical convection benchmark from~\cite{keken1997comparison}, cf., also the \aspect{} manual~\cite{Bangerth2018}. In this two-dimensional example, the thermal expansion is set to zero and thus the temperature is a passively advected field.  An advantage of the discontinuous Galerkin method is that it can seamlessly be applied in the pure transport case, thus, no changes in the method are required. We shall test the ability of the proposed estimator to track the sharp layers developing in this regime.

We consider the system~\eqref{eqn:boussinesq} with domain $
\domain=(0,0.9142)\times(0,1)
$ and for $\timeinterval=[0,2000]$. We set  $\diffusivity=0$ and $ \heating(\bx,\timevar)=0$ in the first equation in~\eqref{eqn:boussinesq} and set $\viscosity=100$ and $\density(\u,\bx)=10^6 \u$.
The system is initialised with a base of warm material below a colder material, with a small perturbation imposed on the interface to reliably initiate a convective flow. To this end, we set 
\[
\initu(\bx)= 
\left\{
\begin{array}{ll}
1 &\text{if } y<0.2(1+0.1\cos(\frac{\pi x}{0.9142}));\\
0 & \text{otherwise.}
\end{array}
\right.
\] 
We consider fixed Dirichlet boundary conditions for the temperature, compatible with the initial field shown in Figure~\ref{fig:vanKekeninit}. 
\begin{figure}[h!]
\centering
\includegraphics[height=0.3\textheight]{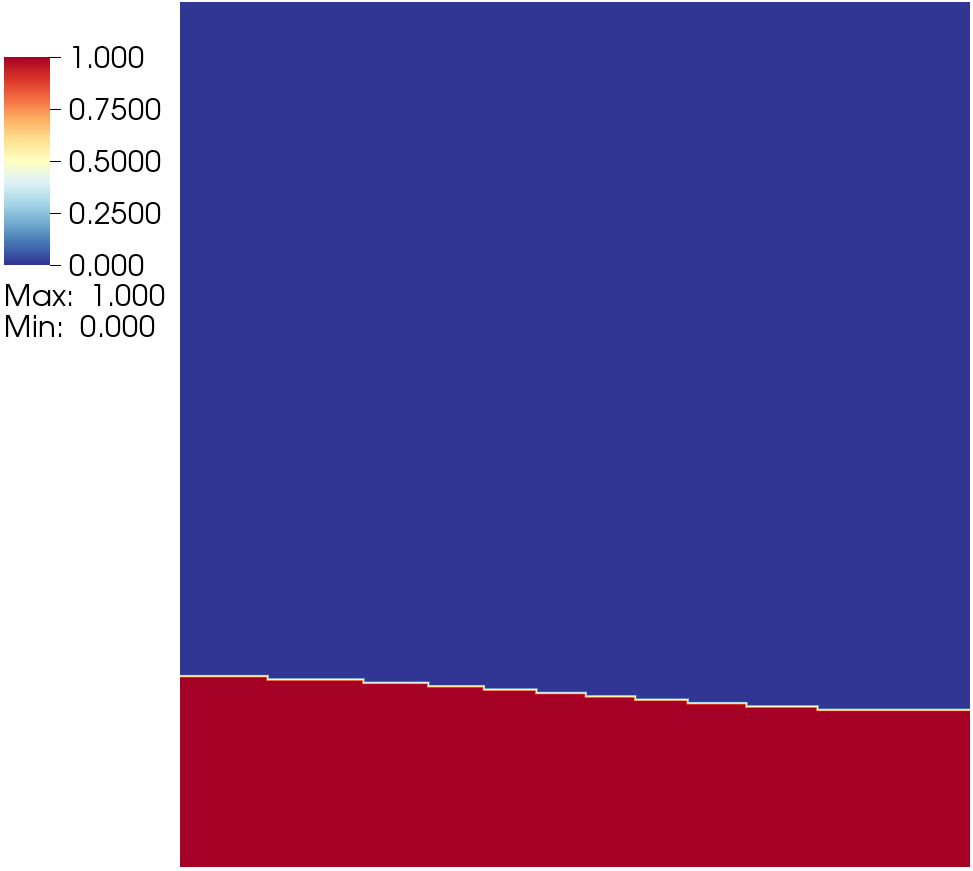}
\caption[Initial composition distribution in the van Keken benchmark]{The initial distribution of the temperature in the van~Keken isoviscous composition benchmark.}
\label{fig:vanKekeninit}
\end{figure} 
As a result,  the boundary conditions jump from 0 to 1 where the prescribed initial temperature field jumps on the left and right boundaries. Note that the resulting temperature transport initial and boundary value  problem can be interpreted as a \emph{compositional} equation for the warm material, initially sitting at the bottom of the domain. As such, the temperature is sometime referred to as compositional field.

The discretisation of the compositional field by the dG method is first compared with that obtained with a standard artificial diffusion continuous finite elements on a fixed, uniform grid. 
Figure \ref{fig:vanKeken} demonstrates that the dG method can more effectively conserve the sharp interfaces of the composition field, resulting in less `smearing' of the field as time increases.
\begin{figure}[h!]
\centering
	\centering
    \includegraphics[height=0.31\textheight]{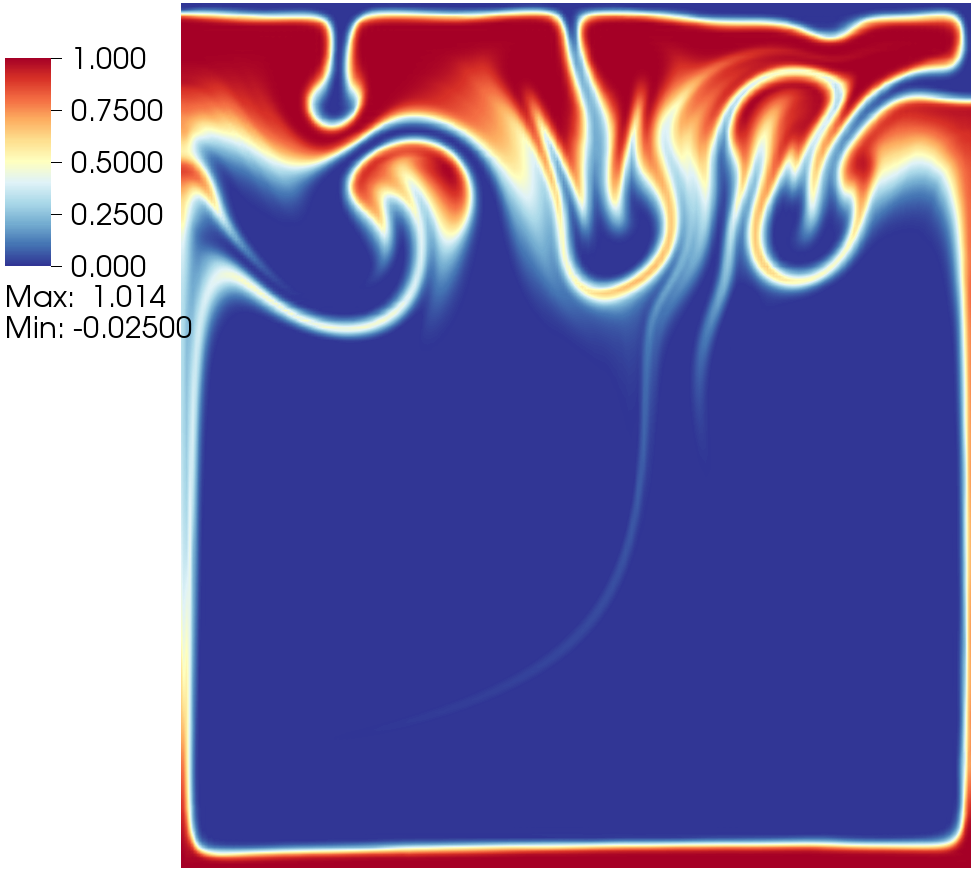}
 %
    \includegraphics[height=0.31\textheight]{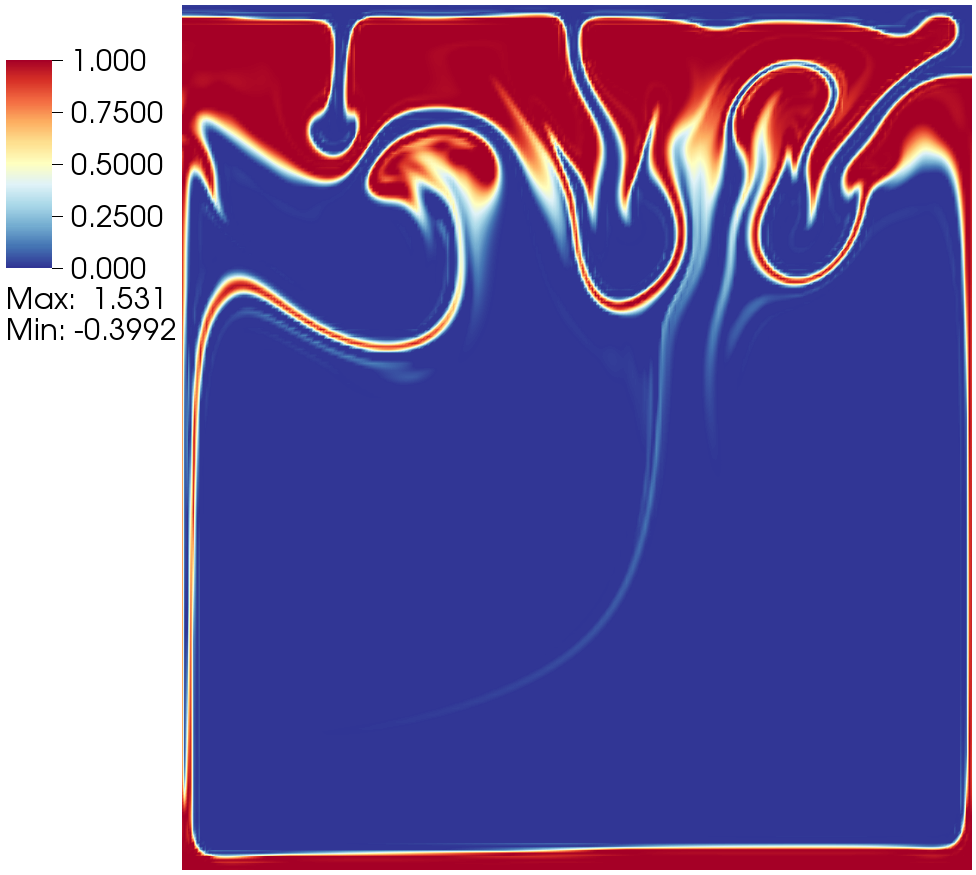}
  \caption[van Keken benchmark: comparison between FE and dG solution]{van~Keken isoviscous composition benchmark: comparison between FE (left) and dG (right) solution. Fixed rectangular mesh refined 7 times. Solution at final time $\timevar = 2000$.}
  \label{fig:vanKeken}
\end{figure} 
On the other hand,  the dG method produces overshoots and undershoots around the discontinuities, a clear evidence that the mesh size  is not fully resolving the sharp solution's layers and of the necessity of mesh refinement.
We note that the dG method can, in principle, also naturally incorporate flux limiters within its numerical flux functions, to limit overshoots and undershoots. Such non-linear stabilisation techniques are implemented in
\aspect{} \cite{he2016}, limited to the case of divergence-free flow,
 building on the methods introduced in \cite{zhang2010positivity,zhang2013maximum}. Here, we opt not to use such limiters, in an effort to separate the effect of the dG method from that of the limiter.

Figure \ref{fig:ch5_adapt} shows the solution and mesh produced by the adaptive algorithm driven by~\eqref{eq:SS} as error indicator, employing the fraction of cells marking strategy. 
\begin{figure}[h!]
\centering
\begin{subfigure}[b]{.48\linewidth}
\includegraphics[height=0.28\textheight]{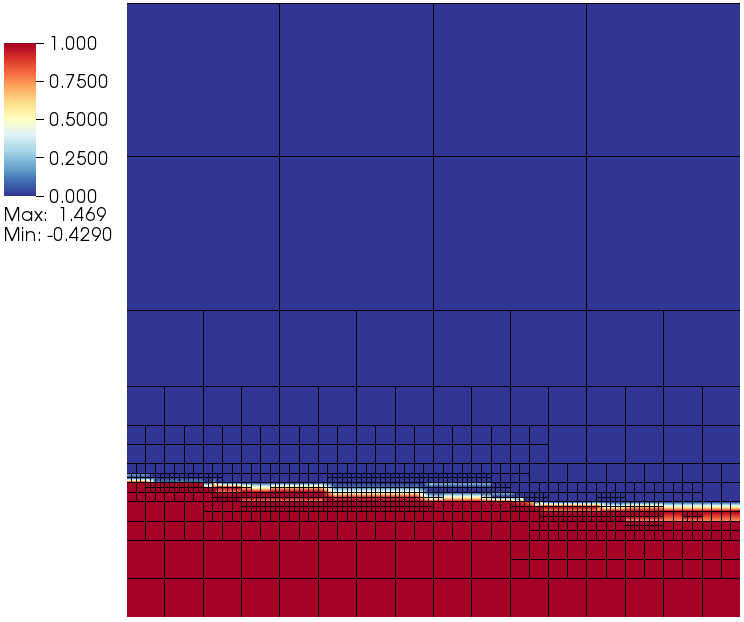}
\caption{$\timevar=0$}
\end{subfigure}
\centering
\begin{subfigure}[b]{.48\linewidth}
\includegraphics[height=0.28\textheight]{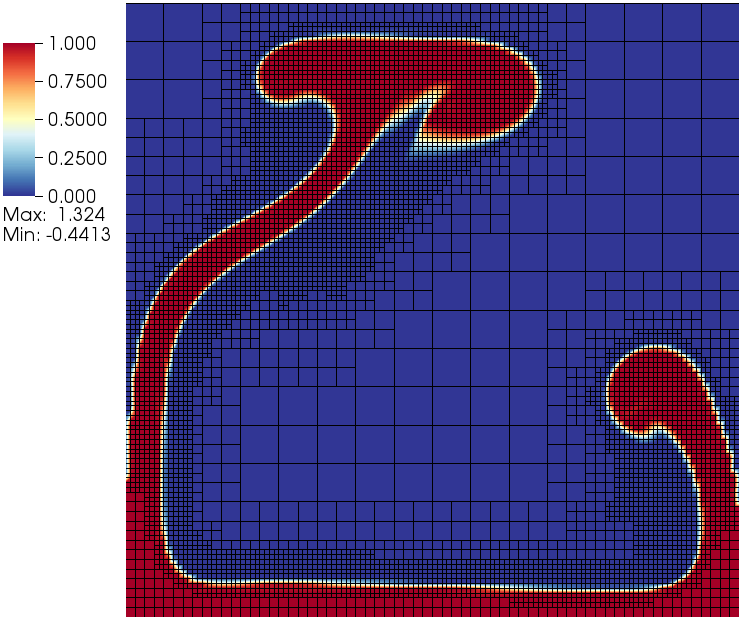}
\caption{$\timevar=1.2$}
\end{subfigure}
\begin{subfigure}[b]{.48\linewidth}
\includegraphics[height=0.28\textheight]{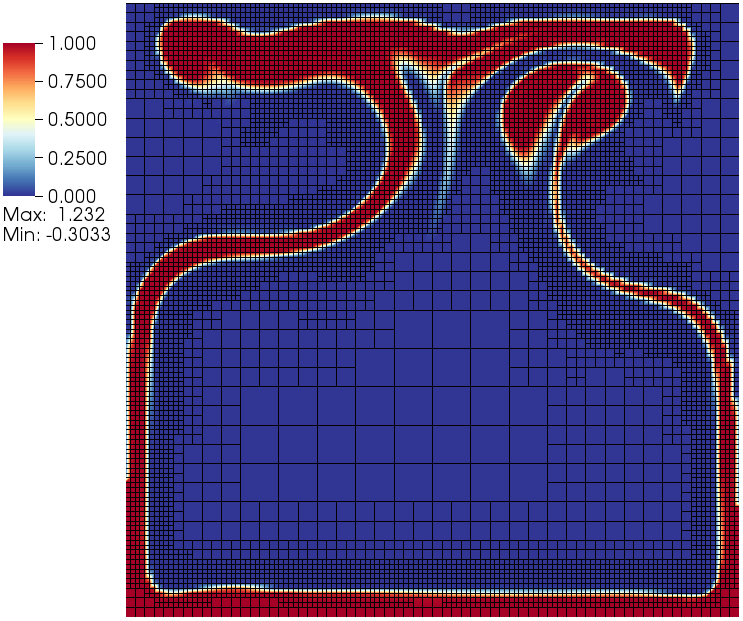}
\caption{$\timevar=2.4$}
\end{subfigure}
\begin{subfigure}[b]{.48\linewidth}
\includegraphics[height=0.28\textheight]{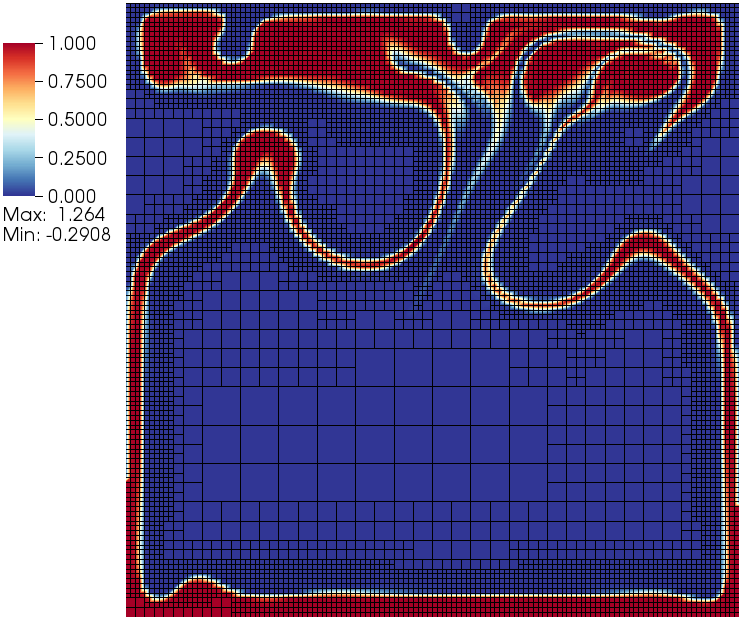}
\caption{$\timevar=3.6$}
\end{subfigure}
\caption[vanKeken_adaptive]{Adaptive simulation of the van Keken benchmark: temperature spatial distribution and adaptive meshes.}\label{fig:ch5_adapt}
\end{figure}
The adaptive algorithm accurately represents the sharp solution layers with reduced complexity, as can be clearly seen from Figure~\ref{fig:ch5_adapt_zoom} focusing on the upper-right portion of the domain.
\begin{figure}[h!]
\hspace{.1cm}
\begin{subfigure}[b]{.5\linewidth}
\includegraphics[height=0.3\textheight]{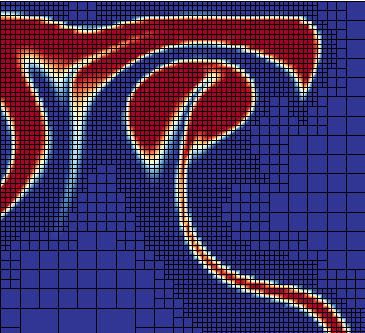}
\caption{$\timevar=2.4$}
\end{subfigure}
\begin{subfigure}[b]{.5\linewidth}
\includegraphics[height=0.3\textheight]{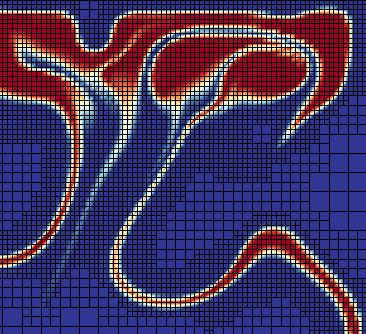}
\caption{$\timevar=3.6$}
\end{subfigure}
\caption[vanKeken_adaptive]{Zoom of the upper-right portion of the second row pictures in Figure~\ref{fig:ch5_adapt}.}\label{fig:ch5_adapt_zoom}
\end{figure}
However, albeit reduced, undershoots and overshoots are still present.
These may be reduced by refining more aggressively the initial mesh and/or applying flux limiters as mentioned above. 
\subsection{Three-dimensional test case}

We consider one of the three-dimensional test cases from the \aspect{} manual~\cite{Bangerth2018}. 
On the unit cube space domain $\Omega=[0,1]^3$ and with final time $T=0.5$, we solve problem~\eqref{eqn:boussinesq} with $\diffusivity=\viscosity=1$, $\density=1-T$, and $\heating=0$. Initial conditions for the temperature are set as a linear profile with a small perturbation, namely $\initu(\bx)=1-x_3-10^{-2}\cos(\pi x_1) \sin(\pi x_3) x_2^3$. Time-independent Dirichlet boundary conditions compatible with the initial condition are set on the bottom and top side of the cube while homogeneous Neumann conditions are fixed on all vertical sides.

We compare the following three adaptive methodologies: 
\begin{itemize}
\item the standard conforming finite element method stabilised by the  entropy viscosity method~\cite{GuermondPasquettiPopov2011} with Kelly error indicator;
\item the dG method with Kelly error indicator;
\item the dG method with the error indicator~\eqref{eq:SS}.  
\end{itemize}
In each case, the same  fraction of total error marking strategy is used. The so-called  Kelly error indicator~\cite{kelly1983posteriori}  is an \emph{ad hoc} widely-used error indicator among $h$-refinement codes: it employs the jump on the normal flux across element faces only, corresponding to \eqref{normal_flux_jump} without the weight.  

To simplify the error indicator~\eqref{eq:SS} within \aspect{}, we consider the modifications detailed in Section~\ref{sec:imple}. We compute~\eqref{eq:SS} to drive the mesh adaptivity.  We note that the union mesh would only be required for the computation of the projection $\ltwoproj[k]^n\uh[n-1]$ appearing in the factor
\begin{equation*}
\Ak[n] = \ltwoproj[n]\left(\heating[n]+\addreac[n]\uh[n]\right) - \left(\uh[n]-\ltwoproj[n]\uh[n-1]\right)/\timestep{n}.
\end{equation*}
To avoid forming the union mesh altogether, we replace the projection $ \ltwoproj[n]$ by the nodal interpolant $\interp{n}$ onto $\tdgspace[n]$.

In Figure \ref{fig:ex4-fig1} we display a snapshot of the temperature solution obtained with our approach. Those obtained with other approaches are indistinguishable visually and, thus, omitted for brevity. 
\begin{figure}
\centering
\includegraphics[height=0.4\textheight]{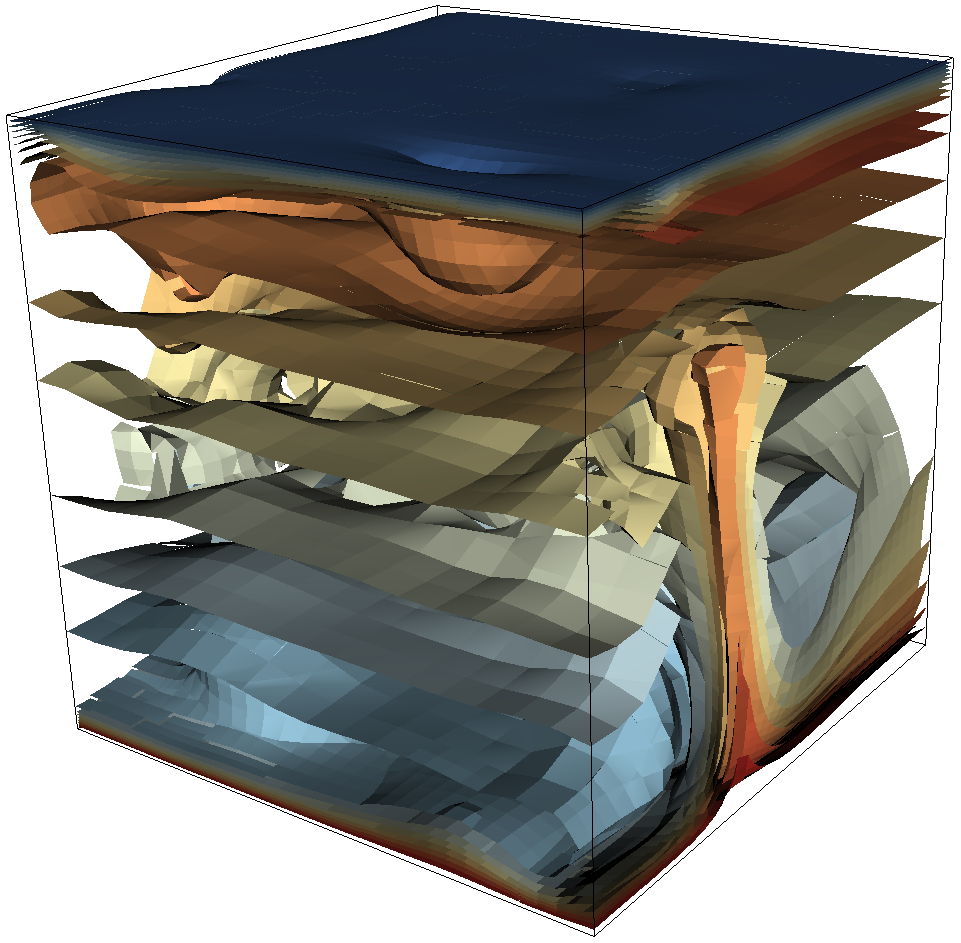}
\caption{Isocontours of a temperature solution obtained with the IPDG method with the newly developed error indicator.}\label{fig:ex4-fig1}
\end{figure}

\begin{figure}
	\centering
		\centering
		\includegraphics[height=0.45\textheight]{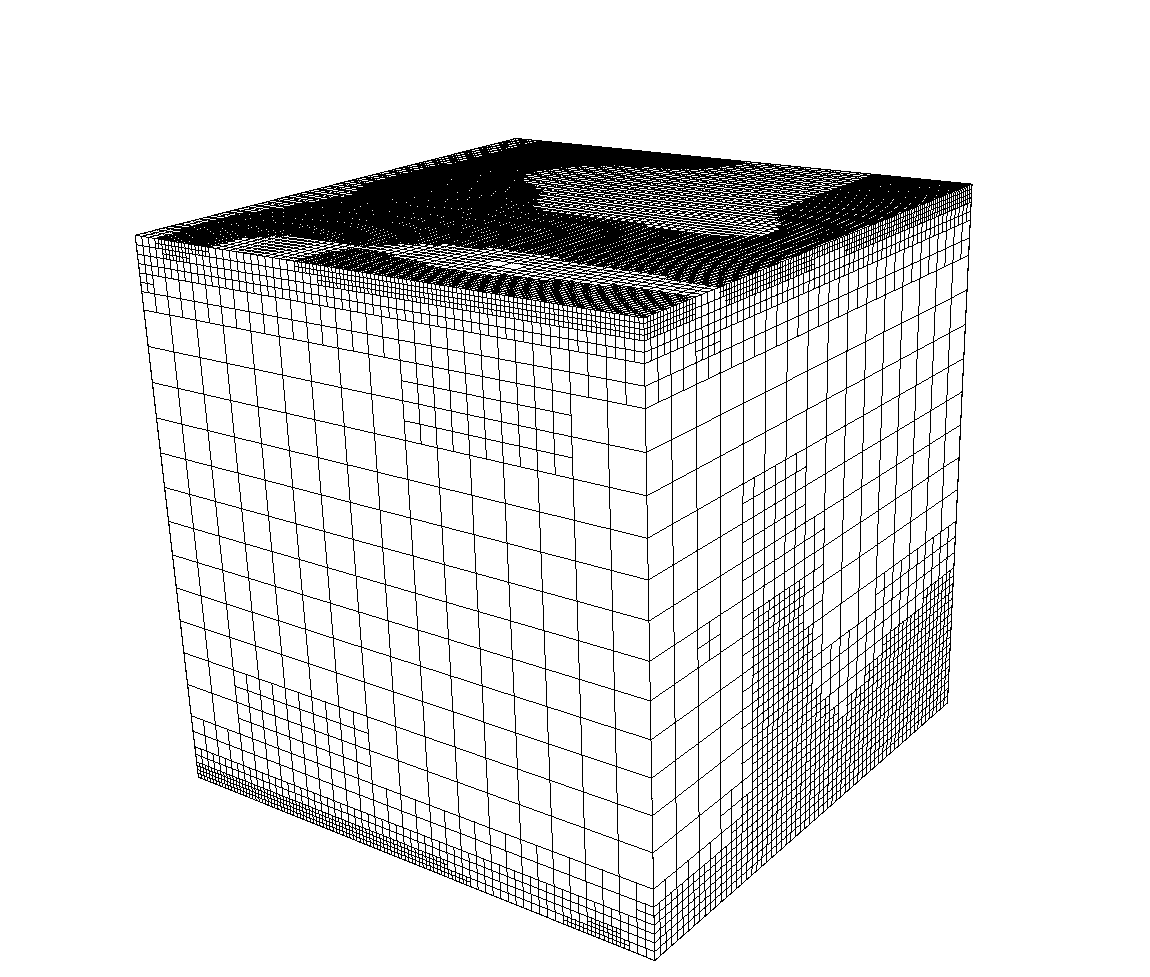}
		\caption{Outer mesh generated by the  FEM with the Kelly indicator.}
\label{fig:ex4-fig2-FE_K}
\end{figure}
\begin{figure}
	\centering
		\centering
		\includegraphics[height=0.45\textheight]{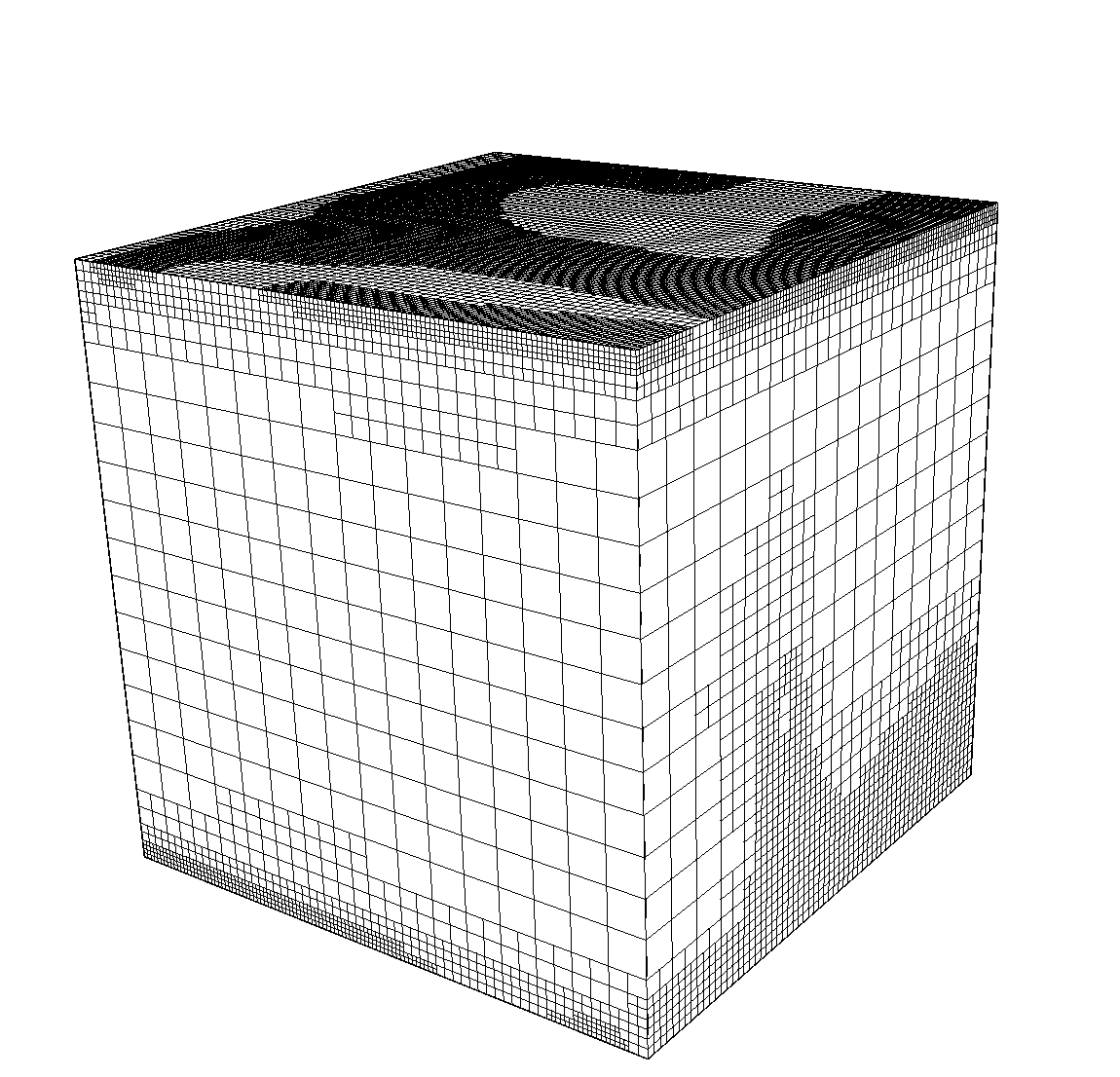}
		\caption{Outer mesh generated by the dG method with the Kelly indicator.}
\label{fig:ex4-fig2-DG_K}
\end{figure}
\begin{figure}
		\centering
		\includegraphics[height=0.45\textheight]{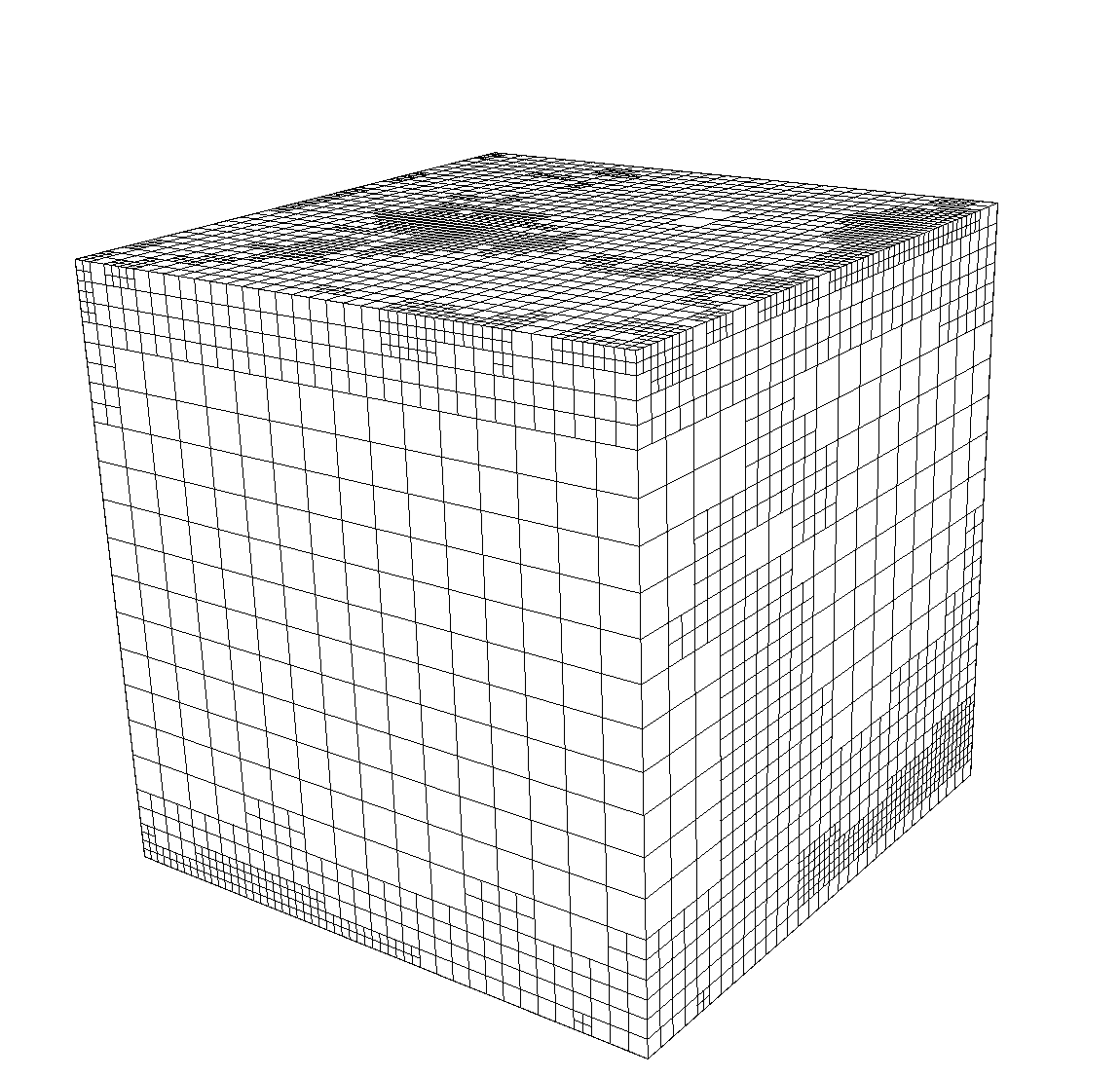}
	\caption{Outer mesh generated by the dG method with the derived indicator.}\label{fig:ex4-fig2-DG-D}
\end{figure}

\begin{figure}
	\centering
	\includegraphics[height=0.32\textheight]{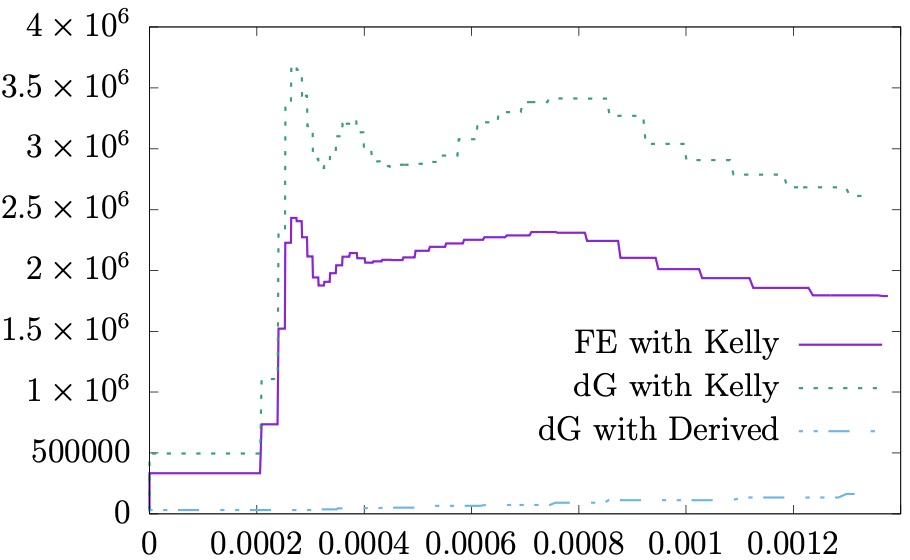}
	\caption{Degrees of Freedom (DoF) count (vertical axis) per timestep (horizontal axis), for the three combinations of discretisation and indicator.}\label{fig:ex4-fig3}
\end{figure}

Figures \ref{fig:ex4-fig2-FE_K}, \ref{fig:ex4-fig2-DG_K}, and \ref{fig:ex4-fig2-DG-D} compare  the outer surface of the meshes 
generated adaptively by the three methods. The Kelly indicator generates 
similar meshes in both the FE and dG case, while the derived indicator
admits more localised refinement, resulting in a less-refined mesh
overall. This is evident in the significant disparity between the mesh cardinalities shown in Figure~\ref{fig:ex4-fig3}.

\section{Conclusions}\label{sec:conclusions}

This work has been concerned with the  derivation of an \aposteriori{} error bound for the discontinuous Galerkin method  applied to convection-diffusion equations, in a modified norm, without the usual restrictions placed upon the divergence of the velocity field. The analysis is motivated by the need to handle convection-dominated problems
with positive divergence, such as when the convection field is obtained from a 
non divergence-free approximation. This bound
is subject to an exponential term in the event of non-negative divergence, as well as a non-standard Gr\"onwall argument. The error bound leads to an adaptivity indicator 
designed for the problem in question, enabling the adaptivity strategy to
be guided in a more rigorously supported fashion. Further work remains to understand the
full consequences of varying choices of parameter $\alpha$ in this bound, 
and to identify the exact circumstances under which this result improves 
on existing known bounds.

The scenario of convection-dominated problems
with positive divergence, is exemplified in the context of simulation of the Boussinesq system modelling Earth's mantle convection. There, the energy/temperature equation admits strong convection which is produced by a coupled Stokes equation. The Stokes system is solved using Taylor-Hood elements and may result to non-divergence-free or even positive velocities. The temperature equation is discretised via an interior penalty discontinuous Galerkin method. The new \aposteriori{} error estimators proven in the first part of the present work are used to drive dynamic adaptive mesh modification. The new adaptivity strategy based on the \aposteriori{} error 
estimator appears to give computational savings with no detriment to the observed convection patterns. We, thus, expect it to result in better approximation of full mantle 
simulations, compared to current approaches. 

\bibliographystyle{apalike}
\bibliography{bibliography}
\end{document}

on the unit box, illustrated in Figure \ref{fig:unit-flow}.
\tikzstyle{line} = [draw, -latex', color=blue]
\begin{figure}[]
\centering
\begin{tikzpicture}[>=latex,scale=10]
		\draw (0,0) -- (1,0);
		\draw (0,0) -- (0,1);
		\draw (1,0) -- (1,1);
		\draw (0,1) -- (1,1);
        \foreach \x in {0,0.5,1}
                {
                        \draw (\x,0.02) -- (\x,-0.02);
                }
        \foreach \y in {0,0.5,1}
                {
                        \draw (0.02,\y) -- (-0.02,\y);
                }
        \foreach \x in {0,0.1,...,1.1}
                \foreach \y in {0,0.1,...,1.1}
                        {
                                \pgfmathparse{0.5-\x-0.5*\y} \let\t\pgfmathresult
                                \path[line] (\x,\y) -- +({0.07*1},{0.07*\t});
                        }
\end{tikzpicture}
\caption[Flow field diagram for negative-divergence example]{Flow field diagram for the example of negative-divergence flow in a unit box.}\label{fig:unit-flow}
\end{figure}

 \begin{figure}
	\centering
	\begin{subfigure}[b]{.48\linewidth}
		\centering
		\includegraphics[height=0.33\textheight]{outer-mesh0030-ck.png}
		\caption{FE, Kelly indicator}
	\end{subfigure}
	\centering
	\begin{subfigure}[b]{.48\linewidth}
		\centering
		\includegraphics[height=0.33\textheight]{outer-mesh0030-dk.png}
		\caption{dG, Kelly indicator}
	\end{subfigure}
	\hspace{-5mm}
	\begin{subfigure}[b]{\linewidth}
		\centering
		\includegraphics[height=0.33\textheight]{outer-mesh0030-dd.png}
		\caption{dG, Derived indicator}
	\end{subfigure}
	\caption{Comparison of outer meshes generated by the three methodologies.}\label{fig:ex4-fig2}
\end{figure}